%% file: main.tex
\documentclass[10pt,reqno]{amsart}
\input{Preamble}

\title[Torsion parallel pure spinors on neutral manifolds]{Torsion parallel pure spinors on neutral manifolds}
\author[Alejandro Gil-García]{Alejandro Gil-García \orcidlink{0000-0002-9370-241X}}
\address{Scuola Internazionale Superiore di Studi Avanzati, Trieste, Italy}
\email{agilgarc@sissa.it}
\date{\today}

\begin{document}

\begin{abstract}
We study irreducible real pure spinors on pseudo-Riemannian manifolds of neutral signature using the theory of real spinorial forms. We prove that the square of such a spinor is a decomposable differential form of middle degree satisfying a natural duality condition. In signature $(4,4)$, we show that non-pure spinors correspond to $\mathrm{Spin}_0(4,3)$-structures, yielding an intrinsic algebraic characterization of these structures. In addition, we characterize real pure spinors parallel with respect to metric connections with torsion in terms of an equivalent differential system for their squares. As an application, we study left-invariant supersymmetric solutions of the NS-NS supergravity system on certain four-dimensional Lie groups.\bigskip

\noindent
\emph{Keywords: real pure spinors, torsion parallel spinors, real spinorial forms, NS-NS supergravity}\medskip

\noindent
\emph{MSC2020: Primary 53C27; Secondary 53C50, 15A66}

\end{abstract}

\maketitle
\setcounter{tocdepth}{1}
\tableofcontents


\section{Introduction}

In \cite[Rmk.\ 3.39]{Cortes_Lazaroiu_Shahbazi2021}, the authors ask whether the square of an irreducible real chiral spinor in neutral signature $(p,p)$ corresponds to an (anti-)self-dual $p$-form of zero norm. This question is one of the main motivations for the present article. We provide an answer to it and show, more precisely, that the square of an irreducible real chiral spinor in signature $(p,p)$ is a decomposable (anti-)self-dual $p$-form if and only if the spinor is \emph{pure}. Further motivation comes from \cite{GS25_G2s}, where irreducible real spinors in signature $(4,3)$ are studied in relation to $\mathrm{G}_2^*$ and isotropic structures. We show that an analogous phenomenon occurs in signature $(4,4)$, where irreducible real spinors are related to $\mathrm{Spin}_0(4,3)$ and isotropic structures. Both Lie groups $\mathrm{G}_2^*$ and $\mathrm{Spin}_0(4,3)$ appear in the list of possible holonomy groups of non-symmetric irreducible pseudo-Riemannian manifolds \cite{Ber55,Bryant1987} and they have been related to the existence of parallel spinors \cite{Baum_Kath1999}. We also study pure spinors parallel with respect to a general connection using the theory of real spinorial forms developed in \cite{Cortes_Lazaroiu_Shahbazi2021}. In particular, we investigate pure spinors parallel with respect to connections with torsion.\medskip

Pseudo-Riemannian manifolds of neutral signature carrying pure spinors parallel with respect to suitable connections are relevant in both mathematics and theoretical physics. In low dimensions, particularly in dimension four, they have been studied in connection with supersymmetric solutions of four-dimensional supergravity \cite{Gutowski_Sabra2019,Gutowski_Sabra2020,Klemm_Nozawa_2015}, null-Kähler structures \cite{Bryant2000,Dunajski2002,Dunajski_Przanowski2006}, and gravitational perturbations of vacuum space-times in neutral signature \cite{Araneda2023}. In arbitrary dimensions, real pure spinors on manifolds of neutral signature were studied by Kath \cite{Kath_2000,KathHabilitation}, who related them to optical structures and provided a normal form for a metric of neutral signature admitting a parallel real pure spinor. Moreover, the holonomy group of such a metric is isomorphic to the identity component of the stabilizer of a real pure spinor. More generally, in \cite{Ikemakhen2003} the author characterizes spin pseudo-Riemannian manifolds admitting parallel pure spinors in terms of their holonomy groups. Pure spinors also play an important role in generalized geometry, since they are closely related to Dirac structures and generalized complex structures on Courant algebroids \cite{Gualtieri2011,Cortes_David2021,GarciaFernandez_Streets2021}.\medskip

As an application of the results obtained in this paper, we study the following coupled spinorial differential system on a four-dimensional pseudo-Riemannian manifold $(M,g)$ of neutral signature \cite{Cortes_Lazaroiu_Shahbazi2021,ShahbaziThesis}: \begin{equation}\label{eq:KSE}
    \nabla^{S,H}\xi=0,\qquad\varphi\cdot\xi=H\cdot\xi.
\end{equation}

Here $\xi\in\Gamma(S)$ is a nowhere vanishing chiral section of a bundle of irreducible real Clifford modules $S$ on $(M,g)$, $\varphi\in\Omega^1(M)$ is a closed one-form, $H\in\Omega^3(M)$ is a closed three-form, $\nabla^{S,H}$ is a metric connection on $S$ with totally skew-symmetric torsion $H$, and \emph{dot} denotes Clifford multiplication. The spinorial system \eqref{eq:KSE} has its origin in the mathematical physics literature of supergravity and string theory \cite{Freedman_Proeyen2012,Becker_Schwarz2012,Ortin2015,Tomasiello2022} and it encodes the \emph{supersymmetric conditions} of \emph{NS-NS supergravity} evaluated on a four-dimensional bosonic configuration. The supersymmetry conditions are typically called the \emph{NS-NS Killing spinor equations} of the supergravity theory. As with any supergravity theory, the bosonic sector of NS-NS supergravity can be defined through its equations of motion, which we call the four-dimensional \emph{NS-NS supergravity system}. This system is defined by the following second-order partial differential equations \cite{Cortes_Lazaroiu_Shahbazi2021,ShahbaziThesis}: \begin{equation*}
    \mathrm{Ric}^g+\nabla^g\varphi-\tfrac{1}{2}H\circ H=0,\qquad\delta H+\iota_{\varphi^{\sharp}}H=0,\qquad\delta\varphi+\escal{\varphi,\varphi}-\escal{H,H}=0,
\end{equation*} where $\mathrm{Ric}^g$ is the Ricci curvature of $g$, $(H\circ H)(X,Y)\coloneqq\escal{\iota_XH,\iota_YH}$ for all $X,Y\in\Gamma(TM)$, $\delta$ is the codifferential operator, and $\escal{-,-}$ denotes the metric induced by $g$ on the space of differential forms. A solution $(M,g,\varphi,H)$ of the NS-NS supergravity system is \emph{supersymmetric} if and only if there exist a bundle of irreducible real Clifford modules $S$ on $(M,g)$ and a section $\xi\in\Gamma(S)$ such that $(M,g,\varphi,H,\xi)$ is a solution of the NS-NS Killing spinor equations \eqref{eq:KSE}. One of the most remarkable properties of the NS-NS Killing spinor equations is that they provide almost \emph{complete integrability} for the NS-NS supergravity system.\medskip

The formalism of spinorial forms is especially well suited to the study of spinorial systems such as \eqref{eq:KSE}, since it eliminates any explicit dependence on the choice of a bundle of irreducible real Clifford modules. In doing so, it avoids the complications associated with bundles that formally depend on the choice of an underlying metric. In this framework, spinors are equivalently described as differential forms on the manifold, and Clifford multiplication by forms is expressed as the geometric product of those forms with the spinorial form associated with the spinor. Moreover, this formalism makes it possible to bypass certain global assumptions on the underlying pseudo-Riemannian manifold, such as simple connectedness or completeness, since the differential equation for the spinor reduces to an intrinsic differential equation on the base manifold. This was illustrated in \cite{GS26_Sasakian} using the formalism of \emph{complex} spinorial forms developed in \cite{Gil-Garcia_Shahbazi2025,Gil-Garcia_Shahbazi2026}.\medskip

The paper is organized as follows. In Section \ref{sec:preliminaries}, we recall the theory of real spinorial forms developed in \cite{Cortes_Lazaroiu_Shahbazi2021}. In Section \ref{sec:neutral_square}, we study the square of an irreducible real chiral spinor in neutral signature $(p,p)$. We show that every chiral spinor is pure for $p=1,2,3$, while in signature $(4,4)$ a chiral spinor is pure if and only if it is isotropic. We also prove that non-isotropic chiral spinors give rise to $\mathrm{Spin}_0(4,3)$-structures. In Theorem \ref{thm:square_(p,p)}, we compute the square of an irreducible real pure spinor in arbitrary dimension, showing that it corresponds to a decomposable (anti-)self-dual $p$-form. Along the way, in Proposition \ref{prop:phi.xi=H.xi}, we obtain a reformulation in terms of exterior forms of the algebraic constraints appearing in the NS-NS Killing spinor equations. In Section \ref{sec:parallel_square}, we globalize the construction of real spinorial forms to bundles of irreducible real Clifford modules over pseudo-Riemannian manifolds of neutral signature. We establish the global counterpart of Theorem \ref{thm:square_(p,p)} in Theorem \ref{thm:global_square_(p,p)}, and prove in Theorem \ref{thm:torsion_parallel} a characterization of irreducible real pure spinors which are parallel with respect to a connection with torsion, without assuming simple connectedness. Finally, in the last subsection, we construct examples of left-invariant irreducible real chiral spinors on four-dimensional Lie groups satisfying the NS-NS Killing spinor equations in Theorem \ref{thm:susy_(2,2)}, as well as examples which also satisfy the NS-NS supergravity system in Theorems \ref{thm:EOM_(2,2)_unimodular} and \ref{thm:EOM_(2,2)_non_unimodular}.

\begin{acknowledgements}
    I am indebted to C.\ S.\ Shahbazi for many useful and illuminating discussions and for introducing me to the fascinating world of spinorial forms. This work is supported by the Scuola Internazionale Superiore di Studi Avanzati (SISSA) and was previously funded by the Beijing Institute of Mathematical Sciences and Applications (BIMSA).
\end{acknowledgements}


\section{Preliminaries on real spinorial forms}\label{sec:preliminaries}

In this section, we introduce the square maps for irreducible real spinors in neutral signatures and characterize their image in terms of a system of algebraic equations together with a key inequality. These quadratic maps are valued in the exterior algebra of the underlying quadratic vector space and define our notion of the square of a spinor. This section is preliminary and is mainly based on \cite{Cortes_Lazaroiu_Shahbazi2021}. Throughout the paper we will use $\Z_2=\{\pm1\}$.

\subsection{Admissible bilinear pairings for irreducible real Clifford modules}

Let $V$ be an oriented $d$-dimensional real vector space equipped with a non-degenerate metric $h$ of neutral signature $(p,p)$ and let $(V^*,h^*)$ be the quadratic space dual to $(V,h)$, where $h^*$ denotes the metric dual to $h$. Let $\mathrm{Cl}(V^*,h^*)$ be the real Clifford algebra of the quadratic vector space $(V^*,h^*)$. In our conventions, the Clifford algebra satisfies: $$\theta^2=h^*(\theta,\theta)$$ for all $\theta\in V^*$. Let $\pi$ denote the \emph{parity automorphism} of the real Clifford algebra $\mathrm{Cl}(V^*,h^*)$, which acts as minus the identity on $V^*\subset\mathrm{Cl}(V^*,h^*)$, and let $\tau$ denote the \emph{reversion anti-automorphism}, which acts as the identity on $V^*\subset\mathrm{Cl}(V^*,h^*)$.\medskip

In neutral signatures, the Clifford algebra $\mathrm{Cl}(V^*,h^*)$ is simple and isomorphic (as a unital and associative real algebra) to the algebra of real square matrices of size $2^{\smash{\frac{d}{2}}}$, that is: $$\mathrm{Cl}(V^*,h^*)\cong\Mat(2^{\frac{d}{2}},\R).$$

In such signatures, $\mathrm{Cl}(V^*,h^*)$ admits a unique irreducible real left module $\Sigma$, which has dimension $2^{\smash{\frac{d}{2}}}$. This irreducible representation is faithful and surjective, hence the representation map: $$\gamma\colon\mathrm{Cl}(V^*,h^*)\to\End(\Sigma)$$ is an isomorphism of unital and associative real algebras.\medskip

In order to construct the square of a spinor, we will use the \emph{signed square maps}: $$\mathcal{E}^\kappa\colon\Sigma\to\End(\Sigma),\qquad\xi\mapsto\kappa\,\xi\otimes\mathscr{B}(-,\xi),\qquad\kappa\in\Z_2$$ as quadratic maps associated to a certain choice of non-degenerate bilinear pairing $\mathscr{B}\colon\Sigma\times\Sigma\to\R$, which we assume to be either symmetric or skew-symmetric. We say that $\mathscr{B}$ has \emph{symmetry type} $\sigma\in\Z_2$ if: $$\mathscr{B}(\xi_1,\xi_2)=\sigma\mathscr{B}(\xi_2,\xi_1)$$ for all $\xi_1,\xi_2\in\Sigma$. Thus $\mathscr{B}$ is symmetric if it has symmetry type $\sigma=+1$ and skew-symmetric if it has symmetry type $\sigma=-1$.\medskip

For our purposes, we cannot use an arbitrary choice of non-degenerate bilinear pairing on $\Sigma$. Instead, it is convenient to work with non-degenerate bilinear pairings on $\Sigma$ that are adapted to its structure as a real Clifford module. This leads to the notion of \emph{admissible bilinear pairings}, introduced in \cite{Alekseevsky_Cortes1997,ACDVP2005}, which encodes the best compatibility conditions with Clifford multiplication that can be imposed on a bilinear pairing on $\Sigma$ in arbitrary dimension and signature.

\begin{definition}
    Let $(\Sigma,\gamma)$ be an irreducible real Clifford module. An \emph{admissible bilinear pairing} on $(\Sigma,\gamma)$ is a non-degenerate bilinear pairing $\mathscr{B}$ on $\Sigma$ satisfying: \begin{equation*}\label{eq:admissible_pairing}
        \mathscr{B}(\gamma(x)\xi_1,\xi_2)=\mathscr{B}(\xi_1,\gamma((\pi^{\frac{1-s}{2}}\circ\tau)(x))\xi_2)
    \end{equation*} for all $x\in\mathrm{Cl}(V^*,h^*)$ and $\xi_1,\xi_2\in\Sigma$. The sign factor $s\in\Z_2$ is called the \emph{adjoint type} of the admissible bilinear pairing. We say that the admissible bilinear pairing is of \emph{positive adjoint type} if $s=+1$ and of \emph{negative adjoint type} if $s=-1$.
\end{definition}

A proof of the existence of admissible bilinear pairings on every irreducible real Clifford module $(\Sigma,\gamma)$ can be found in \cite{Alekseevsky_Cortes1997,ACDVP2005,Harvey1990,Meinrenken2013}. We state the following result taken from \cite{Cortes_Lazaroiu_Shahbazi2021}.

\begin{proposition}[{\cite[Thm.\ 3.1]{Cortes_Lazaroiu_Shahbazi2021}}]\label{prop:admissible_bilinear_pairings}
    Let $(\Sigma,\gamma)$ be an irreducible real Clifford module. Then it admits two admissible bilinear pairings $\mathscr{B}_\pm\colon\Sigma\times\Sigma\to\R$ satisfying: $$\mathscr{B}_+(\gamma(x)\xi_1,\xi_2)=\mathscr{B}_+(\xi_1,\gamma(\tau(x))\xi_2),\qquad\mathscr{B}_-(\gamma(x)\xi_1,\xi_2)=\mathscr{B}_-(\xi_1,\gamma((\pi\circ\tau)(x))\xi_2)$$ for all $x\in\mathrm{Cl}(V^*,h^*)$ and $\xi_1,\xi_2\in\Sigma$. The symmetry properties of $\mathscr{B}_+$ and $\mathscr{B}_-$ are as follows in terms of the modulo $4$ reduction of $p=\frac{d}{2}$:

    \begin{center}
    \begin{tabular}{lllll}
    \hline
    $p\textnormal{ mod }4$&$0$&$1$&$2$&$3$\\
    \hline
    $\mathscr{B}_+$&\textnormal{Symmetric}&\textnormal{Symmetric}&\textnormal{Skew-symmetric}&\textnormal{Skew-symmetric}\\
    \hline
    $\mathscr{B}_-$&\textnormal{Symmetric}&\textnormal{Skew-symmetric}&\textnormal{Skew-symmetric}&\textnormal{Symmetric}\\
    \hline
    \end{tabular}
    \end{center}
\end{proposition}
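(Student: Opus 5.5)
Since $\gamma\colon\mathrm{Cl}(V^*,h^*)\to\End(\Sigma)$ is an isomorphism, I will convert anti-automorphisms of the Clifford algebra into non-degenerate bilinear forms. Fix any non-degenerate symmetric bilinear form $\mathscr{B}_0$ on $\Sigma$ and let $T\mapsto T^t$ denote its adjoint, which is an anti-automorphism of $\End(\Sigma)$. For $\alpha\in\{\tau,\pi\circ\tau\}$, the map $x\mapsto\gamma^{-1}(\gamma(\alpha(x))^t)$ is an automorphism of the central simple algebra $\mathrm{Cl}(V^*,h^*)\cong\Mat(2^p,\R)$. By Skolem--Noether it is inner, so there is an invertible $C_\alpha\in\End(\Sigma)$ with $\gamma(\alpha(x))=C_\alpha^{-1}\gamma(x)^tC_\alpha$. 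Setting $\mathscr{B}_\alpha(\xi_1,\xi_2)\coloneqq\mathscr{B}_0(\xi_1,C_\alpha\xi_2)$ gives a non-degenerate form satisfying $\mathscr{B}_\alpha(\gamma(x)\xi_1,\xi_2)=\mathscr{B}_\alpha(\xi_1,\gamma(\alpha(x))\xi_2)$. This is exactly the admissibility condition with $s=+1$ for $\alpha=\tau$ and $s=-1$ for $\alpha=\pi\circ\tau$.

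Next I show each $\mathscr{B}_\alpha$ is symmetric or skew-symmetric. Since $\alpha$ is an involution, the form $\mathscr{B}'(\xi_1,\xi_2)\coloneqq\mathscr{B}_\alpha(\xi_2,\xi_1)$ satisfies the same intertwining relation. The form $\mathscr{B}'$ is obtained from $\mathscr{B}_\alpha$ by an invertible endomorphism commuting with $\gamma(\mathrm{Cl})=\End(\Sigma)$, and by Schur such an endomorphism is a real scalar. Hence $\mathscr{B}'=\lambda\mathscr{B}_\alpha$, and applying the swap twice gives $\lambda^2=1$. This also shows the admissible pairing of each adjoint type is unique up to scale.

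The main step is computing the sign $\sigma$ in terms of $p\bmod 4$. I will do it by counting dimensions. The space $\End(\Sigma)\cong\Sigma\otimes\Sigma^*$ splits into forms that are symmetric or skew with respect to the transpose defined by $\mathscr{B}_\alpha$. Symmetric $\mathscr{B}_\alpha$ gives a $\frac{N(N+1)}{2}$-dimensional space of self-adjoint operators, where $N=2^p$. Skew $\mathscr{B}_\alpha$ gives $\frac{N(N-1)}{2}$ instead. On the other hand, $\mathscr{B}_\alpha$-self-adjointness of $\gamma(x)$ means $\alpha(x)=x$. Under the isomorphism $\mathrm{Cl}\cong\Lambda V^*$, $\tau$ acts on $\Lambda^k$ by $(-1)^{k(k-1)/2}$ and $\pi\circ\tau$ by $(-1)^{k(k+1)/2}$. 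The fixed space therefore has dimension $\sum_{k\equiv0,1(4)}\binom{2p}{k}$ for $\tau$ and $\sum_{k\equiv0,3(4)}\binom{2p}{k}$ for $\pi\circ\tau$.

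I will evaluate these with the standard roots-of-unity filter via $(1\pm i)^{2p}=(\pm 2i)^p$. This gives $2^{2p-1}\pm 2^{p-1}\,\mathrm{Re}$ or $\mathrm{Im}$ of $i^{\pm p}$, combined appropriately. Comparing with $2^{2p-1}\pm 2^{p-1}$ reads off $\sigma$ for each $p\bmod 4$. I expect the outcome to be symmetric, symmetric, skew, skew for $\mathscr{B}_+$ and symmetric, skew, skew, symmetric for $\mathscr{B}_-$, matching the table. The main obstacle is the sign bookkeeping in this trigonometric sum. I would check it against $p=1$, where $\mathrm{Cl}(1,1)\cong\Mat(2,\R)$, $\tau$ fixes $\Lambda^0\oplus\Lambda^1$ of dimension $3$ (so $\mathscr{B}_+$ is symmetric), and $\pi\circ\tau$ fixes $\Lambda^0$ only, of dimension $1$ (so $\mathscr{B}_-$ is skew).
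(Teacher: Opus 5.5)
Your proof is correct, but it does not follow the paper, because the paper gives no argument here. It imports the statement from \cite[Thm.\ 3.1]{Cortes_Lazaroiu_Shahbazi2021}, which rests on the classification of admissible pairings in \cite{Alekseevsky_Cortes1997,ACDVP2005}; that classification covers all signatures and spinor modules of real, complex and quaternionic type.

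Your argument is self-contained:
\begin{itemize}
\item Skolem--Noether gives existence.
\item Only scalars commute with $\gamma(\mathrm{Cl}(V^*,h^*))=\End(\Sigma)$, and this forces $\sigma=\pm1$. It is the same commutant argument the paper uses in the remark after the proposition to get uniqueness up to scale, here applied to $\mathscr{B}_\alpha$ and its transpose.
\item The sign is read off by comparing $N(N\pm1)/2$ with $\dim\mathrm{Fix}(\alpha)$. This comparison is legitimate precisely because $\gamma$ is bijective.
\end{itemize}

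You can drop the hedge ``I expect''. The roots-of-unity filter gives $\dim\mathrm{Fix}(\tau)=2^{2p-1}+2^{p-1}\bigl(\cos\tfrac{p\pi}{2}+\sin\tfrac{p\pi}{2}\bigr)$ and $\dim\mathrm{Fix}(\pi\circ\tau)=2^{2p-1}+2^{p-1}\bigl(\cos\tfrac{p\pi}{2}-\sin\tfrac{p\pi}{2}\bigr)$. For $p\equiv0,1,2,3 \pmod 4$ the brackets equal $1,1,-1,-1$ and $1,-1,-1,1$ respectively, which is exactly the table.

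Your route buys brevity, uniqueness up to scale at no extra cost, and transparency: the answer visibly depends only on $d=2p$. It gives up generality. Outside the case $\mathrm{Cl}(V^*,h^*)\cong\Mat(2^{d/2},\R)$, the self-adjoint operators no longer correspond to $\mathrm{Fix}(\alpha)$, and the count must be redone inside the commutant. That is the extra work the cited classification handles, but this paper works only in neutral signature and never needs it.
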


\begin{remark}
    Let $\mathscr{B}_1$ and $\mathscr{B}_2$ be two admissible bilinear pairings on $(\Sigma,\gamma)$ of the same adjoint type. Since both are non-degenerate, there exists an invertible endomorphism $A\in\End(\Sigma)$ such that $\mathscr{B}_2(\xi_1,\xi_2)=\mathscr{B}_1(A\xi_1,\xi_2)$ for all $\xi_1,\xi_2\in\Sigma$. Using the admissibility property, we obtain $A\gamma(\theta)=\gamma(\theta)A$ for all $\theta\in V^*$. Since in neutral signatures we have that $\gamma(V^*)$ generates $\End(\Sigma)$ as a real algebra, we then conclude that $A$ commutes with all real endomorphisms of $\Sigma$, which implies that $A=a\,\Id$ for some non-zero constant $a\in\R\setminus\{0\}$. Therefore, admissible bilinear pairings of a fixed adjoint type are unique up to scaling.
\end{remark}

\subsection{The Kähler-Atiyah model for the Clifford algebra}

To identify spinors with exterior forms we use an explicit realization of the Clifford algebra $\mathrm{Cl}(V^*,h^*)$ as a deformation of the exterior algebra $\Lambda V^*$, an idea that goes back to the work of Chevalley and Riesz \cite{Chevalley1954,Chevalley1955,Riesz1993}. Therefore, we identify the real Clifford algebra $\mathrm{Cl}(V^*,h^*)$ with the \emph{Kähler-Atiyah algebra} of $(V^*,h^*)$, which we denote by $(\Lambda V^*,\diamond)$. The bilinear map $\diamond\colon\Lambda V^*\times\Lambda V^*\to\Lambda V^*$ denotes the \emph{geometric product} determined by $h$. This is given by the linear and associative extension of the following expression: \begin{equation*}\label{eq:def_geometric_product}
    \theta\diamond\alpha=\theta\wedge\alpha+\iota_{\theta^\sharp}\alpha,
\end{equation*} where $\theta\in V^*$, $\alpha\in\Lambda V^*$, and $\theta^\sharp\in V$ denotes the $h$-dual vector of the one-form $\theta$. In order to do computations with the geometric product it is convenient to introduce the \emph{generalized products} of $(V^*,h^*)$. These are the bilinear operators: $$\triangle_k\colon\Lambda^a V^*\times\Lambda^b V^*\to\Lambda^{a+b-2k}V^*,\qquad k=0,\ldots,d,$$ defined through the expansion: $$\alpha\diamond\beta=\sum_{k=0}^d(-1)^{\binom{k+1}{2}+ak}\alpha\triangle_k\beta,$$ where $\alpha\in\Lambda^aV^*$ and $\beta\in\Lambda V^*$. Choosing a basis $\{e_1,\ldots,e_d\}$ of $V$ we can express the generalized products as: $$\alpha\triangle_k\beta=\tfrac{1}{k!}h^{i_1j_1}\cdots h^{i_kj_k}(\iota_{e_{i_1}}\ldots\iota_{e_{i_k}}\alpha)\wedge(\iota_{e_{j_1}}\ldots\iota_{e_{j_k}}\beta).$$

We collect some useful properties of the generalized products that we will use in the computations.

\begin{proposition}\label{prop:properties_triangles}
    Let $\alpha\in\Lambda^aV^*$ and $\beta\in\Lambda^bV^*$. Then: \begin{itemize}
        \item $\alpha\triangle_k\beta=0$ if $k>a$ or $k>b$.
        \item $\alpha\triangle_k\beta=(-1)^{(a-k)(b-k)}\beta\triangle_k\alpha$. In particular $\alpha\triangle_k\alpha=0$ if $a-k$ is odd.
        \item $\alpha\triangle_0\beta=\alpha\wedge\beta$ and $\alpha\triangle_a\beta=\escal{\alpha,\beta}$ if $b=a$.
        \item $\alpha\triangle_a(*\beta)=*(\beta\wedge\alpha)$ if $a+b\leq d$.
    \end{itemize}
\end{proposition}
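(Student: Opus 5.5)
The four properties all follow from the explicit basis formula
$$\alpha\triangle_k\beta=\tfrac{1}{k!}h^{i_1j_1}\cdots h^{i_kj_k}(\iota_{e_{i_1}}\cdots\iota_{e_{i_k}}\alpha)\wedge(\iota_{e_{j_1}}\cdots\iota_{e_{j_k}}\beta).$$
To simplify the computation, fix an $h$-orthonormal basis $\{e_i\}$ with $h(e_i,e_i)=\epsilon_i=\pm1$, so that $h^{ij}=\epsilon_i\delta^{ij}$. It is enough to verify each statement on decomposable monomials $e^I$, $e^J$ and extend by bilinearity.

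\emph{First item.} If $k>a$, then $\iota_{e_{i_1}}\cdots\iota_{e_{i_k}}\alpha=0$, since contracting an $a$-form $k>a$ times gives zero. The case $k>b$ is the same with $\beta$.

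\emph{Second item.} Swap the roles of the index sets $(i)$ and $(j)$; this is allowed because $h^{ij}$ is symmetric. The two contracted factors then have degrees $a-k$ and $b-k$, so commuting them under the wedge product produces the sign $(-1)^{(a-k)(b-k)}$. When $\beta=\alpha$ and $a-k$ is odd, the identity reads $\alpha\triangle_k\alpha=-\alpha\triangle_k\alpha$, hence $\alpha\triangle_k\alpha=0$.

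\emph{Third item.} For $k=0$ no contractions occur, so $\alpha\triangle_0\beta=\alpha\wedge\beta$. For $k=a=b$ both factors are scalars, and the formula becomes
$$\tfrac1{a!}\,\alpha_{i_1\dots i_a}\beta^{i_1\dots i_a},$$
up to the sign coming from the order of the contractions. Contracting both forms in the same order produces the same ordering sign on each side, so these signs cancel. What remains is the standard induced inner product $\escal{\alpha,\beta}$, with the normalization in which $\escal{e^I,e^I}=\epsilon_I$.

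\emph{Fourth item.} This is the main obstacle, because it depends on the chosen sign convention for the Hodge star. I would take the convention $\beta\wedge *\gamma=\escal{\beta,\gamma}\,\nu$ and check the identity on monomials $\alpha=e^I$, $\beta=e^J$ with $|I|=a$, $|J|=b$ and $a+b\le d$.

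If $I\cap J\neq\emptyset$, then $\beta\wedge\alpha=0$. On the other side, $*e^J$ is proportional to $e^{J^c}$. Since $a$ contractions by $e_I$ only survive if $I\subset J^c$, the left-hand side vanishes as well.

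If $I\cap J=\emptyset$, write $*e^J=\pm e^{J^c}$ and compute both sides:
\begin{itemize}
\item[] Left-hand side: $\tfrac1{a!}$ times the sum over orderings of $I$ gives a single term, namely $\epsilon_I\,\iota_{e_I}e^{J^c}$ with the matching sign.
\item[] Right-hand side: $*(e^J\wedge e^I)=\pm e^{(J\cup I)^c}$.
\end{itemize}
Both sides are multiples of the same monomial $e^{J^c\setminus I}$. It therefore remains to compare the sign conventions: the orientation sign of $e^J\wedge e^{J^c}$ against that of $e^{J}\wedge e^{I}\wedge e^{(J\cup I)^c}$, together with the metric factor $\epsilon_I$, which appears on both sides through the definition of $*$.

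I would settle the final sign bookkeeping by moving $e^I$ to the front of $e^{J^c}$ so that $\iota_{e_I}$ acts cleanly; the induced permutation signs then coincide. If the convention differs, the identity holds up to a global sign fixed by the convention the paper adopts for $*$.
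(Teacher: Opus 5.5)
\paragraph{Review of the proposal.} Your arguments for the first three items are correct. The paper quotes these properties without proof, so I am judging the proposal on its own.

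\paragraph{The gap in the fourth item.} It sits exactly where that item has content. That both sides are multiples of $e^K$, $K=J^c\setminus I$, is immediate, so the statement \emph{is} the equality of the two signs. You assert this equality (``the induced permutation signs then coincide'') without computing it.

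Your fallback is also wrong. Take the other common convention $\star\gamma\wedge\beta=\escal{\beta,\gamma}\nu$, i.e.\ $\star=(-1)^{k(d-k)}*$ on $k$-forms. Under it the two sides differ by $(-1)^{b(d-b)+(a+b)(d-a-b)}=(-1)^a$ (as $d$ is even), which is a degree-dependent sign, not a global one. So the convention must be fixed. Yours is the right one: Proposition~\ref{prop:product_volume_form} with $\alpha=\theta$ gives $*\theta=\theta\diamond\nu=\iota_{\theta^\sharp}\nu$.

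The point where an error can genuinely occur is the order of the contractions, which your ``$\iota_{e_I}$'' leaves unspecified. Because $\iota_{e_{i_1}}\cdots\iota_{e_{i_a}}\alpha=\alpha(e_{i_a},\ldots,e_{i_1})$, all $a!$ terms of the defining sum coincide for an orthonormal monomial. By multilinearity, $\alpha\triangle_a\gamma=\iota_{\theta_a^\sharp}\cdots\iota_{\theta_1^\sharp}\gamma$ for $\alpha=\theta_1\wedge\cdots\wedge\theta_a$, with $\theta_1$ contracted \emph{first}. Unlike in the third item, the reversal sign $(-1)^{\binom{a}{2}}$ is not cancelled by a matching sign on the other factor; it is absorbed into this ordering. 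Contracting in the opposite order produces a spurious factor $(-1)^{\binom{a}{2}}$. That factor equals $-1$ for $a=2,3$, which are exactly the cases ($F\triangle_2\Phi$, $v^\flat\triangle_2\Phi$, $H\triangle_3\alpha$) the paper uses later.

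\paragraph{How to close the gap.} Prove the case $a=1$ from the convention and iterate.
\begin{itemize}
\item For $\eta\in\Lambda^{b+1}V^*$ we have $\eta\wedge*\beta=0$ for degree reasons. Applying the antiderivation $\iota_{\theta^\sharp}$ gives $\eta\wedge\iota_{\theta^\sharp}(*\beta)=(-1)^b(\iota_{\theta^\sharp}\eta)\wedge*\beta=(-1)^b\escal{\eta,\theta\wedge\beta}\nu=\escal{\eta,\beta\wedge\theta}\nu$.
\item By non-degeneracy of the wedge pairing, $\iota_{\theta^\sharp}(*\beta)=*(\beta\wedge\theta)$.
\item Iterating, $\alpha\triangle_a(*\beta)=\iota_{\theta_a^\sharp}\cdots\iota_{\theta_1^\sharp}(*\beta)=*(\beta\wedge\theta_1\wedge\cdots\wedge\theta_a)=*(\beta\wedge\alpha)$. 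The order matches because $\theta_1$, contracted first, ends up next to $\beta$.
\end{itemize}
Equivalently, you can finish your monomial computation. Write $e^J\wedge e^{J^c}=s\,\nu$ and $e^{J^c}=s'\,e^I\wedge e^K$, with $e^I=e^{i_1}\wedge\cdots\wedge e^{i_a}$. Then $*e^J=\epsilon_Js\,e^{J^c}$ and $e^J\wedge e^I\wedge e^K=ss'\,\nu$. Both sides come out as $\epsilon_I\epsilon_J ss'\,e^K$, provided the contractions on the left are taken in the order $i_1$ first, as described above.
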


\begin{remark}
    From now on, we denote by $\escal{-,-}$ the metric induced by $h^*$ on the space of exterior forms $\Lambda V^*$. We will also write $\escal{\theta,\theta}=h^*(\theta,\theta)$ for $\theta\in V^*$.
\end{remark}

As a unital and associative algebra, the Kähler-Atiyah algebra $(\Lambda V^*,\diamond)$ is isomorphic to the Clifford algebra $\mathrm{Cl}(V^*,h^*)$ through the $h$-dependent \emph{Chevalley-Riesz isomorphism}, see \cite{Cortes_Lazaroiu_Shahbazi2021,Lazaroiu_Babalic_Coman2013,Lazaroiu_Babalic_Coman2016}, which we denote by: $$\Psi\colon(\Lambda V^*,\diamond)\to\mathrm{Cl}(V^*,h^*).$$

The maps $\pi$ and $\tau$ defined on $\mathrm{Cl}(V^*,h^*)$ transfer through $\Psi$ to the Kähler-Atiyah algebra, producing unital (anti-)automorphisms of the latter which we denote by the same symbols. With this notation, we have $\pi\circ\Psi=\Psi\circ\pi$ and $\tau\circ\Psi=\Psi\circ\tau$. Therefore, for every $\alpha\in\Lambda^aV^*$ we have: $$\pi(\alpha)=(-1)^a\alpha,\qquad\tau(\alpha)=(-1)^{\binom{a}{2}}\alpha.$$

Note that $\tau\circ\pi=\pi\circ\tau$ and $(\pi\circ\tau)(\alpha)=(-1)^{\binom{a+1}{2}}\alpha$.\medskip

Let $\nu\in\Lambda^dV^*$ be the pseudo-Riemannian volume form on $(V,h)$. Then we have $\nu\diamond\nu=1$ and $\nu\diamond\alpha=\pi(\alpha)\diamond\nu$ for all $\alpha\in\Lambda V^*$. Moreover, the volume form $\nu$ satisfies the following properties.

\begin{proposition}[{\cite[Lemma 3.24]{Cortes_Lazaroiu_Shahbazi2021}}]\label{prop:product_volume_form}
    Let $(V,h)$ be an even-dimensional quadratic vector space. Then: $$\alpha\diamond\nu=*\tau(\alpha),\qquad \nu\diamond\alpha=*(\pi\circ\tau)(\alpha).$$
\end{proposition}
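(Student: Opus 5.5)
The plan is to reduce everything to the generalized products. By linearity it suffices to prove both identities for a homogeneous form $\alpha\in\Lambda^aV^*$. The second identity will then follow from the first using the relation $\nu\diamond\alpha=\pi(\alpha)\diamond\nu$ recalled just before the statement.

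For the first identity, expand the geometric product with the defining formula
$$\alpha\diamond\nu=\sum_{k=0}^d(-1)^{\binom{k+1}{2}+ak}\,\alpha\triangle_k\nu .$$
For $k<a$, the form $\alpha\triangle_k\nu$ has degree $a+d-2k>d$, so it vanishes. By Proposition \ref{prop:properties_triangles}, $\alpha\triangle_k\nu=0$ for $k>a$. Hence only the term $k=a$ survives, and
$$\alpha\diamond\nu=(-1)^{\binom{a+1}{2}+a^2}\,\alpha\triangle_a\nu .$$

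To evaluate $\alpha\triangle_a\nu$, I will write $\nu=*1$ and use the last item of Proposition \ref{prop:properties_triangles} with $\beta=1$ and $b=0$. The hypothesis $a+b\le d$ holds trivially, and the item gives $\alpha\triangle_a(*1)=*(1\wedge\alpha)=*\alpha$. For the sign, since $a^2\equiv a$ modulo $2$, we get
$$\binom{a+1}{2}+a^2\equiv\binom{a}{2}+2a\equiv\binom{a}{2}\pmod 2 .$$
Therefore $\alpha\diamond\nu=(-1)^{\binom{a}{2}}*\alpha=*\tau(\alpha)$, as claimed.

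For the second identity, use $\nu\diamond\alpha=\pi(\alpha)\diamond\nu=(-1)^a\,\alpha\diamond\nu$, which holds in even dimension. Combining with the first identity gives $(-1)^{a+\binom{a}{2}}*\alpha$. Since $a+\binom{a}{2}=\binom{a+1}{2}$, this equals $*(\pi\circ\tau)(\alpha)$.

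The main obstacle I expect is a convention check rather than a computation. One must verify that, with the paper's Hodge star and orientation, $*1=\nu$ exactly and not $\pm\nu$ (a signature-dependent sign can appear in pseudo-Riemannian signature). One must also verify that the last item of Proposition \ref{prop:properties_triangles} is stated with the same convention. If there is any doubt, I would check the case $a=1$ directly in an orthonormal basis. There, $e^1\diamond\nu=\iota_{(e^1)^\sharp}\nu$ because $e^1\wedge\nu=0$, and I would compare this contraction with $*e^1$. This fixes the convention, and the general case then follows from the sign bookkeeping above.
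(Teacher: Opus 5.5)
Your proof is correct. The paper does not prove this statement: it imports it from \cite[Lemma 3.24]{Cortes_Lazaroiu_Shahbazi2021}. So your argument is a self-contained derivation from facts the paper recalls beforehand, not a variant of an in-paper proof.

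The reduction is sound. For $\alpha\in\Lambda^aV^*$ only $k=a$ survives in the expansion of $\alpha\diamond\nu$, and the sign $(-1)^{\binom{a+1}{2}+a^2}=(-1)^{\binom{a}{2}}$ is right. The second identity follows from $\nu\diamond\alpha=\pi(\alpha)\diamond\nu$, which holds in any even dimension because $\nu$ anticommutes with one-forms, together with $a+\binom{a}{2}=\binom{a+1}{2}$.

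Be aware that all the geometric content sits in the last item of Proposition~\ref{prop:properties_triangles}. With $\beta=1$ that item is literally the statement $\alpha\triangle_a\nu=*\alpha$. Your argument is therefore a reduction to that cited item, not an independent computation.

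Your convention worry is harmless. With the convention $\beta\wedge*\gamma=\escal{\beta,\gamma}\nu$ one has $*1=\nu$ in every signature; the signature-dependent sign only appears in $*\nu=\escal{\nu,\nu}$. If you want to avoid relying on the cited item, your $a=1$ basis check extends verbatim to a monomial $e^1\wedge\cdots\wedge e^a$ in an orthonormal basis. There both $\alpha\triangle_a\nu$ and $*\alpha$ equal $\varepsilon_1\cdots\varepsilon_a\,e^{a+1}\wedge\cdots\wedge e^{d}$, with $\varepsilon_i=h^*(e^i,e^i)$.

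The result is also consistent with how the paper uses these identities. For example, $*\theta=\theta\diamond\nu=-\nu\diamond\theta$ in the proof of Proposition~\ref{prop:square_(1,1)}.
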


In neutral signatures, composing the Chevalley-Riesz isomorphism $\Psi\colon(\Lambda V^*,\diamond)\to\mathrm{Cl}(V^*,h^*)$ with the irreducible representation $\gamma\colon\mathrm{Cl}(V^*,h^*)\to\End(\Sigma)$, which in such signatures is a unital isomorphism of algebras, gives an isomorphism of unital and associative real algebras: $$\Psi_\gamma\coloneqq\gamma\circ\Psi\colon(\Lambda V^*,\diamond)\to(\End(\Sigma),\circ).$$

\subsection{The signed spinor square maps}

To define the signed spinor square maps we combine the isomorphism of unital and associative algebras $\Psi_\gamma\colon(\Lambda V^*,\diamond)\to\End(\Sigma)$ introduced before together with the signed square maps $\mathcal{E}^\kappa\colon\Sigma\to\End(\Sigma)$, $\kappa\in\Z_2$.

\begin{definition}
    Let $(\Sigma,\gamma)$ be an irreducible real Clifford module for $(V^*,h^*)$ and let $\mathscr{B}$ be an admissible bilinear pairing. The \emph{signed spinor square maps} of $(\Sigma,\gamma,\mathscr{B})$ are the quadratic maps defined as follows: $$\mathcal{E}^\kappa_\gamma\coloneqq\Psi_\gamma^{-1}\circ\mathcal{E}^\kappa\colon\Sigma\to\Lambda V^*.$$
    
    We will say that an exterior form $\alpha\in\Lambda V^*$ is the \emph{signed square} of a spinor $\xi\in\Sigma$ if $\alpha=\mathcal{E}^\kappa_\gamma(\xi)$ for some $\kappa\in\Z_2$, and we will refer generically to elements in $\Im(\mathcal{E}^\kappa_\gamma)\subset\Lambda V^*$ as \emph{spinorial forms}.
\end{definition}

Summarizing, we have the following diagram: $$\begin{tikzcd}
\Sigma \arrow[rr, "\mathcal{E}^\kappa"] \arrow[dd, "\mathcal{E}^\kappa_\gamma"'] &  & \End(\Sigma)                                 \\
                                                                                 &  &                                              \\
(\Lambda V^*,\diamond) \arrow[rr, "\Psi"] \arrow[rruu, "\Psi_\gamma"]                       &  & {\mathrm{Cl}(V^*,h^*)} \arrow[uu, "\gamma"']
\end{tikzcd}$$

We are ready to give the algebraic characterization of spinors in terms of exterior forms.

\begin{theorem}[{\cite[Thm.\ 3.20]{Cortes_Lazaroiu_Shahbazi2021}}]\label{thm:characterization_spinorial_forms}
    Let $(\Sigma,\gamma)$ be an irreducible real Clifford module for $(V^*,h^*)$ and let $\mathscr{B}$ be an admissible bilinear pairing of symmetry type $\sigma\in\Z_2$ and adjoint type $s\in\Z_2$. Then the following statements are equivalent for an exterior form $\alpha\in\Lambda V^*$: \begin{enumerate}
        \item $\alpha$ is the signed square of a spinor $\xi\in\Sigma$. That is, $\alpha=\mathcal{E}^\kappa_\gamma(\xi)$ for some $\kappa\in\Z_2$.
        \item $\alpha$ satisfies the following equations: $$\alpha\diamond\beta\diamond\alpha=2^{\frac{d}{2}}(\alpha\diamond\beta)^{(0)}\alpha,\qquad(\pi^{\frac{1-s}{2}}\circ\tau)(\alpha)=\sigma\alpha$$ for every exterior form $\beta\in\Lambda V^*$.
        \item $\alpha$ satisfies the following equations: $$\alpha\diamond\alpha=2^{\frac{d}{2}}\alpha^{(0)}\alpha,\qquad\alpha\diamond\beta\diamond\alpha=2^{\frac{d}{2}}(\alpha\diamond\beta)^{(0)}\alpha,\qquad(\pi^{\frac{1-s}{2}}\circ\tau)(\alpha)=\sigma\alpha$$ for an exterior form $\beta\in\Lambda V^*$ satisfying $(\alpha\diamond\beta)^{(0)}\neq0$.
    \end{enumerate}
\end{theorem}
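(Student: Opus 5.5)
Since $\Psi_\gamma\colon(\Lambda V^*,\diamond)\to\End(\Sigma)$ is an isomorphism of unital algebras, I will work in $\End(\Sigma)$ and translate back at the end. The dictionary has three entries. First, the scalar part satisfies $\alpha^{(0)}=2^{-\frac{d}{2}}\mathrm{tr}(\Psi_\gamma(\alpha))$. Second, transposition with respect to $\mathscr{B}$ corresponds to $\pi^{\frac{1-s}{2}}\circ\tau$ by admissibility. Third, the signed square of $\xi$ corresponds to the rank-one endomorphism $E=\kappa\,\xi\otimes\mathscr{B}(-,\xi)$.

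For the trace formula, note that $\mathrm{tr}\circ\Psi_\gamma$ vanishes on every nonscalar homogeneous basis element $e^{i_1}\diamond\cdots\diamond e^{i_k}$, $k\ge1$. For $k$ odd or $k<d$ one can find a basis one-form anticommuting with the element (or conjugate by one). For $k=d$ one has $\nu=e\diamond(e\diamond\nu)$ with $e\diamond\nu=-\nu\diamond e$, so the trace of the volume form is zero as well. Finally $\mathrm{tr}(1)=2^{\frac d2}$.

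\textbf{(1)$\Rightarrow$(2).} Write $A=\Psi_\gamma(\beta)$. Then
\[
E A E=\kappa^2\,\mathscr{B}(A\xi,\xi)\,\xi\otimes\mathscr{B}(-,\xi)=\mathrm{tr}(EA)\,E,
\]
since $\mathrm{tr}(EA)=\kappa\,\mathscr{B}(A\xi,\xi)$. Translating gives $\alpha\diamond\beta\diamond\alpha=2^{\frac d2}(\alpha\diamond\beta)^{(0)}\alpha$. The adjoint of $E$ with respect to $\mathscr{B}$ is obtained from
\[
\mathscr{B}(E\eta_1,\eta_2)=\kappa\,\mathscr{B}(\eta_1,\xi)\mathscr{B}(\xi,\eta_2)=\sigma\,\mathscr{B}(\eta_1,E\eta_2),
\]
so $E^{t}=\sigma E$. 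Since transposition corresponds to $\pi^{\frac{1-s}{2}}\circ\tau$, this gives the second equation.

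\textbf{(2)$\Rightarrow$(3).} If $\alpha=0$ the statement is degenerate and handled trivially (it is the square of $\xi=0$). Otherwise take $\beta=1$, which gives the first equation. Moreover $EAE=\mathrm{tr}(EA)E$ for all $A$ with $E\neq0$ forces some $A$ with $\mathrm{tr}(EA)\ne0$: if $\mathrm{tr}(EA)=0$ for all $A$ then $EAE=0$ for all $A$, and choosing $A$ to be rank one shows $E=0$. So an admissible $\beta$ exists.

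\textbf{(3)$\Rightarrow$(1).} This is the main step. Set $E=\Psi_\gamma(\alpha)$, $A=\Psi_\gamma(\beta)$ and $c=\mathrm{tr}(EA)\neq0$.
\begin{enumerate}
\item From $EAE=cE$ I want to deduce that $E$ has rank one. The endomorphism $P=c^{-1}EA$ is idempotent with image equal to $\Im E$. Its trace is $c^{-1}\mathrm{tr}(EA)=1$, so $\mathrm{rank}\,P=1$ and hence $\mathrm{rank}\,E=1$.
\item Write $E=u\otimes f$ with $f\in\Sigma^*$ and, by non-degeneracy of $\mathscr{B}$, $f=\mathscr{B}(-,w)$.
\item The condition $E^t=\sigma E$ reads $\sigma\, w\otimes\mathscr{B}(-,u)=\sigma E^t=E$, which forces $w=\lambda u$ for some $\lambda\ne0$.
\item Rescale $\xi=|\lambda|^{1/2}u$ and set $\kappa=\mathrm{sign}\,\lambda$, which yields $E=\kappa\,\xi\otimes\mathscr{B}(-,\xi)$.
\end{enumerate}

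The equation $\alpha\diamond\alpha=2^{\frac d2}\alpha^{(0)}\alpha$ is then automatically consistent and is not needed separately. The obstacle is mainly bookkeeping: the rank-one deduction, and checking the trace normalization carefully, including the vanishing of the trace of $\nu$.
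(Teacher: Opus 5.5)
Your proof is correct and follows essentially the route of the cited source \cite[Thm.~3.20]{Cortes_Lazaroiu_Shahbazi2021}; the paper itself only quotes the result. That route characterizes signed squares in $\End(\Sigma)$ as rank-one $\mathscr{B}$-(skew-)symmetric endomorphisms via $EAE=\mathrm{tr}(EA)E$, and then transfers through $\Psi_\gamma$ using $\mathrm{tr}\circ\Psi_\gamma=2^{\frac{d}{2}}(\,\cdot\,)^{(0)}$ and the identification of the $\mathscr{B}$-transpose with $\pi^{\frac{1-s}{2}}\circ\tau$. Two small nits: in step (3)(c) one has $E^t=\sigma\,w\otimes\mathscr{B}(-,u)$, so the condition reads $w\otimes\mathscr{B}(-,u)=u\otimes\mathscr{B}(-,w)$ (your extra $\sigma$ is harmless since you only extract $w=\lambda u$); and $\alpha=0$ is not ``handled trivially'' but genuinely excluded, because (3) can never hold for $\alpha=0$, so the equivalence is really a statement about nonzero $\alpha$ (equivalently $\xi\neq0$).
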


\begin{remark}
    The symbol $\alpha^{(0)}\in\R$ denotes the degree zero component of the exterior form $\alpha\in\Lambda V^*$.
\end{remark}

One of the key properties of the signed spinor square maps is that they are $\mathrm{Spin}_0(V^*,h^*)$-equivariant.

\begin{proposition}[{\cite[Prop.\ 3.19]{Cortes_Lazaroiu_Shahbazi2021}}]\label{prop:equivariance_quadratic_map}
    The signed spinor square maps are $\mathrm{Spin}_0(V^*,h^*)$-equivariant. That is: $$\mathcal{E}^\kappa_\gamma(\gamma(x)\xi)=\mathrm{Ad}_x(\mathcal{E}^\kappa_\gamma(\xi))$$ for all $x\in\mathrm{Spin}_0(V^*,h^*)$ and $\xi\in\Sigma$, where the right-hand side of the identity above denotes the natural action of $\mathrm{Ad}_x\in\mathrm{SO}_0(V^*,h^*)$ on $\Lambda V^*$.
\end{proposition}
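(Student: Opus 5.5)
We need to prove equivariance: $\mathcal{E}^\kappa_\gamma(\gamma(x)\xi)=\mathrm{Ad}_x(\mathcal{E}^\kappa_\gamma(\xi))$ for $x\in\mathrm{Spin}_0(V^*,h^*)$. The plan is to work first in $\End(\Sigma)$, where the statement is about conjugation, and then transport it to $\Lambda V^*$ through the algebra isomorphism $\Psi_\gamma$.

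\emph{Step 1: the endomorphism level.} For $x\in\mathrm{Spin}_0(V^*,h^*)$, admissibility of $\mathscr{B}$ gives
$$\mathscr{B}(\eta,\gamma(x)\xi)=\mathscr{B}\big(\gamma((\pi^{\frac{1-s}{2}}\circ\tau)(x)^{-1})\eta,\xi\big),$$
applying the admissibility identity to the element $(\pi^{\frac{1-s}{2}}\circ\tau)(x)^{-1}$. On the spin group, $\tau(x)=x^{-1}$, which is the normalization defining $\mathrm{Spin}_0$ as even elements with $x\tau(x)=1$ in the identity component. Since $x$ is even, $\pi(x)=x$. Hence $(\pi^{\frac{1-s}{2}}\circ\tau)(x)=x^{-1}$, and $\mathscr{B}(-,\gamma(x)\xi)=\mathscr{B}(\gamma(x)^{-1}-,\xi)$. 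It follows that
$$\mathcal{E}^\kappa(\gamma(x)\xi)=\kappa\,\gamma(x)\xi\otimes\mathscr{B}(\gamma(x)^{-1}-,\xi)=\gamma(x)\circ\mathcal{E}^\kappa(\xi)\circ\gamma(x)^{-1}.$$

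\emph{Step 2: transport to forms.} Apply $\Psi_\gamma^{-1}=\Psi^{-1}\circ\gamma^{-1}$, which is an algebra isomorphism. This gives
$$\mathcal{E}^\kappa_\gamma(\gamma(x)\xi)=\hat x\diamond\mathcal{E}^\kappa_\gamma(\xi)\diamond\hat x^{-1},\qquad \hat x\coloneqq\Psi^{-1}(x).$$
It remains to identify the inner automorphism $\alpha\mapsto\hat x\diamond\alpha\diamond\hat x^{-1}$ of $(\Lambda V^*,\diamond)$ with the natural action of $\mathrm{Ad}_x\in\mathrm{SO}_0(V^*,h^*)$ on $\Lambda V^*$.

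\emph{Step 3: identifying the two actions.} On one-forms, conjugation by $x$ preserves $V^*$ and acts there by the isometry $\mathrm{Ad}_x$; this is the twisted adjoint representation, and twisting is irrelevant for even $x$. The Chevalley–Riesz isomorphism is $\mathrm{O}(V^*,h^*)$-equivariant: it is built from $h$ alone, being the antisymmetrization map, and the operations $\wedge$ and $\iota$ are natural under isometries. Consequently conjugation acts on $\Lambda V^*$ by automorphisms of $\diamond$ that restrict to $\mathrm{Ad}_x$ on $V^*$.

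The natural extension $\Lambda\mathrm{Ad}_x$ also preserves $\diamond$, since $\diamond$ is defined from $\wedge$ and contractions, both equivariant under isometries. Two algebra automorphisms of $(\Lambda V^*,\diamond)$ that agree on the generating subspace $V^*$ coincide. Hence conjugation by $\hat x$ equals $\Lambda\mathrm{Ad}_x$, which proves the equivariance.

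The main obstacle is the sign bookkeeping in Step 1. One must check that $\tau(x)=x^{-1}$ with the paper's convention $\theta^2=h^*(\theta,\theta)$, rather than only $\tau(x)x=\pm1$. The sign $\pm$ would be constant on the identity component, and it equals $+1$ at $x=1$. If it came out as $-1$, the conjugation formula would change only by an overall sign, which cancels between $\gamma(x)$ and $\gamma(x)^{-1}$.
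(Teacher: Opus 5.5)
Your argument is correct, and it is the standard one. Admissibility together with $\tau(x)=x^{-1}$ on $\mathrm{Spin}_0(V^*,h^*)$ gives $\mathcal{E}^\kappa(\gamma(x)\xi)=\gamma(x)\circ\mathcal{E}^\kappa(\xi)\circ\gamma(x)^{-1}$. Then $\Psi_\gamma^{-1}$ turns this inner automorphism into $\Lambda\mathrm{Ad}_x$, because both are $\diamond$-automorphisms that agree on the generating subspace $V^*$. The paper does not prove this statement itself; it imports it from \cite{Cortes_Lazaroiu_Shahbazi2021}. So there is no alternative route to compare, and you only need to repair two slips.

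\textbf{First slip: a spurious inverse in Step 1.} Admissibility must be applied to the element $(\pi^{\frac{1-s}{2}}\circ\tau)(x)$ itself. This gives
$\mathscr{B}(\eta,\gamma(x)\xi)=\mathscr{B}(\gamma((\pi^{\frac{1-s}{2}}\circ\tau)(x))\eta,\xi)=\mathscr{B}(\gamma(x)^{-1}\eta,\xi)$.
As you wrote it, substituting $(\pi^{\frac{1-s}{2}}\circ\tau)(x)=x^{-1}$ would yield $\mathscr{B}(\gamma(x)\eta,\xi)$. That contradicts the line that follows, which is the correct one.

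\textbf{Second slip: the closing hedge is false.} The case $x\tau(x)=-1$ actually occurs, on the non-identity component of $\mathrm{Spin}(p,p)$ (products of an odd number of unit vectors of each sign). There $\mathscr{B}(\eta,\gamma(x)\xi)=-\mathscr{B}(\gamma(x)^{-1}\eta,\xi)$. Hence $\mathcal{E}^\kappa(\gamma(x)\xi)=-\gamma(x)\circ\mathcal{E}^\kappa(\xi)\circ\gamma(x)^{-1}$, i.e.\ $\mathcal{E}^\kappa_\gamma(\gamma(x)\xi)=\mathrm{Ad}_x(\mathcal{E}^{-\kappa}_\gamma(\xi))$. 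The sign enters through the single occurrence of $\mathscr{B}$, so nothing cancels between $\gamma(x)$ and $\gamma(x)^{-1}$. This is precisely why the statement is restricted to $\mathrm{Spin}_0$.

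Your continuity argument already excludes this case on $\mathrm{Spin}_0$: the sign of $x\tau(x)$ is locally constant and equals $+1$ at $x=1$. So the proof stands once that last sentence is deleted.
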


Let $\xi\in\Sigma$ be an irreducible real spinor and let $\alpha_\xi=\mathcal{E}^\kappa_\gamma(\xi)\in\Lambda V^*$ be its signed square. Consider the stabilizers of $\xi$ in $\mathrm{Spin}_0(V^*,h^*)$ and $\alpha_\xi$ in $\mathrm{SO}_0(V^*,h^*)$, respectively: \begin{align*}
    \mathrm{Stab}(\xi)&\coloneqq\{x\in\mathrm{Spin}_0(V^*,h^*)\mid\gamma(x)\xi=\xi\},\\
    \mathrm{Stab}(\alpha_\xi)&\coloneqq\{A\in\mathrm{SO}_0(V^*,h^*)\mid A(\alpha_\xi)=\alpha_\xi\}.
\end{align*}

The equivariance of the signed spinor square maps gives us an induced map $\mathrm{Ad}\colon\mathrm{Stab}(\xi)\to\mathrm{Stab}(\alpha_\xi)$. It turns out that this map is an isomorphism of Lie groups.

\begin{corollary}\label{cor:stabilizers_isomorphic}
    Let $\xi\in\Sigma$ be an irreducible real spinor and let $\alpha_\xi\in\Lambda V^*$ be its signed square. Then: $$\mathrm{Stab}(\xi)\cong\mathrm{Stab}(\alpha_\xi).$$
\end{corollary}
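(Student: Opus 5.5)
The map $\mathrm{Ad}\colon\mathrm{Stab}(\xi)\to\mathrm{Stab}(\alpha_\xi)$ is well defined by Proposition \ref{prop:equivariance_quadratic_map}: if $\gamma(x)\xi=\xi$ then $\mathrm{Ad}_x(\alpha_\xi)=\mathcal{E}^\kappa_\gamma(\gamma(x)\xi)=\alpha_\xi$. It is the restriction of the Lie group homomorphism $\mathrm{Ad}\colon\mathrm{Spin}_0(V^*,h^*)\to\mathrm{SO}_0(V^*,h^*)$, so it is a smooth homomorphism. Both stabilizers are closed subgroups, hence Lie groups. It therefore suffices to show that this map is bijective; a bijective Lie group homomorphism between closed subgroups is then an isomorphism of Lie groups.

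For injectivity, recall that $\mathrm{Ad}\colon\mathrm{Spin}_0\to\mathrm{SO}_0$ is a double cover with kernel $\{\pm1\}$. If $x,-x$ both lie in $\mathrm{Stab}(\xi)$, then $\xi=\gamma(-x)\xi=-\gamma(x)\xi=-\xi$, which forces $\xi=0$. We assume $\xi\neq0$; otherwise $\alpha_\xi=0$ and the statement is degenerate. Hence $\mathrm{Stab}(\xi)\cap\{\pm1\}=\{1\}$, and the restricted map is injective.

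For surjectivity, let $A\in\mathrm{Stab}(\alpha_\xi)$ and choose $x\in\mathrm{Spin}_0$ with $\mathrm{Ad}_x=A$. By equivariance, $\mathcal{E}^\kappa_\gamma(\gamma(x)\xi)=\alpha_\xi=\mathcal{E}^\kappa_\gamma(\xi)$. Since $\Psi_\gamma$ is an isomorphism, this gives
$$\gamma(x)\xi\otimes\mathscr{B}(-,\gamma(x)\xi)=\xi\otimes\mathscr{B}(-,\xi)$$
as endomorphisms of $\Sigma$. These are rank-one endomorphisms with images spanned by $\gamma(x)\xi$ and $\xi$ respectively, so $\gamma(x)\xi=\lambda\xi$ for some $\lambda\in\R\setminus\{0\}$. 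Substituting back gives $\lambda^2\,\xi\otimes\mathscr{B}(-,\xi)=\xi\otimes\mathscr{B}(-,\xi)$. Nondegeneracy of $\mathscr{B}$ ensures $\mathscr{B}(-,\xi)\neq0$, so $\lambda^2=1$ and $\lambda=\pm1$. If $\lambda=1$, then $x\in\mathrm{Stab}(\xi)$. If $\lambda=-1$, then $-x\in\mathrm{Stab}(\xi)$ and $\mathrm{Ad}_{-x}=A$. In either case $A$ lies in the image.

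The only delicate point is this sign ambiguity $\lambda=\pm1$. It is absorbed precisely because the kernel of $\mathrm{Spin}_0\to\mathrm{SO}_0$ is $\{\pm1\}$, which is also what makes injectivity work.
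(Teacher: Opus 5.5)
Your proof is correct and follows the same route as the paper. Injectivity comes from $\ker(\mathrm{Ad})=\{\pm1\}$ together with $\gamma(-x)\xi=-\xi\neq\xi$, and surjectivity comes from lifting $A$ through the double cover and using equivariance plus the fact that $\mathcal{E}^\kappa_\gamma(\eta)=\mathcal{E}^\kappa_\gamma(\xi)$ forces $\eta=\pm\xi$. The paper only asserts that last fact; your rank-one argument proves it, and you also make explicit the hypothesis $\xi\neq0$ that both proofs need.
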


\begin{proof}
    Note that for each $x\in\mathrm{Stab}(\xi)$ we have $\mathrm{Ad}^{-1}(\mathrm{Ad}_x)=\{x,-x\}$, but the element $-x$ does not belong to $\mathrm{Stab}(\xi)$ since $\gamma(-x)\xi=-\xi$, thus $\mathrm{Ad}$ restricted to $\mathrm{Stab}(\xi)$ is injective. Moreover, the double cover homomorphism $\mathrm{Ad}\colon\mathrm{Spin}_0(V^*,h^*)\to\mathrm{SO}_0(V^*,h^*)$ is surjective and $\mathcal{E}^\kappa_\gamma(\eta)=\mathcal{E}^\kappa_\gamma(\xi)$ if and only if $\eta=\pm\xi$. This implies that $\mathrm{Ad}$ restricted to $\mathrm{Stab}(\xi)$ is surjective.
\end{proof}

We end this subsection by introducing the following terminology, which we borrow from \cite{Meinrenken2013}, that we will use to study spinors lying in the kernel of a given endomorphism in terms of their signed square.

\begin{definition}
    Let $(\Sigma,\gamma)$ be an irreducible real Clifford module for $(V^*,h^*)$. The \emph{dequantization} of an endomorphism $Q\in\End(\Sigma)$ is defined as $\mathfrak{q}\coloneqq\Psi_\gamma^{-1}(Q)\in\Lambda V^*$.
\end{definition}

\begin{lemma}[{\cite[Prop.\ 3.26]{Cortes_Lazaroiu_Shahbazi2021}}]\label{lemma:constrained_spinorial_forms}
    Let $(\Sigma,\gamma)$ be an irreducible real Clifford module for $(V^*,h^*)$, $\mathscr{B}$ an admissible bilinear pairing, and $Q\in\End(\Sigma)$. Then $\xi\in\Sigma$ satisfies $Q(\xi)=0$ if and only if: $$\mathfrak{q}\diamond\mathcal{E}^\kappa_\gamma(\xi)=0$$ or, equivalently: $$\mathcal{E}^\kappa_\gamma(\xi)\diamond(\pi^{\smash{{\frac{1-s}{2}}}}\circ\tau)(\mathfrak{q})=0.$$
\end{lemma}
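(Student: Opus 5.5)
The plan is to reduce both equivalences to a statement about endomorphisms of $\Sigma$ and then transfer back through the algebra isomorphism $\Psi_\gamma$. Write $Q=\Psi_\gamma(\mathfrak{q})$, and fix $\xi$ with $E\coloneqq\mathcal{E}^\kappa(\xi)=\kappa\,\xi\otimes\mathscr{B}(-,\xi)$. Since $\Psi_\gamma$ is an isomorphism of unital associative algebras, $\mathfrak{q}\diamond\mathcal{E}^\kappa_\gamma(\xi)=0$ holds if and only if $Q\circ E=0$ in $\End(\Sigma)$. This composition is $Q\circ E=\kappa\,(Q\xi)\otimes\mathscr{B}(-,\xi)$. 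If $\xi=0$ both sides of the equivalence are trivially true. Otherwise non-degeneracy of $\mathscr{B}$ gives that $\mathscr{B}(-,\xi)$ is a non-zero functional. A rank-one tensor $v\otimes f$ with $f\neq0$ vanishes exactly when $v=0$, so $Q\circ E=0$ if and only if $Q\xi=0$. This settles the first equivalence.

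For the second form I would compute $E\circ Q$ using admissibility. For any $\eta\in\Sigma$ we have $(E\circ Q)(\eta)=\kappa\,\xi\,\mathscr{B}(Q\eta,\xi)$. Writing $Q=\gamma(\Psi(\mathfrak{q}))$ and applying the admissibility identity gives
\begin{equation*}
\mathscr{B}(Q\eta,\xi)=\mathscr{B}\bigl(\eta,\gamma\bigl(\Psi((\pi^{\frac{1-s}{2}}\circ\tau)(\mathfrak{q}))\bigr)\xi\bigr).
\end{equation*}
Here I use that $\pi$ and $\tau$ commute with $\Psi$. Let $Q'\coloneqq\Psi_\gamma((\pi^{\frac{1-s}{2}}\circ\tau)(\mathfrak{q}))$. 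Then $E\circ Q=\kappa\,\xi\otimes\mathscr{B}(-,Q'\xi)$. Again by non-degeneracy, for $\xi\neq0$ this vanishes if and only if $Q'\xi=0$.

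It remains to see that $Q'\xi=0$ is equivalent to $Q\xi=0$. Here I would not pass through $Q'$ at all, and instead handle $\mathcal{E}^\kappa_\gamma(\xi)\diamond(\pi^{\frac{1-s}{2}}\circ\tau)(\mathfrak{q})$ directly. Its image under $\Psi_\gamma$ is $E\circ Q'$. Applying admissibility once more to $\mathscr{B}(Q'\eta,\xi)$ produces $\mathscr{B}(\eta,\widetilde Q\xi)$, where $\widetilde Q$ corresponds to $(\pi^{\frac{1-s}{2}}\circ\tau)^2(\mathfrak{q})$. Since $\pi$ and $\tau$ are commuting involutions, this form is just $\mathfrak{q}$, so $\widetilde Q=Q$. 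Hence $E\circ Q'=\kappa\,\xi\otimes\mathscr{B}(-,Q\xi)$, which vanishes if and only if $Q\xi=0$.

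The main obstacle is only this bookkeeping: applying the anti-automorphism to the correct factor so that the second formulation yields exactly $Q$, and not its adjoint. The non-degeneracy and rank-one arguments are routine.
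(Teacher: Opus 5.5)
Your argument is correct and is essentially the standard proof behind the cited \cite[Prop.\ 3.26]{Cortes_Lazaroiu_Shahbazi2021}; the paper itself gives no proof. The first form follows from the rank-one identity $Q\circ\mathcal{E}^\kappa(\xi)=\kappa\,(Q\xi)\otimes\mathscr{B}(-,\xi)$ plus non-degeneracy. For the second form you compute $E\circ Q'=\kappa\,\xi\otimes\mathscr{B}(-,Q\xi)$ directly from admissibility; one could equally apply the anti-automorphism $\pi^{\frac{1-s}{2}}\circ\tau$ to $\mathfrak{q}\diamond\mathcal{E}^\kappa_\gamma(\xi)=0$ and use $(\pi^{\frac{1-s}{2}}\circ\tau)(\mathcal{E}^\kappa_\gamma(\xi))=\sigma\,\mathcal{E}^\kappa_\gamma(\xi)$.

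Drop your second paragraph, though. $E\circ Q$ represents $\mathcal{E}^\kappa_\gamma(\xi)\diamond\mathfrak{q}$, not the second formulation. Its claim that $Q'\xi=0$ is equivalent to $Q\xi=0$ is false in general, since the $\mathscr{B}$-adjoint of $Q$ need not have the same kernel. Your final computation never relies on it.
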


\subsection{The signed square of a real chiral spinor}

Theorem \ref{thm:characterization_spinorial_forms} can be refined for real chiral spinors, which exist in neutral signatures. In this case, the Clifford volume form $\nu\in\mathrm{Cl}(V^*,h^*)$ squares to $1$ and lies in the center of the even Clifford algebra $\mathrm{Cl}^{\mathrm{ev}}(V^*,h^*)$. Therefore, we can decompose $\mathrm{Cl}^{\mathrm{ev}}(V^*,h^*)$ into the $\pm1$-eigenspaces of $\nu$ as a direct sum of simple associative algebras: $$\mathrm{Cl}^{\mathrm{ev}}(V^*,h^*)=\mathrm{Cl}^{\mathrm{ev}}_+(V^*,h^*)\oplus\mathrm{Cl}^{\mathrm{ev}}_-(V^*,h^*),$$ where: $$\mathrm{Cl}^{\mathrm{ev}}_\mu(V^*,h^*)\coloneqq\{x\in\mathrm{Cl}^{\mathrm{ev}}(V^*,h^*)\mid \nu x=\mu x\}=\tfrac{1}{2}(1+\mu\nu)\mathrm{Cl}^{\mathrm{ev}}(V^*,h^*),\quad\mu\in\Z_2.$$

We decompose the real representation space $\Sigma$ accordingly as $\Sigma=\Sigma^+\oplus\Sigma^-$, where: $$\Sigma^\mu\coloneqq\{\xi\in\Sigma\mid\gamma(\nu)\xi=\mu\xi\}=\tfrac{1}{2}(\Id+\mu\gamma(\nu))\Sigma.$$

The subspaces $\Sigma^\mu\subset\Sigma$ are preserved by the restriction of $\gamma$ to $\mathrm{Cl}^{\mathrm{ev}}(V^*,h^*)$. Hence, the restriction of $\gamma$ to $\mathrm{Cl}^{\mathrm{ev}}(V^*,h^*)$ decomposes as a sum of two irreducible representations: $$\gamma_\mu\colon\mathrm{Cl}^{\mathrm{ev}}(V^*,h^*)\to\End(\Sigma^\mu)$$ distinguished by the value they take on the volume form $\nu\in\mathrm{Cl}^{\mathrm{ev}}(V^*,h^*)$, namely $\gamma_\mu(\nu)=\mu\,\Id$.

\begin{definition}
    A spinor $\xi\in\Sigma$ is called \emph{chiral of chirality} $\mu\in\Z_2$ if it belongs to $\Sigma^\mu\subset\Sigma$.
\end{definition}

Let $\alpha_\xi=\mathcal{E}^\kappa_\gamma(\xi)\in\Lambda V^*$ be the signed square of the spinor $\xi\in\Sigma$. By Lemma \ref{lemma:constrained_spinorial_forms}, $\xi\in\Sigma$ is chiral of chirality $\mu\in\Z_2$ if and only if $\nu\diamond\alpha_\xi=\mu\alpha_\xi$, which, by Proposition \ref{prop:product_volume_form}, is equivalent to: $$*(\pi\circ\tau)(\alpha_\xi)=\mu\alpha_\xi.$$

Combining this result with Theorem \ref{thm:characterization_spinorial_forms} we obtain the algebraic characterization of the signed square of an irreducible real chiral spinor, as stated in the following corollary.

\begin{corollary}[{\cite[Cor.\ 3.28]{Cortes_Lazaroiu_Shahbazi2021}}]\label{cor:characterization_chiral_spinorial_forms}
    Let $(\Sigma,\gamma)$ be an irreducible real Clifford module for $(V^*,h^*)$ and let $\mathscr{B}$ be an admissible bilinear pairing of symmetry type $\sigma\in\Z_2$ and adjoint type $s\in\Z_2$. Then the following statements are equivalent for an exterior form $\alpha\in\Lambda V^*$, where $\mu\in\Z_2$ is a fixed chirality type: \begin{enumerate}
        \item $\alpha$ is the signed square of a chiral spinor of chirality $\mu$.
        \item $\alpha$ satisfies the following equations: $$\alpha\diamond\beta\diamond\alpha=2^{\frac{d}{2}}(\alpha\diamond\beta)^{(0)}\alpha,\qquad(\pi^{\frac{1-s}{2}}\circ\tau)(\alpha)=\sigma\alpha,\qquad *(\pi\circ\tau)(\alpha)=\mu\alpha$$ for every exterior form $\beta\in\Lambda V^*$.
        \item $\alpha$ satisfies the following equations: \begin{gather*}
            \alpha\diamond\alpha=2^{\frac{d}{2}}\alpha^{(0)}\alpha,\qquad\alpha\diamond\beta\diamond\alpha=2^{\frac{d}{2}}(\alpha\diamond\beta)^{(0)}\alpha,\\
            (\pi^{\frac{1-s}{2}}\circ\tau)(\alpha)=\sigma\alpha,\qquad *(\pi\circ\tau)(\alpha)=\mu\alpha
        \end{gather*} for an exterior form $\beta\in\Lambda V^*$ satisfying $(\alpha\diamond\beta)^{(0)}\neq0$.
    \end{enumerate}
\end{corollary}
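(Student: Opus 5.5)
The corollary follows by combining Theorem \ref{thm:characterization_spinorial_forms} with the chirality criterion derived just before the statement. The work is to check that the extra condition $*(\pi\circ\tau)(\alpha)=\mu\alpha$ correctly records chirality in each of the three formulations. The plan is to prove (1)$\Rightarrow$(2)$\Rightarrow$(3)$\Rightarrow$(1).

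For (1)$\Rightarrow$(2), let $\alpha=\mathcal{E}^\kappa_\gamma(\xi)$ with $\xi\in\Sigma^\mu$. Theorem \ref{thm:characterization_spinorial_forms} gives the quadratic identity and the symmetry condition $(\pi^{\frac{1-s}{2}}\circ\tau)(\alpha)=\sigma\alpha$. For chirality, apply Lemma \ref{lemma:constrained_spinorial_forms} to $Q=\gamma(\nu)-\mu\,\Id$, whose dequantization is $\mathfrak{q}=\nu-\mu$. This gives $\nu\diamond\alpha=\mu\alpha$, and Proposition \ref{prop:product_volume_form} turns it into $*(\pi\circ\tau)(\alpha)=\mu\alpha$.

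For (2)$\Rightarrow$(3), take $\beta=1$ to get $\alpha\diamond\alpha=2^{\frac d2}\alpha^{(0)}\alpha$. We also need some $\beta$ with $(\alpha\diamond\beta)^{(0)}\neq0$. If $\alpha\neq0$, then $\Psi_\gamma(\alpha)\neq0$. The pairing $(x,y)\mapsto(x\diamond y)^{(0)}$ is a nonzero multiple of the trace form $\mathrm{tr}(\Psi_\gamma(x)\Psi_\gamma(y))$ on $\End(\Sigma)$, which is non-degenerate, so such a $\beta$ exists. The case $\alpha=0$ is the square of $\xi=0$, which is chiral of either chirality, so it can be handled separately or excluded by convention.

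For (3)$\Rightarrow$(1), Theorem \ref{thm:characterization_spinorial_forms}(3) gives $\alpha=\mathcal{E}^\kappa_\gamma(\xi)$ for some $\xi\in\Sigma$ and $\kappa\in\Z_2$. By Proposition \ref{prop:product_volume_form}, the condition $*(\pi\circ\tau)(\alpha)=\mu\alpha$ is equivalent to $\nu\diamond\alpha=\mu\alpha$, that is, $(\nu-\mu)\diamond\mathcal{E}^\kappa_\gamma(\xi)=0$. Lemma \ref{lemma:constrained_spinorial_forms} then gives $\gamma(\nu)\xi=\mu\xi$, so $\xi\in\Sigma^\mu$.

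The main obstacle, though a mild one, is the bookkeeping of sign conventions. One has to confirm that the left-multiplication form of Lemma \ref{lemma:constrained_spinorial_forms} is the one matching $\nu\diamond\alpha$, as opposed to $\alpha\diamond\nu$. This holds because $\Psi_\gamma(\nu\diamond\alpha)=\gamma(\nu)\circ(\kappa\,\xi\otimes\mathscr{B}(-,\xi))=\kappa\,(\gamma(\nu)\xi)\otimes\mathscr{B}(-,\xi)$. Since $\mathscr{B}$ is non-degenerate and $\xi\neq0$, the covector $\mathscr{B}(-,\xi)$ is nonzero. This makes the equivalence between $\nu\diamond\alpha=\mu\alpha$ and $\gamma(\nu)\xi=\mu\xi$ transparent, even without invoking the lemma.
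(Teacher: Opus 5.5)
Your proposal is correct and follows essentially the same route as the paper. The paper combines Theorem \ref{thm:characterization_spinorial_forms} with the chirality criterion $\xi\in\Sigma^\mu \iff *(\pi\circ\tau)(\alpha_\xi)=\mu\alpha_\xi$, which it obtains from Lemma \ref{lemma:constrained_spinorial_forms} (with $\mathfrak{q}=\nu-\mu$) and Proposition \ref{prop:product_volume_form}, exactly as you do. Your trace-form argument for the existence of $\beta$ and your remark on the degenerate case $\alpha=0$ (where (1) and (2) hold but (3) cannot) go beyond what the paper spells out, and both are correct.
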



\section{The square of a chiral spinor in neutral signatures}\label{sec:neutral_square}

In this section, we study quadratic vector spaces $(V,h)$ of neutral signature $(p,p)$. In these signatures there always exist real chiral spinors. Our goal is to characterize the signed square of an irreducible real chiral spinor $\xi\in\Sigma^\mu$ of chirality $\mu\in\Z_2$ in signature $(p,p)$ for every $p\in\N$, possibly under extra assumptions. The cases $p=1$ and $p=2$ have been characterized in \cite[Sec.\ 3.6.2]{Cortes_Lazaroiu_Shahbazi2021} and \cite[Sec.\ 3.6.4]{Cortes_Lazaroiu_Shahbazi2021}, respectively. We include them for completeness.

\subsection{Signature \texorpdfstring{$(1,1)$}{(1,1)}}

Let $(V,h)$ be a quadratic vector space of signature $(1,1)$ and let $(\Sigma,\gamma)$ be an irreducible real Clifford module for $(V^*,h^*)$. Denote by $\Sigma=\Sigma^+\oplus\Sigma^-$ the chiral decomposition of $\Sigma\cong\R^2$ with respect to the volume form $\nu$ of $(V,h)$. We equip $(\Sigma,\gamma)$ with a symmetric admissible bilinear pairing $\mathscr{B}$ of positive adjoint type, see Proposition \ref{prop:admissible_bilinear_pairings}. By Corollary \ref{cor:characterization_chiral_spinorial_forms}, an exterior form $\alpha\in\Lambda V^*$ is the signed square of a chiral spinor $\xi\in\Sigma^\mu$ of chirality $\mu\in\Z_2$ if and only if: $$\alpha\diamond\alpha=2\alpha^{(0)}\alpha,\qquad \alpha\diamond\beta\diamond\alpha=2(\alpha\diamond\beta)^{(0)}\alpha,\qquad\tau(\alpha)=\alpha,\qquad*(\pi\circ\tau)(\alpha)=\mu\alpha$$ for an exterior form $\beta\in\Lambda V^*$ satisfying $(\alpha\diamond\beta)^{(0)}\neq0$.

\begin{proposition}\label{prop:square_(1,1)}
    Let $(V,h)$ be a quadratic vector space of signature $(1,1)$. An exterior form $\alpha\in\Lambda V^*$ is the signed square of an irreducible real chiral spinor $\xi\in\Sigma^\mu$ of chirality $\mu\in\Z_2$ if and only if: $$\alpha=\theta,$$ where $\theta\in V^*$ satisfies $*\theta=-\mu\theta$.
\end{proposition}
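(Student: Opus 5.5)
We use the characterization in Corollary \ref{cor:characterization_chiral_spinorial_forms}, specialized above to signature $(1,1)$ with $\mathscr{B}$ symmetric ($\sigma=1$) and of positive adjoint type ($s=1$). We write a general form as $\alpha=a+\theta+b\,\nu$ with $a,b\in\R$ and $\theta\in V^*$. The proof then has three steps: impose the two linear conditions, check the quadratic conditions on the survivors, and verify that every such $\theta$ occurs.

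\emph{Linear conditions.} The reversion acts by $+1$ on degrees $0$ and $1$ and by $-1$ on degree $2$. Hence $\tau(\alpha)=\alpha$ forces $b=0$.

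Next we use $(\pi\circ\tau)(\beta)=(-1)^{\binom{k+1}{2}}\beta$ for $\beta\in\Lambda^kV^*$, which gives the signs $+1$ in degree $0$ and $-1$ in degree $1$. The chirality condition therefore reads
$$*a-*\theta=\mu a+\mu\theta.$$
Since $*a=a\nu$ lies in degree $2$, comparing components gives $a\nu=0$ in degree $2$, so $a=0$. In degree $1$ it gives $*\theta=-\mu\theta$. So the linear conditions already reduce $\alpha$ to the stated form $\alpha=\theta$ with $*\theta=-\mu\theta$.

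\emph{Quadratic conditions.} The first condition is $\alpha\diamond\alpha=2\alpha^{(0)}\alpha=0$. Here $\theta\diamond\theta=\escal{\theta,\theta}$, so we need $\theta$ to be null. This follows from anti-self-duality: in signature $(1,1)$ we have $*^2=1$ on $1$-forms, the eigenspaces of $*$ are the two null lines, and $\escal{\theta,*\theta}=0$ because $\theta\wedge\theta=0$. Hence $\escal{\theta,\theta}=-\mu\escal{\theta,*\theta}=0$.

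The remaining condition is $\theta\diamond\beta\diamond\theta=2(\theta\diamond\beta)^{(0)}\theta$, and it suffices to check it for one $\beta$ with $(\theta\diamond\beta)^{(0)}\ne0$. Take $\beta=\eta$ a null $1$-form with $\escal{\theta,\eta}=1/2$. The Clifford relation gives $\theta\diamond\eta\diamond\theta=2\escal{\theta,\eta}\theta-\eta\diamond\theta\diamond\theta=\theta$. On the other side, $(\theta\diamond\eta)^{(0)}=\escal{\theta,\eta}=1/2$, so $2(\theta\diamond\eta)^{(0)}\theta=\theta$ and the condition holds.

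\emph{The case $\theta=0$.} If $\theta=0$, then $\alpha=0$ is the square of $\xi=0$. We regard this as included, or excluded if the statement is meant for nonzero $\xi$ and nonzero $\theta$.

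\emph{Converse.} Every $\theta$ with $*\theta=-\mu\theta$, together with $\alpha=\theta$, satisfies all the equations in item (3) of Corollary \ref{cor:characterization_chiral_spinorial_forms}, with a suitable $\beta$ as above. Hence each such $\theta$ is a signed square of a chiral spinor of chirality $\mu$.

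\emph{Main obstacle.} The step most likely to go wrong is the sign bookkeeping: getting $(\pi\circ\tau)$ right on $1$-forms, and the convention $*1=\nu$. These signs decide whether the final condition reads $*\theta=-\mu\theta$ or $*\theta=+\mu\theta$.
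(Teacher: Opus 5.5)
Your proposal is correct and follows the paper's proof essentially step by step. The linear conditions $\tau(\alpha)=\alpha$ and $*(\pi\circ\tau)(\alpha)=\mu\alpha$ reduce $\alpha$ to a one-form $\theta$ with $*\theta=-\mu\theta$, and the quadratic conditions are then checked with a one-form $\eta$ paired non-trivially with $\theta$. The only differences are cosmetic: you derive $\escal{\theta,\theta}=0$ from $\escal{\theta,*\theta}=0$ rather than from the paper's Kähler--Atiyah identity $(*\theta)\diamond(*\theta)=-\theta\diamond\theta$ via $\nu$, and you explicitly handle the degenerate case $\theta=0$, which the paper leaves implicit.
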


\begin{proof}
    Let $\alpha=\sum_{k=0}^2\alpha^{(k)}$, with $\alpha^{(k)}\in\Lambda^kV^*$. The linear equation $\tau(\alpha)=\alpha$ implies that $\alpha^{(2)}=0$. The chirality linear equation $*(\pi\circ\tau)(\alpha)=\mu\alpha$ implies that $\alpha^{(0)}=0$ and $*\alpha^{(1)}=-\mu\alpha^{(1)}$. Set $\theta\coloneqq\alpha^{(1)}\in V^*$. Then the equation $\alpha\diamond\alpha=2\alpha^{(0)}\alpha$ becomes $\theta\diamond\theta=0$. Using Proposition \ref{prop:product_volume_form} we see that $\theta\diamond\theta=0$ holds automatically. Indeed: $$\theta\diamond\theta=(-\mu*\theta)\diamond(-\mu*\theta)=(*\theta)\diamond(*\theta)=(\theta\diamond\nu)\diamond(-\nu\diamond\theta)=-\theta\diamond\theta,$$ where we have used that $\nu\diamond\nu=1$. Since $\alpha^{(0)}=0$, taking $\beta=1$ in $\alpha\diamond\beta\diamond\alpha=2(\alpha\diamond\beta)^{(0)}\alpha$ does not suffice to characterize the signed square of the spinor $\xi$. In Lorentzian signatures we can choose a one-form $\vartheta\in V^*$ \emph{conjugate} to $\theta$, that is, $\vartheta$ satisfies $\escal{\vartheta,\vartheta}=0$ and $\escal{\theta,\vartheta}=1$. Taking $\beta=\vartheta$ we obtain: $$\alpha\diamond\beta=\theta\diamond\vartheta=\theta\wedge\vartheta+\escal{\theta,\vartheta}$$ and $(\alpha\diamond\beta)^{(0)}=\escal{\theta,\vartheta}=1$. Using $\theta\diamond\vartheta+\vartheta\diamond\theta=2$ and $\theta\diamond\theta=0$ we see that the equation $\theta\diamond\vartheta\diamond\theta=2\theta$ is automatically satisfied.
\end{proof}

\subsection{Signature \texorpdfstring{$(2,2)$}{(2,2)}}

Let $(V,h)$ be a quadratic vector space of signature $(2,2)$ and let $(\Sigma,\gamma)$ be an irreducible real Clifford module for $(V^*,h^*)$. Denote by $\Sigma=\Sigma^+\oplus\Sigma^-$ the chiral decomposition of $\Sigma\cong\R^4$ with respect to the volume form $\nu$ of $(V,h)$. We equip $(\Sigma,\gamma)$ with a skew-symmetric admissible bilinear pairing $\mathscr{B}$ of positive adjoint type, see Proposition \ref{prop:admissible_bilinear_pairings}. By Corollary \ref{cor:characterization_chiral_spinorial_forms}, an exterior form $\alpha\in\Lambda V^*$ is the signed square of a chiral spinor $\xi\in\Sigma^\mu$ of chirality $\mu\in\Z_2$ if and only if: $$\alpha\diamond\alpha=4\alpha^{(0)}\alpha,\qquad \alpha\diamond\beta\diamond\alpha=4(\alpha\diamond\beta)^{(0)}\alpha,\qquad\tau(\alpha)=-\alpha,\qquad*(\pi\circ\tau)(\alpha)=\mu\alpha$$ for an exterior form $\beta\in\Lambda V^*$ satisfying $(\alpha\diamond\beta)^{(0)}\neq0$.

\begin{proposition}\label{prop:square_(2,2)}
    Let $(V,h)$ be a quadratic vector space of signature $(2,2)$. An exterior form $\alpha\in\Lambda V^*$ is the signed square of an irreducible real chiral spinor $\xi\in\Sigma^\mu$ of chirality $\mu\in\Z_2$ if and only if: $$\alpha=\theta_1\wedge\theta_2,$$ where $\theta_1,\theta_2\in V^*$ satisfy $*(\theta_1\wedge\theta_2)=-\mu(\theta_1\wedge\theta_2)$.
\end{proposition}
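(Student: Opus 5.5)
We will argue as in Proposition \ref{prop:square_(1,1)}, using the characterization of Corollary \ref{cor:characterization_chiral_spinorial_forms} recalled just before the statement. Write $\alpha=\sum_{k=0}^4\alpha^{(k)}$.

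\emph{Step 1: the linear conditions.} Since $\tau$ acts on $\Lambda^k V^*$ by $(-1)^{\binom{k}{2}}$, which equals $+1,+1,-1,-1,+1$ for $k=0,\dots,4$, the condition $\tau(\alpha)=-\alpha$ forces $\alpha=\alpha^{(2)}+\alpha^{(3)}$. Next, $\pi\circ\tau$ acts on $\Lambda^2V^*$ by $-1$ and on $\Lambda^3V^*$ by $+1$. Hence the chirality condition $*(\pi\circ\tau)(\alpha)=\mu\alpha$ becomes
\[
-*\alpha^{(2)}=\mu\alpha^{(2)}+\mu\alpha^{(3)}\quad\text{and}\quad *\alpha^{(3)}\in\Lambda^1V^*.
\]
Comparing degrees gives $\alpha^{(3)}=0$ and $*\alpha^{(2)}=-\mu\alpha^{(2)}$. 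So $\alpha=\omega$ is a $2$-form with $*\omega=-\mu\omega$, and in particular $\alpha^{(0)}=0$.

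\emph{Step 2: the quadratic conditions.} Since $\alpha^{(0)}=0$, the first equation reads $\omega\diamond\omega=0$. We expand it with the generalized products:
\[
\omega\diamond\omega=\omega\wedge\omega+c_1\,\omega\triangle_1\omega+c_2\langle\omega,\omega\rangle .
\]
By Proposition \ref{prop:properties_triangles}, $\omega\triangle_1\omega=0$ because $2-1$ is odd. Also $\omega\wedge\omega=\omega\triangle_2(*{*\omega})$ up to sign, i.e.\ $\omega\wedge\omega=\pm\langle\omega,*\omega\rangle\nu=\mp\mu\langle\omega,\omega\rangle\nu$. So the equation $\omega\diamond\omega=0$ is equivalent to $\langle\omega,\omega\rangle=0$ and $\omega\wedge\omega=0$; for a self-dual or anti-self-dual form these two conditions are the same. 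By the Plücker relation in dimension $4$, $\omega\wedge\omega=0$ is equivalent to $\omega$ being decomposable. This gives $\omega=\theta_1\wedge\theta_2$.

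\emph{Step 3: the remaining condition.} Conversely, we must show that every nonzero decomposable $\omega$ with $*\omega=-\mu\omega$ satisfies the last condition, $\alpha\diamond\beta\diamond\alpha=4(\alpha\diamond\beta)^{(0)}\alpha$, for some $\beta$ with $(\omega\diamond\beta)^{(0)}\neq0$. This is the step I expect to be the main obstacle. I would choose an adapted Witt basis: (anti-)self-duality with $\omega\wedge\omega=0$ forces $\mathrm{span}(\theta_1,\theta_2)$ to be totally isotropic. Indeed, $\omega=\pm*\omega$ is orthogonal to $\theta_i\wedge\cdot$, which gives $\langle\theta_i,\theta_j\rangle=0$. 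So we may take $\theta_1=e^1$, $\theta_2=e^2$ with dual isotropic covectors $f^1,f^2$ satisfying $\langle e^i,f^j\rangle=\delta_{ij}$. Let $\beta=\lambda f^2\wedge f^1$ with $\lambda$ chosen so that $(\omega\diamond\beta)^{(0)}=1$. Using the anticommutation relations $e^i\diamond e^j=-e^j\diamond e^i$, $(e^i)^{\diamond2}=0$ and $e^i\diamond f^j+f^j\diamond e^i=2\delta_{ij}$, one computes $e^1 e^2 f^2 f^1 e^1 e^2$ in the Clifford algebra to be a multiple of $e^1e^2$, and checks that this multiple equals $4(\omega\diamond\beta)^{(0)}$.

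Alternatively, the converse follows without computation. By Step 2, the signed squares of chiral spinors of chirality $\mu$ lie in the set of decomposable $2$-forms with $*\omega=-\mu\omega$, which is a cone. $\mathrm{Spin}_0(2,2)$ acts transitively on its nonzero elements, since it acts transitively on oriented isotropic planes of a fixed duality type together with their scale. Combined with the equivariance of Proposition \ref{prop:equivariance_quadratic_map} and a sign flip $\kappa\mapsto-\kappa$, this shows that every such form is attained.
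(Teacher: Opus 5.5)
Your proposal is correct and follows essentially the paper's proof: the same degree and duality analysis, the identity $\omega\diamond\omega=\omega\wedge\omega-\langle\omega,\omega\rangle$ combined with the Pl\"ucker relation, isotropy of $\mathrm{span}\{\theta_1,\theta_2\}$, and a test form $\beta$ built from the conjugate Witt covectors. You only assert the last computation, but it checks out: with $\lambda=1$, i.e.\ $\beta=f^2\wedge f^1$, one gets $(\omega\diamond\beta)^{(0)}=1$ and $\omega\diamond\beta\diamond\omega=4\omega$, matching the paper's $-1$ and $-4\alpha$ for $\beta=\vartheta_1\wedge\vartheta_2$.

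One caveat on your alternative argument: $\mathrm{Spin}_0(2,2)$ is not transitive on all nonzero decomposable forms with $*\omega=-\mu\omega$. It acts on this Lorentzian three-dimensional space through $\mathrm{SO}_0(1,2)$ and therefore preserves each nappe of the null cone, so $\omega$ and $-\omega$ lie in different orbits. This is exactly why the flip $\kappa\mapsto-\kappa$ is genuinely needed there rather than optional.
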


\begin{proof}
    Let $\alpha=\sum_{k=0}^4\alpha^{(k)}$, with $\alpha^{(k)}\in\Lambda^kV^*$. The linear equation $\tau(\alpha)=-\alpha$ implies that: $$\alpha^{(0)}=\alpha^{(1)}=\alpha^{(4)}=0.$$
    
    The chirality equation $*(\pi\circ\tau)(\alpha)=\mu\alpha$ implies that $\alpha^{(3)}=0$ and $*\alpha^{(2)}=-\mu\alpha^{(2)}$. Set $\omega\coloneqq\alpha^{(2)}\in\Lambda^2V^*$. Then the equation $\alpha\diamond\alpha=4\alpha^{(0)}\alpha$ becomes: $$\omega\diamond\omega=\omega\wedge\omega-\escal{\omega,\omega}=0.$$

    Then $\omega\wedge\omega=0$ and $\escal{\omega,\omega}=0$, which are equivalent conditions since $\omega$ satisfies $*\omega=-\mu\omega$. The equation $\omega\wedge\omega=0$ implies that $\omega$ is decomposable, that is, $\omega=\theta_1\wedge\theta_2$ for some $\theta_1,\theta_2\in V^*$. Then the condition $*\omega=-\mu\omega$ translates into: $$*(\theta_1\wedge\theta_2)=-\mu(\theta_1\wedge\theta_2).$$

    Taking the interior product of the above equation with $\theta_1^\sharp$ and $\theta_2^\sharp$ gives us that $\theta_1$ and $\theta_2$ are isotropic and orthogonal, since $\theta_1$ and $\theta_2$ are linearly independent (otherwise $\omega=0$).\medskip
    
    Since $\alpha^{(0)}=0$, taking $\beta=1$ in $\alpha\diamond\beta\diamond\alpha=4(\alpha\diamond\beta)^{(0)}\alpha$ does not suffice to characterize the signed square of the spinor $\xi$. Choose a basis $\{\theta_1,\theta_2,\vartheta_1,\vartheta_2\}$ of $V^*$ given by isotropic one-forms that are conjugate in pairs, that is, they are mutually orthogonal except for: $$\escal{\theta_1,\vartheta_1}=\escal{\theta_2,\vartheta_2}=1.$$

    Taking $\beta=\vartheta_1\wedge\vartheta_2\in\Lambda^2V^*$ we obtain: $$(\alpha\diamond\beta)^{(0)}=-\escal{\alpha,\beta}=-\escal{\theta_1\wedge\theta_2,\vartheta_1\wedge\vartheta_2}=-\det\begin{pmatrix}
        \escal{\theta_1,\vartheta_1}&\escal{\theta_1,\vartheta_2}\\
        \escal{\theta_2,\vartheta_1}&\escal{\theta_2,\vartheta_2}
    \end{pmatrix}=-1.$$

    Using $\theta_i\diamond\vartheta_j+\vartheta_j\diamond\theta_i=2\delta_{ij}$ for $i,j=1,2$ we compute: $$\alpha\diamond\beta=-4+2\vartheta_1\diamond\theta_1+2\vartheta_2\diamond\theta_2+\beta\diamond\alpha.$$
    
    Multiplying on the right by $\alpha$, and using $\alpha\diamond\alpha=0$, we conclude that $\alpha\diamond\beta\diamond\alpha=-4\alpha$.
\end{proof}

\subsection{Signature \texorpdfstring{$(3,3)$}{(3,3)}}

Let $(V,h)$ be a quadratic vector space of signature $(3,3)$ and let $(\Sigma,\gamma)$ be an irreducible real Clifford module for $(V^*,h^*)$. Denote by $\Sigma=\Sigma^+\oplus\Sigma^-$ the chiral decomposition of $\Sigma\cong\R^8$ with respect to the volume form $\nu$ of $(V,h)$. We equip $(\Sigma,\gamma)$ with a skew-symmetric admissible bilinear pairing $\mathscr{B}$ of positive adjoint type, see Proposition \ref{prop:admissible_bilinear_pairings}. By Corollary \ref{cor:characterization_chiral_spinorial_forms}, an exterior form $\alpha\in\Lambda V^*$ is the signed square of a chiral spinor $\xi\in\Sigma^\mu$ of chirality $\mu\in\Z_2$ if and only if: $$\alpha\diamond\alpha=8\alpha^{(0)}\alpha,\qquad \alpha\diamond\beta\diamond\alpha=8(\alpha\diamond\beta)^{(0)}\alpha,\qquad\tau(\alpha)=-\alpha,\qquad*(\pi\circ\tau)(\alpha)=\mu\alpha$$ for an exterior form $\beta\in\Lambda V^*$ satisfying $(\alpha\diamond\beta)^{(0)}\neq0$.

\begin{proposition}\label{prop:square_(3,3)}
    Let $(V,h)$ be a quadratic vector space of signature $(3,3)$. An exterior form $\alpha\in\Lambda V^*$ is the signed square of an irreducible real chiral spinor $\xi\in\Sigma^\mu$ of chirality $\mu\in\Z_2$ if and only if: $$\alpha=\theta_1\wedge\theta_2\wedge\theta_3$$ where $\theta_1,\theta_2,\theta_3\in V^*$ satisfy $*(\theta_1\wedge\theta_2\wedge\theta_3)=\mu(\theta_1\wedge\theta_2\wedge\theta_3)$.
\end{proposition}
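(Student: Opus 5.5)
The plan is to follow the pattern of Propositions \ref{prop:square_(1,1)} and \ref{prop:square_(2,2)}, using the characterization in Corollary \ref{cor:characterization_chiral_spinorial_forms} with $d=6$, $\sigma=-1$, $s=+1$.

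\textbf{Step 1: the linear conditions.} Write $\alpha=\sum_{k=0}^6\alpha^{(k)}$. Since $\tau$ acts on $\Lambda^k$ by $(-1)^{\binom{k}{2}}$, the condition $\tau(\alpha)=-\alpha$ kills the components of degrees $0,1,4,5$. This leaves degrees $2,3,6$.

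The chirality condition $*(\pi\circ\tau)(\alpha)=\mu\alpha$ pairs degree $k$ with degree $6-k$, with the sign $(-1)^{\binom{k+1}{2}}$:
\begin{itemize}
\item it relates $\alpha^{(2)}$ to $\alpha^{(4)}=0$, so $\alpha^{(2)}=0$;
\item it relates $\alpha^{(6)}$ to $\alpha^{(0)}=0$, so $\alpha^{(6)}=0$;
\item on degree $3$, since $(-1)^{6}=1$, it reads $*\alpha^{(3)}=\mu\alpha^{(3)}$.
\end{itemize}
Hence $\alpha=\rho\in\Lambda^3V^*$ with $*\rho=\mu\rho$.

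\textbf{Step 2: the quadratic equation $\rho\diamond\rho=0$.} Expand with the generalized products. Here $\rho\triangle_k\rho$ vanishes for $a-k$ odd, i.e.\ $k=0,2$, so
$$\rho\diamond\rho=\pm\rho\triangle_1\rho\pm\escal{\rho,\rho}.$$
Self-duality gives $\escal{\rho,\rho}\nu=\rho\wedge*\rho=\mu\,\rho\wedge\rho=0$, so $\rho$ is null. The remaining condition $\rho\triangle_1\rho=0$ is a $4$-form equation,
$$h^{ij}\,\iota_{e_i}\rho\wedge\iota_{e_j}\rho=0 .$$

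\textbf{Step 3 (main obstacle): null self-dual $3$-forms with $\rho\triangle_1\rho=0$ are decomposable.} I would avoid a direct linear-algebra proof and argue by dimension and orbit counting instead.
\begin{itemize}
\item $\mathrm{Spin}(3,3)\cong\mathrm{SL}(4,\R)$ and $\Sigma^\mu\cong\R^4$ is its standard representation. So $\mathrm{Spin}_0$ acts transitively on $\Sigma^\mu\setminus\{0\}$.
\item By Proposition \ref{prop:equivariance_quadratic_map}, all nonzero signed squares (for fixed $\kappa$) form a single orbit, and these forms satisfy the equations of Step 2.
\item Conversely, by Corollary \ref{cor:characterization_chiral_spinorial_forms}, a $\rho$ satisfying the linear conditions and $\rho\diamond\rho=0$ is a square once we also exhibit $\beta$ with $(\rho\diamond\beta)^{(0)}\neq0$ and $\rho\diamond\beta\diamond\rho=8(\rho\diamond\beta)^{(0)}\rho$. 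So it suffices to verify both directions on one model decomposable form and then use equivariance.
\end{itemize}

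\textbf{Step 4: check the model form.} Take a Witt basis $\{\theta_i,\vartheta_i\}$ with $\escal{\theta_i,\vartheta_j}=\delta_{ij}$ and all other pairings zero, and set $\rho=\theta_1\wedge\theta_2\wedge\theta_3$. Then $\theta_1,\theta_2,\theta_3$ span a maximal isotropic subspace, and:
\begin{itemize}
\item interior products with $\theta_j^\sharp$ vanish on $\rho$, which gives $*\rho=\pm\rho$; the sign is fixed by choosing the orientation of the triple;
\item $\rho\diamond\rho=\theta_1\diamond\theta_2\diamond\theta_3\diamond\theta_1\diamond\theta_2\diamond\theta_3=0$, because the $\theta_i$ anticommute and square to zero.
\end{itemize}
Choose $\beta=\vartheta_3\wedge\vartheta_2\wedge\vartheta_1$ (ordered so that $\rho\diamond\beta$ has nonzero scalar part, e.g.\ $\pm1$). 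Using $\theta_i\diamond\vartheta_j+\vartheta_j\diamond\theta_i=2\delta_{ij}$ repeatedly, as in the $(2,2)$ case, one gets $\rho\diamond\beta\diamond\rho=8(\rho\diamond\beta)^{(0)}\rho$. The reason is that $\rho\diamond\beta\diamond\rho$ equals $\rho$ times the product of the three factors $\theta_i\diamond\vartheta_i\diamond\theta_i=2\theta_i$.

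\textbf{Step 5: the forward direction.} To show that every square has this form, note that the orbit of the model form under $\mathrm{SO}_0(3,3)$ consists of decomposable forms. Since every nonzero square lies in a single orbit and the model form is a square, every square is $\pm$ a decomposable form.

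The remaining delicate point is the converse: a general decomposable $\theta_1\wedge\theta_2\wedge\theta_3$ with $*\rho=\mu\rho$ must be a square. Duality forces the $\theta_i$ to be isotropic and mutually orthogonal: contract the duality equation with $\theta_i^\sharp$, exactly as in Proposition \ref{prop:square_(2,2)}. Maximal isotropic subspaces of a given duality type form one $\mathrm{SO}_0$-orbit, and rescaling within that orbit (via $\mathrm{GL}^+$ of the subspace) realizes every positive multiple. The sign ambiguity is absorbed by $\kappa$. So the converse reduces to the Step 4 computation.
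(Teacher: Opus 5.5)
Your proof is essentially correct, but for the forward direction it takes a different route from the paper.

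\textbf{Comparison with the paper.} The paper stays inside the algebraic characterization. For a genuine square, the sandwich identity holds for \emph{every} $\beta$. Taking $\beta=v^\flat$, one has $(\rho\diamond v^\flat)^{(0)}=0$. Using $\rho\diamond v^\flat+v^\flat\diamond\rho=2\iota_v\rho$ and $\rho\diamond\rho=0$, this gives $(\iota_v\rho)\diamond\rho=0$, hence $(\iota_v\rho)\wedge\rho=0$ for all $v\in V$. So $\rho$ is decomposable by the Plücker relations. Duality then makes the $\theta_i$ isotropic and orthogonal. The converse is your Step 4 computation, carried out in a Witt basis adapted to the given $\rho$; the paper uses $\beta=\vartheta_1\wedge\vartheta_2\wedge\vartheta_3$, with scalar part $-1$.

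You instead get decomposability of all squares from three ingredients: $\mathrm{Spin}_0(3,3)\cong\mathrm{SL}(4,\R)$, transitivity on $\Sigma^\mu\setminus\{0\}$, and equivariance applied to one model square. This is quick and shows at once that every chiral spinor is pure for $p=3$. However, it imports a low-dimensional accident. In signature $(4,4)$ transitivity already fails (isotropic versus non-isotropic spinors), whereas the paper's Plücker mechanism is exactly what is reused in the isotropic $(4,4)$ case.

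Your orbit argument for the converse is unnecessary. Once duality has made the $\theta_i$ isotropic and orthogonal, complete them to a Witt basis and run Step 4 directly on the given $\rho$.

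\textbf{Two statements to correct.}

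\emph{First, the headline of Step 3 is false.} For every $3$-form, $*\rho=\nu\diamond\rho=-\rho\diamond\nu$. So any $\rho$ with $*\rho=\mu\rho$ satisfies $\rho\diamond\rho=(*\rho)\diamond(*\rho)=(-\rho\diamond\nu)\diamond(\nu\diamond\rho)=-\rho\diamond\rho$. Thus $\rho\diamond\rho=0$, $\escal{\rho,\rho}=0$ and $\rho\triangle_1\rho=0$ hold automatically, as the paper notes. Yet $\theta_1\wedge\theta_2\wedge\theta_3+\vartheta_1\wedge\vartheta_2\wedge\theta_3$ is a counterexample:
\begin{itemize}
\item it has the same duality sign as $\theta_1\wedge\theta_2\wedge\theta_3$, since its second term is the image of the first under the orientation-preserving isometry swapping $\theta_1\leftrightarrow\vartheta_1$ and $\theta_2\leftrightarrow\vartheta_2$;
\item it is not decomposable, since it equals $(\theta_1\wedge\theta_2+\vartheta_1\wedge\vartheta_2)\wedge\theta_3$ with a rank-four two-form factor.
\end{itemize}
Decomposability is encoded in the sandwich identity for all $\beta$ (or, in your approach, in transitivity), not in $\rho\diamond\rho=0$. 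Your Step 5 never uses this claim, so simply drop it.

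\emph{Second, reordering the triple does not change the duality sign.} It only replaces $\rho$ by $-\rho$, so it cannot produce a model of the other chirality. Your forward argument needs a model for both values of $\mu$. Take instead a maximal isotropic subspace from the other family, e.g.\ $\vartheta_1\wedge\theta_2\wedge\theta_3$. It is the image of $\theta_1\wedge\theta_2\wedge\theta_3$ under an orientation-reversing isometry, so it has the opposite duality sign.

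With these two fixes your argument is complete.
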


\begin{proof}
    Let $\alpha=\sum_{k=0}^6\alpha^{(k)}$, with $\alpha^{(k)}\in\Lambda^kV^*$. The linear equation $\tau(\alpha)=-\alpha$ implies that: $$\alpha^{(0)}=\alpha^{(1)}=\alpha^{(4)}=\alpha^{(5)}=0.$$
    
    The chirality equation $*(\pi\circ\tau)(\alpha)=\mu\alpha$ implies that $\alpha^{(2)}=\alpha^{(6)}=0$ and $*\alpha^{(3)}=\mu\alpha^{(3)}$. Set $\rho\coloneqq\alpha^{(3)}\in\Lambda^3V^*$. Then the equation $\alpha\diamond\alpha=8\alpha^{(0)}\alpha$ becomes $\rho\diamond\rho=0$, which can be seen to be automatically satisfied using Proposition \ref{prop:product_volume_form}.\medskip
    
    We now show that $\rho\in\Lambda^3V^*$ is a decomposable three-form. Let $v\in V$ be any vector and take $\beta=v^\flat\in V^*$. Then $(\alpha\diamond\beta)^{(0)}=(\rho\diamond v^\flat)^{(0)}=0$ and the quadratic equation $\alpha\diamond\beta\diamond\alpha=8(\alpha\diamond\beta)^{(0)}\alpha$ becomes: $$(\iota_v\rho)\wedge\rho-(\iota_v\rho)\triangle_1\rho-(\iota_v\rho)\triangle_2\rho=0,$$ where we have used $\rho\diamond v^\flat+v^\flat\diamond\rho=2\iota_v\rho$ and $\rho\diamond\rho=0$. In particular, we have $(\iota_v\rho)\wedge\rho=0$ for all $v\in V$, hence $\rho$ is decomposable by classical Plücker relations \cite{Eastwood_Michor2000}. Then we can write: $$\rho=\theta_1\wedge\theta_2\wedge\theta_3$$ for some one-forms $\theta_1,\theta_2,\theta_3\in V^*$. The condition $*\rho=\mu\rho$ translates into: $$*(\theta_1\wedge\theta_2\wedge\theta_3)=\mu(\theta_1\wedge\theta_2\wedge\theta_3).$$
    
    Taking the interior product of the above equation with $\theta_1^\sharp$, $\theta_2^\sharp$, and $\theta_3^\sharp$ gives us that $\theta_1$, $\theta_2$, and $\theta_3$ are isotropic and mutually orthogonal.\medskip

    Since $\alpha^{(0)}=0$, taking $\beta=1$ in $\alpha\diamond\beta\diamond\alpha=8(\alpha\diamond\beta)^{(0)}\alpha$ does not suffice to characterize the signed square of the spinor $\xi$. Choose a basis $\{\theta_1,\theta_2,\theta_3,\vartheta_1,\vartheta_2,\vartheta_3\}$ of $V^*$ given by isotropic one-forms that are conjugate in pairs, that is, they are mutually orthogonal except for: $$\escal{\theta_1,\vartheta_1}=\escal{\theta_2,\vartheta_2}=\escal{\theta_3,\vartheta_3}=1.$$
    
    Taking $\beta=\vartheta_1\wedge\vartheta_2\wedge\vartheta_3\in\Lambda^3V^*$ we obtain $(\alpha\diamond\beta)^{(0)}=-\escal{\alpha,\beta}=-1$. Finally, using: $$\alpha\diamond\beta=-(2-\vartheta_1\diamond\theta_1)\diamond(2-\vartheta_2\diamond\theta_2)\diamond(2-\vartheta_3\diamond\theta_3)$$ we conclude that $\alpha\diamond\beta\diamond\alpha=-8\alpha$.
\end{proof}

\subsection{Signature \texorpdfstring{$(4,4)$}{(4,4)}}

Let $(V,h)$ be a quadratic vector space of signature $(4,4)$ and let $(\Sigma,\gamma)$ be an irreducible real Clifford module for $(V^*,h^*)$. Denote by $\Sigma=\Sigma^+\oplus\Sigma^-$ the chiral decomposition of $\Sigma\cong\R^{16}$ with respect to the volume form $\nu$ of $(V,h)$. We equip $(\Sigma,\gamma)$ with a symmetric admissible bilinear pairing $\mathscr{B}$ of positive adjoint type, see Proposition \ref{prop:admissible_bilinear_pairings}. By Corollary \ref{cor:characterization_chiral_spinorial_forms}, an exterior form $\alpha\in\Lambda V^*$ is the signed square of a chiral spinor $\xi\in\Sigma^\mu$ of chirality $\mu\in\Z_2$ if and only if: $$\alpha\diamond\alpha=16\alpha^{(0)}\alpha,\qquad \alpha\diamond\beta\diamond\alpha=16(\alpha\diamond\beta)^{(0)}\alpha,\qquad\tau(\alpha)=\alpha,\qquad*(\pi\circ\tau)(\alpha)=\mu\alpha$$ for an exterior form $\beta\in\Lambda V^*$ satisfying $(\alpha\diamond\beta)^{(0)}\neq0$.

\begin{proposition}\label{prop:square_(4,4)}
    Let $(V,h)$ be a quadratic vector space of signature $(4,4)$. An exterior form $\alpha\in\Lambda V^*$ is the signed square of an irreducible real chiral spinor $\xi\in\Sigma^\mu$ of chirality $\mu\in\Z_2$ only if: $$\alpha=c+\Phi+\mu c\nu,$$ where $c\in\R$ and $\Phi\in\Lambda^4V^*$ satisfy: \begin{equation}\label{eq:Spin(4,3)-structure_intrinsic}
        *\Phi=\mu\Phi,\qquad \escal{\Phi,\Phi}=14c^2,\qquad \Phi\triangle_2\Phi+12c\Phi=0.
    \end{equation}
    
    If in addition $\mathscr{B}(\xi,\xi)\neq0$, then the above conditions are also sufficient.
\end{proposition}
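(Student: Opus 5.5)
We use the characterization of Corollary \ref{cor:characterization_chiral_spinorial_forms} in the form stated just before the proposition: $\alpha$ is the signed square of a chiral spinor of chirality $\mu$ if and only if $\tau(\alpha)=\alpha$, $*(\pi\circ\tau)(\alpha)=\mu\alpha$, $\alpha\diamond\alpha=16\alpha^{(0)}\alpha$, and $\alpha\diamond\beta\diamond\alpha=16(\alpha\diamond\beta)^{(0)}\alpha$ for some $\beta$ with $(\alpha\diamond\beta)^{(0)}\neq0$.

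\textbf{Linear constraints.} Write $\alpha=\sum_{k=0}^8\alpha^{(k)}$. Since $\tau$ acts on degree $k$ by $(-1)^{\binom{k}{2}}$, the equation $\tau(\alpha)=\alpha$ kills degrees $2,3,6,7$ and leaves $\alpha^{(0)},\alpha^{(1)},\alpha^{(4)},\alpha^{(5)},\alpha^{(8)}$. The chirality equation pairs degree $k$ with degree $8-k$ via $*$, with sign $(-1)^{\binom{k+1}{2}}$. Since degrees $7$ and $3$ vanish, it forces $\alpha^{(1)}=\alpha^{(5)}=0$. It also gives $\alpha^{(8)}=\mu c\nu$ with $c=\alpha^{(0)}$ (using $*1=\nu$) and $*\Phi=\mu\Phi$ for $\Phi=\alpha^{(4)}$. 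Hence $\alpha=c+\Phi+\mu c\nu=(1+\mu\nu)\diamond(c+\tfrac12\Phi)$.

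\textbf{Quadratic equation.} Next we expand $\alpha\diamond\alpha=16c\,\alpha$. Because $\nu$ is central in the even algebra and $(1+\mu\nu)^{\diamond 2}=2(1+\mu\nu)$, this reduces to an identity in $\Lambda^{\mathrm{ev}}V^*$ projected by $1+\mu\nu$. Using the generalized-product expansion with $a=4$,
$$\Phi\diamond\Phi=\sum_k(-1)^{\binom{k+1}{2}}\Phi\triangle_k\Phi.$$
By Proposition \ref{prop:properties_triangles}, the odd-$k$ terms vanish ($a-k$ odd), so
$$\Phi\diamond\Phi=\Phi\wedge\Phi-\Phi\triangle_2\Phi+\langle\Phi,\Phi\rangle.$$
Moreover, self-duality gives $\Phi\wedge\Phi=\langle\Phi,*\Phi\rangle\nu$-type terms, specifically $\Phi\wedge\Phi=\mu\langle\Phi,\Phi\rangle\nu$. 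The term $\Phi\triangle_2\Phi$ is a $4$-form; I expect it to be self-dual of the same type, which I would check using the last item of Proposition \ref{prop:properties_triangles} together with $\nu\diamond$.

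Comparing degrees in
$$(c+\Phi+\mu c\nu)^{\diamond2}=16c(c+\Phi+\mu c\nu),$$
where $(c+\mu c\nu)^{\diamond2}=2c^2(1+\mu\nu)$ and the cross term is $2\cdot 2c\Phi$, gives two equations. Degree $0$: $2c^2+\langle\Phi,\Phi\rangle=16c^2$, i.e.\ $\langle\Phi,\Phi\rangle=14c^2$. Degree $4$: $4c\Phi-\Phi\triangle_2\Phi=16c\Phi$, i.e.\ $\Phi\triangle_2\Phi+12c\Phi=0$. Degree $8$ reproduces the degree-$0$ equation by chirality. This establishes necessity.

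\textbf{Sufficiency under $\mathscr{B}(\xi,\xi)\neq0$.} The key observation is that $\alpha^{(0)}$ is a nonzero multiple of $\mathscr{B}(\xi,\xi)$: the trace of $\xi\otimes\mathscr{B}(-,\xi)$ equals $\mathscr{B}(\xi,\xi)$, and the degree-zero part of a form is $2^{-4}$ times the trace of its image. So the assumption $\mathscr{B}(\xi,\xi)\neq0$ means $c\neq0$, and we may take $\beta=1$, for which $(\alpha\diamond1)^{(0)}=c\neq0$. The condition $\alpha\diamond\beta\diamond\alpha=16c\,\alpha$ is then exactly $\alpha\diamond\alpha=16c\,\alpha$, which we have just shown to be equivalent to the conditions \eqref{eq:Spin(4,3)-structure_intrinsic} together with the linear constraints. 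Part (3) of Corollary \ref{cor:characterization_chiral_spinorial_forms} then applies, and sufficiency follows once the sign convention $\kappa$ is absorbed.

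\textbf{Main obstacle.} The delicate step is the degree-by-degree expansion of $\Phi\diamond\Phi$, in particular confirming the signs $(-1)^{\binom{k+1}{2}+4k}$ for $k=2,4$ and the identity $\Phi\wedge\Phi=\mu\langle\Phi,\Phi\rangle\nu$ for (anti-)self-dual $\Phi$. An error in either would change the constants $14$ and $12$. If necessary I would cross-check these constants on an explicit $\mathrm{Spin}(4,3)$-invariant $4$-form.
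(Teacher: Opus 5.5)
Your proposal is correct and follows essentially the same route as the paper: you solve the linear constraints to get $\alpha=c+\Phi+\mu c\nu$, expand $\alpha\diamond\alpha=2c^2+4c\Phi+2\mu c^2\nu+\Phi\wedge\Phi-\Phi\triangle_2\Phi+\escal{\Phi,\Phi}$ and compare degrees, then use $c=\alpha^{(0)}=\tfrac{\kappa}{16}\mathscr{B}(\xi,\xi)$ so that $\beta=1$ suffices when $\mathscr{B}(\xi,\xi)\neq0$. The signs you flagged as the main obstacle do check out, and your trace argument for $c\propto\mathscr{B}(\xi,\xi)$ is a self-contained substitute for the paper's appeal to the explicit formula in \cite[Prop.\ 3.22]{Cortes_Lazaroiu_Shahbazi2021}.
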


\begin{proof}
    The linear equations $\tau(\alpha)=\alpha$ and $*(\pi\circ\tau)(\alpha)=\mu\alpha$ are immediately solved by: $$\alpha=c+\Phi+\mu c\nu,$$ where $c\in\R$ and $\Phi\in\Lambda^4V^*$ satisfies $*\Phi=\mu\Phi$. Using Proposition \ref{prop:product_volume_form} we compute: $$\alpha\diamond\alpha=2c^2+4c\Phi+2\mu c^2\nu+\Phi\wedge\Phi-\Phi\triangle_2\Phi+\escal{\Phi,\Phi}.$$

    Hence the quadratic equation $\alpha\diamond\alpha=16\alpha^{(0)}\alpha$ becomes: $$\escal{\Phi,\Phi}=14c^2,\qquad \Phi\triangle_2\Phi+12c\Phi=0.$$
    
    From the explicit expression of the signed square of the spinor $\xi$ in terms of an orthonormal basis of $(V^*,h^*)$, see \cite[Prop.\ 3.22]{Cortes_Lazaroiu_Shahbazi2021}, we get that: \begin{equation}\label{eq:alpha0_norm_spinor}
        c=\alpha^{(0)}=\tfrac{\kappa}{16}\mathscr{B}(\xi,\xi),
    \end{equation} where $\kappa\in\Z_2$. Therefore, if $\mathscr{B}(\xi,\xi)\neq0$, taking $\beta=1$ in $\alpha\diamond\beta\diamond\alpha=16(\alpha\diamond\beta)^{(0)}\alpha$ suffices to characterize the signed square of the spinor $\xi$.
\end{proof}

\begin{remark}
    Proposition \ref{prop:square_(4,4)} is analogous to \cite[Lemma 3.19]{Lazaroiu_Shahbazi2024}, where the authors characterize the signed square of an irreducible real chiral spinor in signature $(8,0)$, see also \cite{Harvey1990}.
\end{remark}

Proposition \ref{prop:square_(4,4)} gives us an intrinsic characterization of $\mathrm{Spin}_0(4,3)$-structures.

\begin{proposition}\label{prop:Phi_Spin(4,3)}
    The stabilizer of a four-form $\Phi\in\Lambda^4V^*$ is: $$\mathrm{Spin}_0(4,3)\subset\mathrm{SO}_0(V^*,h^*)\cong\mathrm{SO}_0(4,4)$$ if and only if there exists $c\in\R\setminus\{0\}$ such that equations \eqref{eq:Spin(4,3)-structure_intrinsic} hold.
\end{proposition}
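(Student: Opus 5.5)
The proof uses the correspondence between four-forms satisfying \eqref{eq:Spin(4,3)-structure_intrinsic} and signed squares of non-isotropic chiral spinors given by Proposition \ref{prop:square_(4,4)}, together with Corollary \ref{cor:stabilizers_isomorphic}. It reduces both implications to the classical fact that the stabilizer in $\mathrm{Spin}_0(4,4)$ of a chiral spinor $\xi$ with $\mathscr{B}(\xi,\xi)\neq0$ is $\mathrm{Spin}_0(4,3)$. That subgroup sits in $\mathrm{SO}_0(V^*,h^*)$ via $\mathrm{Ad}$, which is the real spin representation of $\mathrm{Spin}_0(4,3)$ on $\R^8$. I would quote this fact from \cite{Baum_Kath1999,KathHabilitation}. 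Alternatively, it can be derived from triality: $\mathrm{Spin}_0(4,4)$ acts on $\Sigma^\mu\cong\R^{4,4}$ through a vector representation in which $\mathscr{B}$ is the invariant quadratic form. The stabilizer of a non-null vector is then $\mathrm{Spin}_0(4,3)$ or $\mathrm{Spin}_0(3,4)$, and these two groups are isomorphic.

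\emph{Sufficiency.} Let $c\neq0$ and let $\Phi$ satisfy \eqref{eq:Spin(4,3)-structure_intrinsic}, and put $\alpha=c+\Phi+\mu c\nu$.
\begin{enumerate}
\item By construction, $\alpha$ satisfies $\tau(\alpha)=\alpha$ and $*(\pi\circ\tau)(\alpha)=\mu\alpha$.
\item The computation of $\alpha\diamond\alpha$ in the proof of Proposition \ref{prop:square_(4,4)} shows that $\alpha\diamond\alpha=16\alpha^{(0)}\alpha$.
\item Take $\beta=1$. Then $(\alpha\diamond\beta)^{(0)}=c\neq0$, and the remaining quadratic equation of Corollary \ref{cor:characterization_chiral_spinorial_forms}(3) reduces to the one already verified.
\item Hence $\alpha=\mathcal{E}^\kappa_\gamma(\xi)$ for some $\xi\in\Sigma^\mu$. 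By \eqref{eq:alpha0_norm_spinor}, $\mathscr{B}(\xi,\xi)=16\kappa c\neq0$.
\item Since $\mathrm{SO}_0(V^*,h^*)$ fixes $1$ and $\nu$, we have $\mathrm{Stab}(\alpha)=\mathrm{Stab}(\Phi)$.
\end{enumerate}
Corollary \ref{cor:stabilizers_isomorphic} then gives $\mathrm{Stab}(\Phi)\cong\mathrm{Stab}(\xi)\cong\mathrm{Spin}_0(4,3)$, with the stated embedding into $\mathrm{SO}_0(4,4)$.

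\emph{Necessity.} Assume $\mathrm{Stab}(\Phi)=G$ is a copy of $\mathrm{Spin}_0(4,3)$ embedded as above.
\begin{enumerate}
\item Lift $G$ to $\mathrm{Spin}_0(4,4)$. By the triality description, the lift fixes a non-isotropic chiral spinor $\xi$ of some chirality $\mu'$, unique up to scale.
\item Set $\alpha_\xi=c'+\Phi'+\mu' c'\nu$. By Proposition \ref{prop:square_(4,4)}, $\Phi'$ satisfies \eqref{eq:Spin(4,3)-structure_intrinsic} with $c'=\tfrac{\kappa}{16}\mathscr{B}(\xi,\xi)\neq0$.
\item Both $\Phi$ and $\Phi'$ are $G$-invariant. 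I would show that the space of $G$-invariant four-forms is one-dimensional by decomposing $\Lambda^4\R^8$ under the spin representation of $\mathrm{Spin}(7)$, after complexifying to $\mathrm{Spin}(7,\C)$. This gives $\Lambda^4=\mathbf{1}\oplus\mathbf{7}\oplus\mathbf{27}\oplus\mathbf{35}$, which has a single trivial summand.
\item Therefore $\Phi=\lambda\Phi'$ with $\lambda\neq0$. Since $\Phi'$ is an eigenvector of $*$ with eigenvalue $\mu'$, so is $\Phi$.
\item The last two equations in \eqref{eq:Spin(4,3)-structure_intrinsic} are homogeneous of degree two in the pair $(\Phi,c)$. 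Hence $\Phi=\lambda\Phi'$ satisfies them with $c=\lambda c'\neq0$.
\end{enumerate}

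\emph{Main obstacle.} I expect the hard part to be the group-theoretic input: identifying the spinor stabilizer with $\mathrm{Spin}_0(4,3)$, including the identity-component bookkeeping under $\mathrm{Ad}$, and proving that the invariant four-form is unique up to scale. For the latter I would first try the complexification argument. If that proves awkward, a fallback is to compute the $\mathfrak{spin}(4,3)$-invariants directly on $\Lambda^4V^*$ using an explicit normal form for $\Phi'$ in an adapted basis.
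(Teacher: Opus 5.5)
Your proposal is correct and follows the paper's proof essentially step for step. For sufficiency you use Proposition \ref{prop:square_(4,4)}, the spinor-stabilizer fact, Corollary \ref{cor:stabilizers_isomorphic} and $\mathrm{Stab}(\Phi)=\mathrm{Stab}(\alpha)$; for necessity you lift, realize the lift as a spinor stabilizer, and use one-dimensionality of the invariant four-forms. The only difference is cosmetic: you transfer the equations to $\Phi=\lambda\Phi'$ by their quadratic homogeneity in $(\Phi,c)$, whereas the paper rescales $\xi$ and chooses $\kappa$ so that $\Phi$ is exactly the degree-four part of the square, which is equivalent.
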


\begin{proof}
    Let $\Phi\in\Lambda^4V^*$ and $c\in\R\setminus\{0\}$ be such that equations \eqref{eq:Spin(4,3)-structure_intrinsic} hold. By Proposition \ref{prop:square_(4,4)}, $\alpha=c+\Phi+\mu c\nu$ is the signed square of a non-isotropic chiral spinor $\xi\in\Sigma^\mu$ of chirality $\mu\in\Z_2$, that is, $\alpha=\mathcal{E}^\kappa_\gamma(\xi)$ for some $\kappa\in\Z_2$. By \cite{Baum_Kath1999,Bhoja_Krasnov2022_II,Bryant2000}, such a spinor $\xi$ is stabilized by: $$\mathrm{Spin}_0(4,3)\subset\mathrm{Spin}_0(V^*,h^*)\cong\mathrm{Spin}_0(4,4).$$
    
    Since the signed spinor square maps $\mathcal{E}^\kappa_\gamma\colon\Sigma\to\Lambda V^*$ are $\mathrm{Spin}_0(V^*,h^*)$-equivariant, Corollary \ref{cor:stabilizers_isomorphic} implies that $\alpha$ is stabilized by $\mathrm{Spin}_0(4,3)\subset\mathrm{SO}_0(V^*,h^*)$. The action of $\mathrm{SO}_0(V^*,h^*)$ on $\Lambda V^*$ preserves the decomposition by degree, and $\mathrm{SO}_0(V^*,h^*)$ acts trivially on $\Lambda^0V^*=\R$ and $\Lambda^8V^*\cong\R$. Hence, the stabilizer of $\Phi$ is the same as the stabilizer of $\alpha$. In other words: $$\mathrm{Stab}(\Phi)=\mathrm{Stab}(\alpha)\cong\mathrm{Stab}(\xi)=\mathrm{Spin}_0(4,3).$$
    
    For the converse, assume that the four-form $\Phi$ is stabilized by $\mathrm{Spin}_0(4,3)\subset\mathrm{SO}_0(V^*,h^*)$. Then, we can lift the group $\mathrm{Spin}_0(4,3)\subset\mathrm{SO}_0(V^*,h^*)$ via the double cover morphism $\mathrm{Ad}\colon\mathrm{Spin}_0(V^*,h^*)\to\mathrm{SO}_0(V^*,h^*)$ to a group $\mathrm{Spin}_0(4,3)\subset\mathrm{Spin}_0(V^*,h^*)$. By \cite{Baum_Kath1999,Bhoja_Krasnov2022_II,Bryant2000}, this group can be realized as the stabilizer of a non-isotropic chiral spinor $\xi\in\Sigma^\mu$, which in turn defines a four-form as the degree four component $\mathcal{E}^\kappa_\gamma(\xi)^{\smash{(4)}}$ of its square $\mathcal{E}^\kappa_\gamma(\xi)\in\Lambda V^*$. By equivariance of the signed spinor square maps, it follows that $\mathcal{E}^\kappa_\gamma(\xi)^{\smash{(4)}}$ and $\Phi$ are preserved by the same group $\mathrm{Spin}_0(4,3)\subset\mathrm{SO}_0(V^*,h^*)$, thus they must be proportional since the space of $\mathrm{Spin}_0(4,3)$-invariant four-forms is one-dimensional, see \cite{Bryant1987}. Therefore, rescaling $\xi$ if necessary and choosing the sign $\kappa\in\Z_2$ appropriately, we obtain that: $$\Phi=\mathcal{E}^\kappa_\gamma(\xi)^{\smash{(4)}}.$$
    
    Setting $c\coloneqq\mathcal{E}^\kappa_\gamma(\xi)^{\smash{(0)}}=\frac{\kappa}{16}\mathscr{B}(\xi,\xi)\neq0$ we conclude by Proposition \ref{prop:square_(4,4)}.
\end{proof}

Given an irreducible real spinor $\xi\in\Sigma$, consider now the following equation: $$F\cdot\xi\coloneqq\Psi_\gamma(F)(\xi)=0$$ for a two-form $F\in\Lambda^2V^*$. This equation models the spinorial instanton condition for a connection on a principal bundle.

\begin{proposition}\label{prop:F.xi=0_Spin(4,3)}
    Let $\alpha=c+\Phi+\mu c\nu$ be the signed square of a non-isotropic chiral spinor $\xi\in\Sigma^\mu$ of chirality $\mu\in\Z_2$ in signature $(4,4)$. A two-form $F\in\Lambda^2V^*$ satisfies $F\cdot\xi=0$ if and only if: $$F\triangle_1\Phi=0,\qquad F\wedge\Phi-\mu c*F=0.$$
\end{proposition}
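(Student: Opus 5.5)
By Lemma \ref{lemma:constrained_spinorial_forms} applied to $Q=\Psi_\gamma(F)$, whose dequantization is $\mathfrak{q}=F$, the condition $F\cdot\xi=0$ is equivalent to the single equation
$$F\diamond\alpha=0,\qquad \alpha=c+\Phi+\mu c\nu.$$
The plan is to expand this product with the generalized products and split it by degree. The key input is that a two-form multiplies a form of degree $a$ through $\triangle_0,\triangle_1,\triangle_2$ only, with the sign rule $(-1)^{\binom{k+1}{2}+2k}=(-1)^{\binom{k+1}{2}}$. This gives
$$F\diamond\beta=F\wedge\beta-F\triangle_1\beta-F\triangle_2\beta.$$

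The three pieces of $\alpha$ are handled separately.

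\begin{itemize}
\item The constant term contributes $cF$ in degree $2$.
\item For the volume form, Proposition \ref{prop:product_volume_form} gives $F\diamond\nu=*\tau(F)=-*F$. So $\mu c\nu$ contributes $-\mu c*F$ in degree $6$.
\item For $\Phi$, the term $F\wedge\Phi$ lies in degree $6$ and $F\triangle_1\Phi$ in degree $4$. By Proposition \ref{prop:properties_triangles}, $F\triangle_2\Phi$ lies in degree $2$, and since $*\Phi=\mu\Phi$ we have $F\triangle_2\Phi=F\triangle_2(\mu*\Phi)=\mu*(\Phi\wedge F)$ when the degrees allow.
\end{itemize}

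Collecting terms, $F\diamond\alpha=0$ is equivalent to the three equations
$$cF-F\triangle_2\Phi=0,\qquad F\triangle_1\Phi=0,\qquad F\wedge\Phi-\mu c*F=0,$$
in degrees $2$, $4$ and $6$ respectively. The formula $\alpha\triangle_a(*\beta)=*(\beta\wedge\alpha)$ needs $a+b\le d$. Here we are computing $F\triangle_2(*\Phi)$ with $a=2$, $b=4$, so this holds.

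It remains to show that the degree-$2$ equation is redundant. Using the identity above, $F\triangle_2\Phi=\mu*(\Phi\wedge F)=\mu*(F\wedge\Phi)$. Taking $\mu*$ of the degree-$6$ equation gives $\mu*(F\wedge\Phi)=c**F$. In signature $(4,4)$ we have $**=+1$ on even forms, so $F\triangle_2\Phi=cF$, and the degree-$2$ equation follows from the degree-$6$ one. Conversely, since $*$ is an isomorphism, the degree-$2$ equation also implies the degree-$6$ one, so the two are in fact equivalent. The system therefore reduces to the two stated conditions, proving both directions at once.

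The main obstacle is the bookkeeping of signs. This includes the sign in $F\diamond\nu$, the sign convention of $\triangle_k$ for $a=2$, and the values of $**$ and of $\tau$ on $2$-forms in neutral signature. A wrong sign would turn the redundancy into an incompatibility such as $cF=-cF$, so I would verify these signs on a simple example, for instance $F=e^1\wedge e^2$ with $\Phi$ built from a null basis.
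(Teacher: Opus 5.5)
Your proposal is correct and follows the paper's proof essentially step for step. You reduce $F\cdot\xi=0$ to $F\diamond(c+\Phi+\mu c\nu)=0$ via Lemma \ref{lemma:constrained_spinorial_forms}, expand and split by degree, and then use $F\triangle_2\Phi=\mu*(F\wedge\Phi)$ together with the Hodge star to show that the degree-$2$ and degree-$6$ equations are equivalent. Your explicit check that $**=+1$ on even forms in signature $(4,4)$ just spells out what the paper leaves implicit.
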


\begin{proof}
    By Lemma \ref{lemma:constrained_spinorial_forms}, we have $F\cdot\xi=0$ if and only if $F\diamond(c+\Phi+\mu c\nu)=0$. We compute: $$F\diamond(c+\Phi+\mu c\nu)=cF+F\wedge\Phi-F\triangle_1\Phi-F\triangle_2\Phi-\mu c*F=0.$$

    Separating by degrees we obtain: $$cF-F\triangle_2\Phi=0,\qquad F\triangle_1\Phi=0,\qquad F\wedge\Phi-\mu c*F=0.$$
    
    Note that the first and last equations are equivalent. Indeed, using $*\Phi=\mu\Phi$ and Proposition \ref{prop:properties_triangles} we get: $$F\triangle_2\Phi=\mu F\triangle_2(*\Phi)=\mu*(\Phi\wedge F)=\mu*(F\wedge\Phi).$$
    
    Hence, taking the Hodge star operator of the first equation yields the last equation.
\end{proof}

We can normalize $\xi\in\Sigma^\mu$ by $\mathscr{B}(\xi,\xi)=-16\kappa\mu$ to get $c=\frac{\kappa}{16}\mathscr{B}(\xi,\xi)=-\mu\in\Z_2$, see \eqref{eq:alpha0_norm_spinor}. For such normalized spinor, we obtain the algebraic condition for a connection on a principal bundle to be a $\mathrm{Spin}_0(4,3)$-instanton. This is the same condition as in the case of a $\mathrm{Spin}(7)$-instanton.

\begin{corollary}
    Let $\alpha=-\mu+\Phi-\nu$ be the signed square of a non-isotropic chiral normalized spinor $\xi\in\Sigma^\mu$ of chirality $\mu\in\Z_2$ in signature $(4,4)$. A two-form $F\in\Lambda^2V^*$ satisfies $F\cdot\xi=0$ if and only if: $$*(F\wedge\Phi)=-F.$$
\end{corollary}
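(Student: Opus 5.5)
This is a direct specialization of Proposition \ref{prop:F.xi=0_Spin(4,3)}. With the normalization $\mathscr{B}(\xi,\xi)=-16\kappa\mu$, equation \eqref{eq:alpha0_norm_spinor} gives $c=-\mu$. Then $\mu c\nu=-\mu^2\nu=-\nu$, so the signed square is indeed $\alpha=-\mu+\Phi-\nu$.

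By Proposition \ref{prop:F.xi=0_Spin(4,3)}, the condition $F\cdot\xi=0$ is equivalent to the pair
$$F\triangle_1\Phi=0,\qquad F\wedge\Phi-\mu c*F=0.$$
Since $\mu c=-\mu^2=-1$, the second equation becomes $F\wedge\Phi+*F=0$. Apply $*$ to it. On a two-form in signature $(4,4)$ we have $**=(-1)^{k(d-k)}\cdot(-1)^{q}$ with $q=4$ negative directions, which gives $**F=F$. Hence the second equation is equivalent to $*(F\wedge\Phi)=-F$.

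The main obstacle is the first condition $F\triangle_1\Phi=0$, which has to be shown to be implied by $*(F\wedge\Phi)=-F$. The plan is as follows.

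\begin{enumerate}
\item Decompose $\Lambda^2V^*\cong\mathfrak{so}(4,4)=\mathfrak{spin}(4,3)\oplus\mathfrak{m}$, where $\mathfrak{m}\cong\R^7$. This mirrors the $\mathrm{Spin}(7)$ splitting $\Lambda^2=\Lambda^2_{21}\oplus\Lambda^2_7$.
\item Use that $F\triangle_1\Phi$ is, up to a constant, the infinitesimal action of $F$ on $\Phi$. Therefore $F\triangle_1\Phi=0$ if and only if $F\in\mathfrak{stab}(\Phi)=\mathfrak{spin}(4,3)$, by Proposition \ref{prop:Phi_Spin(4,3)}.
\item Note that $F\mapsto*(F\wedge\Phi)$ is $\mathrm{Spin}_0(4,3)$-equivariant. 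By Schur's lemma it acts by a scalar on each of the two irreducible summands.
\item Identify the two eigenvalues by evaluating on one element of each summand. The trace identity $\mathrm{tr}=0$ together with the quadratic relation coming from $\Phi\triangle_2\Phi+12c\Phi=0$ fixes them as $-1$ on $\mathfrak{spin}(4,3)$ (dimension $21$) and $3$ on $\mathfrak{m}$ (dimension $7$). Indeed $21\cdot(-1)+7\cdot3=0$.
\item Conclude that the $-1$-eigenspace of $F\mapsto*(F\wedge\Phi)$ is exactly $\mathfrak{spin}(4,3)$. So the single equation $*(F\wedge\Phi)=-F$ already forces $F\triangle_1\Phi=0$, and both conditions collapse to it.
\end{enumerate}

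An alternative route avoids representation theory. The spinors annihilated by the $-1$-eigenspace directly satisfy $F\diamond\alpha=0$, and one checks that the degree-$4$ part of $F\diamond\alpha$ vanishes using $\Phi\triangle_2\Phi=12\mu\Phi$.
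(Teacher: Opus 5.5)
Your proposal is correct and follows essentially the same route as the paper. You reduce via Proposition~\ref{prop:F.xi=0_Spin(4,3)}, rewrite the degree-six condition as $*(F\wedge\Phi)=-F$, identify its solution space with $\mathfrak{spin}(4,3)=\mathfrak{stab}(\Phi)$, and use $F\triangle_1\Phi=F\cdot\Phi$ to show the remaining equation is automatic.

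The only real difference is the identification $\{\beta\mid *(\Phi\wedge\beta)=-\beta\}\cong\mathfrak{spin}(4,3)$: the paper takes it from the $\mathrm{Spin}(7)$ analogy, whereas your Schur/trace argument derives it. Indeed $L(F)=*(F\wedge\Phi)=\mu F\triangle_2\Phi$ satisfies $L^2=2L+3$ once $c=-\mu$, which gives eigenvalue $-1$ on the $21$-dimensional summand and $3$ on the $7$-dimensional one. Your closing ``alternative route'' adds nothing as written, because the degree-four part of $F\diamond\alpha$ is exactly $-F\triangle_1\Phi$, the very quantity you need to show vanishes.
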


\begin{proof}
    By Proposition \ref{prop:Phi_Spin(4,3)}, the four-form $\Phi$ is a $\mathrm{Spin}_0(4,3)$-structure. Hence, we have a decomposition of the space of forms into irreducible components with respect to the action of $\mathrm{Spin}_0(4,3)$ analogous to the $\mathrm{Spin}(7)$ case, see \cite{Bryant1987}. In particular, the second equation of Proposition \ref{prop:F.xi=0_Spin(4,3)}, which now can be written as $*(F\wedge\Phi)=-F$, implies that: $$F\in\Lambda^2_{21}=\{\beta\in\Lambda^2V^*\mid*(\Phi\wedge\beta)=-\beta\}\cong\mathfrak{spin}(4,3).$$
    
    We next show that the other equation in Proposition \ref{prop:F.xi=0_Spin(4,3)} is automatically satisfied. Indeed, since: $$\mathrm{Spin}_0(4,3)=\mathrm{Stab}(\Phi)=\{B\in\mathrm{SO}_0(V^*,h^*)\mid B(\Phi)=\Phi\}$$ by definition, we have that: $$\mathfrak{spin}(4,3)=\{\beta\in\mathfrak{so}(V^*,h^*)\cong\Lambda^2V^*\mid\beta\cdot\Phi=0\},$$ where $\beta\cdot\Phi\in\Lambda^4V^*$ is defined by: $$(\beta\cdot\Phi)(u,v,w,z)\coloneqq\Phi(\beta u,v,w,z)+\Phi(u,\beta v,w,z)+\Phi(u,v,\beta w,z)+\Phi(u,v,w,\beta z),$$ which is expressed in terms of a basis $\{e_1,\ldots,e_8\}$ of $V$ as: \begin{align*}
        \beta\cdot\Phi&=\tfrac{1}{4!}(\beta\cdot\Phi)_{ijkl}e^{ijkl}\\
        &=\tfrac{1}{4!}(\beta_i^m\Phi_{mjkl}+\beta_j^m\Phi_{imkl}+\beta_k^m\Phi_{ijml}+\beta_l^m\Phi_{ijkm})e^{ijkl}\\
        &=\tfrac{1}{3!}\beta_i^m\Phi_{mjkl}e^{ijkl}.
    \end{align*}

    Since $F\in\Lambda^2_{21}\cong\mathfrak{spin}(4,3)$, we have: $$F\triangle_1\Phi=h^{pm}\iota_{e_p}F\wedge\iota_{e_m}\Phi=\tfrac{1}{3!}h^{pm}F_{pi}\Phi_{mjkl}e^{ijkl}=\tfrac{1}{3!}F^m_i\Phi_{mjkl}e^{ijkl}=F\cdot\Phi=0.$$
    
    Hence, $F\triangle_1\Phi=0$ identically.
\end{proof}

Proposition \ref{prop:square_(4,4)} does not give sufficient conditions for an exterior form to be the signed square of an isotropic chiral spinor $\xi$, it only provides necessary conditions. Therefore, the isotropic case, namely the case where $\mathscr{B}(\xi,\xi)=0$, needs to be considered separately.

\begin{proposition}\label{prop:isotropic_square_(4,4)}
    Let $(V,h)$ be a quadratic vector space of signature $(4,4)$. An exterior form $\alpha\in\Lambda V^*$ is the signed square of an isotropic chiral spinor $\xi\in\Sigma^\mu$ of chirality $\mu\in\Z_2$ if and only if: $$\alpha=\theta_1\wedge\theta_2\wedge\theta_3\wedge\theta_4$$ where $\theta_1,\theta_2,\theta_3,\theta_4\in V^*$ satisfy $*(\theta_1\wedge\theta_2\wedge\theta_3\wedge\theta_4)=\mu(\theta_1\wedge\theta_2\wedge\theta_3\wedge\theta_4)$.
\end{proposition}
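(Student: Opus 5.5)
I will follow the pattern of Propositions \ref{prop:square_(2,2)} and \ref{prop:square_(3,3)}, starting from the characterization in Corollary \ref{cor:characterization_chiral_spinorial_forms} specialized to signature $(4,4)$.

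\emph{Necessity.} By Proposition \ref{prop:square_(4,4)} we already know that $\alpha=c+\Phi+\mu c\nu$, with $*\Phi=\mu\Phi$. Isotropy together with \eqref{eq:alpha0_norm_spinor} gives $c=\tfrac{\kappa}{16}\mathscr{B}(\xi,\xi)=0$. Hence $\alpha=\Phi$ is a self-dual (for $\mu$) four-form, and the quadratic equations of Proposition \ref{prop:square_(4,4)} reduce to $\escal{\Phi,\Phi}=0$ and $\Phi\triangle_2\Phi=0$. In fact the full identity $\Phi\diamond\Phi=0$ holds, so $\Phi\wedge\Phi=0$ as well.

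These conditions do not yet give decomposability. For that I will use the cubic identity $\alpha\diamond\beta\diamond\alpha=16(\alpha\diamond\beta)^{(0)}\alpha$, which holds for every $\beta$ by part (2) of Corollary \ref{cor:characterization_chiral_spinorial_forms}. I take $\beta=v^\flat$ for an arbitrary $v\in V$, exactly as in Proposition \ref{prop:square_(3,3)}. Then $(\Phi\diamond v^\flat)^{(0)}=0$, and from $\Phi\diamond v^\flat=-v^\flat\diamond\Phi+2\iota_v\Phi$ (valid since $\Phi$ has even degree) and $\Phi\diamond\Phi=0$ we get
\[
\Phi\diamond v^\flat\diamond\Phi=2(\iota_v\Phi)\diamond\Phi=0.
\]
The top-degree (degree $7$) component of this product is $(\iota_v\Phi)\wedge\Phi$, so $(\iota_v\Phi)\wedge\Phi=0$ for all $v\in V$. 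By the Plücker relations \cite{Eastwood_Michor2000}, $\Phi$ is decomposable, $\Phi=\theta_1\wedge\theta_2\wedge\theta_3\wedge\theta_4$.

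Next I contract $*\Phi=\mu\Phi$ with each $\theta_i^\sharp$. Since $\iota_{\theta_i^\sharp}*\Phi=\pm*(\Phi\wedge\theta_i)=0$, we get $\iota_{\theta_i^\sharp}\Phi=0$. This forces the $\theta_i$ to be isotropic and mutually orthogonal, i.e.\ they span a maximal isotropic subspace, given that $\Phi\neq0$ (which holds because $\xi\neq0$).

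\emph{Sufficiency.} Given $\alpha=\theta_1\wedge\cdots\wedge\theta_4$ with $*\alpha=\mu\alpha$, the $\theta_i$ are null and mutually orthogonal by the contraction argument above. They therefore anticommute in the Clifford algebra and square to zero, so $\alpha=\theta_1\diamond\theta_2\diamond\theta_3\diamond\theta_4$. It follows that $\alpha\diamond\alpha=0=16\alpha^{(0)}\alpha$, and $\tau(\alpha)=(-1)^{\binom42}\alpha=\alpha$. Chirality holds because $*(\pi\circ\tau)(\alpha)=*\alpha=\mu\alpha$.

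To verify the remaining condition I complete the $\theta_i$ to a basis by conjugate isotropic $\vartheta_i$ and take $\beta=\vartheta_1\wedge\vartheta_2\wedge\vartheta_3\wedge\vartheta_4$. Then $(\alpha\diamond\beta)^{(0)}=\escal{\alpha,\beta}=\pm1\neq0$, with sign fixed by $\tau$ on degree $4$. As in Proposition \ref{prop:square_(3,3)}, I write
\[
\alpha\diamond\beta=\pm\prod_i\bigl(2-\vartheta_i\diamond\theta_i\bigr),
\]
reordering via $\theta_i\diamond\vartheta_j+\vartheta_j\diamond\theta_i=2\delta_{ij}$. Right-multiplying by $\alpha$ kills every term containing a $\theta_i$ factor at the right end, leaving $\alpha\diamond\beta\diamond\alpha=16(\alpha\diamond\beta)^{(0)}\alpha$. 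Part (3) of Corollary \ref{cor:characterization_chiral_spinorial_forms} then shows that $\alpha$ is the signed square of a chiral spinor of chirality $\mu$. That spinor is isotropic because $\alpha^{(0)}=0$, using \eqref{eq:alpha0_norm_spinor}.

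\emph{Main obstacle.} I expect the hardest part to be the bookkeeping of signs and degrees in two places. The first is extracting the degree-$7$ component in the necessity argument. The second is checking that the constant in $\alpha\diamond\beta\diamond\alpha$ comes out to exactly $16(\alpha\diamond\beta)^{(0)}$, including its sign. If the degree-$7$ extraction gets messy, a fallback is to expand $(\iota_v\Phi)\diamond\Phi$ into generalized products $\triangle_k$ and read off the $\triangle_0$ term.
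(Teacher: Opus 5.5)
Your reduction to $\alpha=\Phi\in\Lambda^4V^*$ with $*\Phi=\mu\Phi$ and $\Phi\diamond\Phi=0$ is fine. So are the isotropy and orthogonality of the $\theta_i$ and the whole sufficiency direction; all of these coincide with the paper. The gap is the decomposability step, and it is not repairable within your choice of test form. For a four-form one has $\iota_v(\Phi\wedge\Phi)=2(\iota_v\Phi)\wedge\Phi$. So the relations $(\iota_v\Phi)\wedge\Phi=0$ for all $v\in V$ say exactly $\Phi\wedge\Phi=0$, which you already had. They are not the Plücker relations for $\Lambda^4V^*$, which involve contractions with trivectors, and the device that works for three-forms in signature $(3,3)$ does not carry over. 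In fact no odd-degree $\beta$ gives any information. Since $\nu\diamond\Phi=\Phi\diamond\nu=\mu\Phi$ and $\nu$ anticommutes with one-forms, $\Phi\diamond v^\flat\diamond\Phi=\mu\,\Phi\diamond\nu\diamond v^\flat\diamond\Phi=-\Phi\diamond v^\flat\diamond\Phi$. Hence the cubic identity with $\beta=v^\flat$ holds identically for every $\Phi$ with $*\Phi=\mu\Phi$.

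Here is a concrete counterexample. Take isotropic one-forms $\theta_i,\vartheta_i$ conjugate in pairs, ordered so that $*(\theta_1\wedge\theta_2\wedge\theta_3\wedge\theta_4)=\mu\,\theta_1\wedge\theta_2\wedge\theta_3\wedge\theta_4$, and set $\Phi=\theta_1\wedge\theta_2\wedge(\theta_3\wedge\theta_4+\vartheta_3\wedge\vartheta_4)$. The two decomposable summands correspond to maximal isotropic subspaces meeting in dimension two, so they have the same duality sign and $*\Phi=\mu\Phi$. Moreover $\Phi\wedge\Phi=0$, $\Phi\triangle_2\Phi=0$ and $\escal{\Phi,\Phi}=0$, hence $\Phi\diamond\Phi=0$. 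Thus $\Phi$ satisfies everything your necessity argument uses, yet it is not decomposable.

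The missing idea is to test with a $\beta$ of even degree, which preserves chirality. The paper takes $\beta=v^\flat\in\Lambda^2V^*$ for an arbitrary bivector $v\in\Lambda^2V$, so that $(\Phi\diamond\beta)^{(0)}=0$. Using $\Phi\diamond\beta+\beta\diamond\Phi=2(\Phi\wedge\beta-\Phi\triangle_2\beta)$ and $\Phi\diamond\Phi=0$, the cubic identity reduces to $(\beta\triangle_2\Phi)\diamond\Phi=0$. One also needs $*(\Phi\wedge\beta)=\mu\,\beta\triangle_2\Phi$, which follows from $*\Phi=\mu\Phi$ and Proposition \ref{prop:properties_triangles}. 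The top-degree part of $(\beta\triangle_2\Phi)\diamond\Phi=0$ is $(\iota_{\tau(v)}\Phi)\wedge\Phi=0$ for all $v\in\Lambda^2V$. These bivector relations, together with $\Phi\wedge\Phi=0$, imply the classical Plücker relations and hence decomposability. In the example above, contracting $\Phi$ with $\vartheta_1^\sharp\wedge\vartheta_2^\sharp$ gives $\theta_3\wedge\theta_4+\vartheta_3\wedge\vartheta_4$. Its wedge with $\Phi$ is $2\,\theta_1\wedge\theta_2\wedge\theta_3\wedge\theta_4\wedge\vartheta_3\wedge\vartheta_4\neq0$, so the bivector test detects it.

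A minor point: for even-degree $\Phi$ the anticommutator is $\Phi\diamond v^\flat+v^\flat\diamond\Phi=2\,v^\flat\wedge\Phi$. The identity you quote is the odd-degree one from the $(3,3)$ case, although here this only changes a sign in $(\iota_v\Phi)\diamond\Phi=0$.
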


\begin{proof}
    Since the chiral spinor $\xi\in\Sigma^\mu$ is isotropic, that is, $\mathscr{B}(\xi,\xi)=0$, we have $\alpha^{(0)}=c=0$ by \eqref{eq:alpha0_norm_spinor}. Then $\alpha=\Phi\in\Lambda^4V^*$ satisfying $*\Phi=\mu\Phi$.\medskip
    
    We now show that $\Phi\in\Lambda^4V^*$ is decomposable. Let $v\in\Lambda^2V$ be any bivector and take $\beta=v^\flat\in\Lambda^2V^*$. Then $(\alpha\diamond\beta)^{(0)}=(\Phi\diamond v^\flat)^{(0)}=0$ and the equation $\alpha\diamond\beta\diamond\alpha=16(\alpha\diamond\beta)^{(0)}\alpha$ becomes: \begin{equation}\label{eq:aux_isotropic_(4,4)}
        (\Phi\wedge v^\flat-\Phi\triangle_2 v^\flat)\diamond\Phi=0,
    \end{equation} where we have used $\Phi\diamond v^\flat+v^\flat\diamond\Phi=2(\Phi\wedge v^\flat-\Phi\triangle_2 v^\flat)$ and $\Phi\diamond\Phi=0$. Using Proposition \ref{prop:properties_triangles} and $*\Phi=\mu\Phi$ we get: $$*(\Phi\wedge v^\flat)=v^\flat\triangle_2(*\Phi)=\mu v^\flat\triangle_2\Phi.$$
    
    Hence: $$(\Phi\wedge v^\flat)\diamond\Phi=\mu(\Phi\wedge v^\flat)\diamond(*\Phi)=\mu(\Phi\wedge v^\flat)\diamond\nu\diamond\Phi=-\mu*(\Phi\wedge v^\flat)\diamond\Phi=-(v^\flat\triangle_2\Phi)\diamond\Phi.$$
    
    Then \eqref{eq:aux_isotropic_(4,4)} becomes: $$(v^\flat\triangle_2\Phi)\diamond\Phi=(v^\flat\triangle_2\Phi)\wedge\Phi-(v^\flat\triangle_2\Phi)\triangle_1\Phi-(v^\flat\triangle_2\Phi)\triangle_2\Phi=0.$$
    
    In particular, we have $(v^\flat\triangle_2\Phi)\wedge\Phi=0$ for all $v\in\Lambda^2 V$, hence $\Phi$ is decomposable by classical Plücker relations \cite{Eastwood_Michor2000} since $v^\flat\triangle_2\Phi=\iota_{\tau(v)}\Phi$. Then we can write: $$\Phi=\theta_1\wedge\theta_2\wedge\theta_3\wedge\theta_4$$ for some one-forms $\theta_1,\theta_2,\theta_3,\theta_4\in V^*$. The condition $*\Phi=\mu\Phi$ translates into: $$*(\theta_1\wedge\theta_2\wedge\theta_3\wedge\theta_4)=\mu(\theta_1\wedge\theta_2\wedge\theta_3\wedge\theta_4).$$
    
    Taking the interior product of the above equation with $\theta_1^\sharp$, $\theta_2^\sharp$, $\theta_3^\sharp$, and $\theta_4^\sharp$ gives us that $\theta_1$, $\theta_2$, $\theta_3$, and $\theta_4$ are isotropic and mutually orthogonal.\medskip

    Since $\alpha^{(0)}=0$, taking $\beta=1$ in $\alpha\diamond\beta\diamond\alpha=16(\alpha\diamond\beta)^{(0)}\alpha$ does not suffice to characterize the signed square of the spinor $\xi$. Choose a basis $\{\theta_1,\theta_2,\theta_3,\theta_4,\vartheta_1,\vartheta_2,\vartheta_3,\vartheta_4\}$ of $V^*$ given by isotropic one-forms that are conjugate in pairs, that is, they are mutually orthogonal except for: $$\escal{\theta_1,\vartheta_1}=\escal{\theta_2,\vartheta_2}=\escal{\theta_3,\vartheta_3}=\escal{\theta_4,\vartheta_4}=1.$$
    
    Taking $\beta=\vartheta_1\wedge\vartheta_2\wedge\vartheta_3\wedge\vartheta_4\in\Lambda^4V^*$ we obtain $(\alpha\diamond\beta)^{(0)}=\escal{\alpha,\beta}=1$. Finally, using: $$\alpha\diamond\beta=(2-\vartheta_1\diamond\theta_1)\diamond(2-\vartheta_2\diamond\theta_2)\diamond(2-\vartheta_3\diamond\theta_3)\diamond(2-\vartheta_4\diamond\theta_4)$$ we conclude that $\alpha\diamond\beta\diamond\alpha=16\alpha$.
\end{proof}

\begin{remark}
    In \cite{GS25_G2s}, the authors carry out a study analogous to that of this subsection in signature $(4,3)$. They characterize the square of an irreducible real spinor in this signature, showing that a non-isotropic spinor is equivalent to a $\mathrm{G}_2^*$-structure, while the square of an isotropic spinor is a decomposable three-form whose components are isotropic and mutually orthogonal.
\end{remark}

\subsection{General case}

As we have seen, in signatures $(p,p)$ for $p=1,2,3$, an irreducible real chiral spinor is completely determined (up to sign) by a decomposable and (anti-)self-dual form of degree $p$. However, in signature $(4,4)$ this is no longer true. For $p\geq5$ we will also have different signed squares depending on the properties of the chiral spinor. We will focus on characterizing the signed square of a particular class of chiral spinors, namely \emph{pure} spinors.

\begin{definition}
    A non-zero spinor $\xi\in\Sigma$ is called \emph{pure} if the subspace: $$\mathrm{Ann}(\xi)\coloneqq\{\theta\in V^*\mid\gamma(\theta)\xi=0\}\subset V^*$$ is of maximal dimension.
\end{definition}

For a quadratic vector space $(V,h)$ of signature $(p,p)$ this means that $\dim_\R\mathrm{Ann}(\xi)=p$. Note that the subspace $\mathrm{Ann}(\xi)$ is automatically isotropic. Indeed, for every $\theta_1,\theta_2\in\mathrm{Ann}(\xi)$ we have: $$2\escal{\theta_1,\theta_2}\xi=(\gamma(\theta_1)\gamma(\theta_2)+\gamma(\theta_2)\gamma(\theta_1))\xi=0.$$

The following is a well-known result, see e.g.\ \cite{Chevalley1954,Meinrenken2013}.

\begin{proposition}
    Let $(V,h)$ be a quadratic vector space of signature $(p,p)$ and let $(\Sigma,\gamma)$ be an irreducible real Clifford module for $(V^*,h^*)$. If a spinor $\xi\in\Sigma$ is pure, then it is chiral.
\end{proposition}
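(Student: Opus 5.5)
The plan is to show that $\xi$ is an eigenvector of the Clifford volume element $\gamma(\nu)$ by writing $\nu$, up to a nonzero scalar, as a product of annihilating one-forms and their conjugates.

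Let $L\coloneqq\mathrm{Ann}(\xi)\subset V^*$. By purity $\dim L=p$, and $L$ is isotropic, so it is a maximal isotropic (Lagrangian) subspace of $(V^*,h^*)$. Choose a basis $\theta_1,\ldots,\theta_p$ of $L$. As in the proofs of Propositions \ref{prop:square_(2,2)}--\ref{prop:isotropic_square_(4,4)}, complete it to a basis $\{\theta_1,\ldots,\theta_p,\vartheta_1,\ldots,\vartheta_p\}$ of isotropic one-forms that are conjugate in pairs, i.e.\ $\escal{\theta_i,\vartheta_j}=\delta_{ij}$ and all other products vanish. Such a complement exists because $L$ is Lagrangian in neutral signature.

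Next express the volume form in this basis. Set $e_i^\pm\coloneqq\frac{1}{\sqrt2}(\theta_i\pm\vartheta_i)$; these form an orthonormal basis with $\escal{e_i^\pm,e_i^\pm}=\pm1$. The volume form is then $\nu=\pm e_1^+\wedge e_1^-\wedge\cdots\wedge e_p^+\wedge e_p^-$, and since the factors are orthogonal the wedge coincides with the geometric product. A direct computation in each two-plane gives
\[
e_i^+\diamond e_i^-=\tfrac12(\theta_i+\vartheta_i)\diamond(\theta_i-\vartheta_i)=\tfrac12(\vartheta_i\diamond\theta_i-\theta_i\diamond\vartheta_i)=\vartheta_i\diamond\theta_i-1,
\]
using $\theta_i\diamond\theta_i=\vartheta_i\diamond\vartheta_i=0$ and $\theta_i\diamond\vartheta_i+\vartheta_i\diamond\theta_i=2$. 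For $i\neq j$ the blocks $\vartheta_i\diamond\theta_i$ and $\vartheta_j\diamond\theta_j$ commute, since they are even elements built from mutually anticommuting one-forms. Hence
\[
\nu=\pm\prod_{i=1}^p(\vartheta_i\diamond\theta_i-1).
\]

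Now apply $\Psi_\gamma$ to $\xi$. Each factor acts on $\xi$ as $-\Id$, because $\gamma(\theta_i)\xi=0$ and the factors commute, so they can be applied one at a time. Therefore $\gamma(\nu)\xi=\pm(-1)^p\xi$, so $\xi$ lies in $\Sigma^+$ or $\Sigma^-$, i.e.\ $\xi$ is chiral.

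The main obstacle is bookkeeping: tracking the sign and orientation of $\nu$ relative to the chosen basis, and confirming that the factors commute. The sign itself does not matter for the conclusion, since we only need $\xi$ to be an eigenvector of $\gamma(\nu)$.
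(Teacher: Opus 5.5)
The paper gives no proof of this statement. It quotes it as well known and cites Chevalley and Meinrenken, so there is no in-paper argument to match. Your argument is correct and self-contained:
\begin{itemize}
\item The hyperbolic completion of the Lagrangian $L=\mathrm{Ann}(\xi)$ exists.
\item The identity $e_i^+\diamond e_i^-=\vartheta_i\diamond\theta_i-1$ is right with the paper's convention $\theta\diamond\vartheta+\vartheta\diamond\theta=2\escal{\theta,\vartheta}$.
\item Each factor sends any multiple of $\xi$ to its negative, so $\gamma(\nu)\xi=\pm(-1)^p\xi$.
\end{itemize}
You do not even need the factors to commute, since $\nu$ is already the ordered product of the pairs $e_i^+\diamond e_i^-$.

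The usual argument in the cited literature takes a different route. Because $d$ is even, $\nu\diamond\theta=-\theta\diamond\nu$ for all $\theta\in V^*$, so $\gamma(\nu)\xi$ is again annihilated by $L$. The space of spinors annihilated by a Lagrangian subspace is one-dimensional (realize $\Sigma\cong\Lambda L'$ for a complementary Lagrangian $L'$, with $L$ acting by contraction). Hence $\gamma(\nu)\xi=\lambda\xi$, and $\nu\diamond\nu=1$ forces $\lambda=\pm1$.

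That route is basis-free, but it rests on the one-dimensionality of the pure spinor line. Yours bypasses that lemma and also determines the chirality explicitly. Write $\nu=\epsilon\,e_1^+\wedge e_1^-\wedge\cdots\wedge e_p^+\wedge e_p^-$. Then you get $\mu=\epsilon(-1)^p$. The same factorization gives $\alpha\diamond\nu=\epsilon\,\alpha$ for $\alpha=\theta_1\wedge\cdots\wedge\theta_p$, hence $*\alpha=(-1)^{\binom{p}{2}}\epsilon\,\alpha=(-1)^{\binom{p+1}{2}}\mu\,\alpha$. This agrees with the duality condition in Theorem~\ref{thm:square_(p,p)}.
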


In the next result we obtain the signed square of a pure spinor in signatures $(p,p)$ for all $p\in\N$.

\begin{theorem}\label{thm:square_(p,p)}
    Let $(V,h)$ be a quadratic vector space of signature $(p,p)$. An exterior form $\alpha\in\Lambda V^*$ is the signed square of an irreducible real pure spinor $\xi\in\Sigma^\mu$ of chirality $\mu\in\Z_2$ if and only if: $$\alpha=\theta_1\wedge\cdots\wedge\theta_p$$ where $\theta_1,\ldots,\theta_p\in V^*$ satisfy $*(\theta_1\wedge\cdots\wedge\theta_p)=(-1)^{\binom{p+1}{2}}\mu(\theta_1\wedge\cdots\wedge\theta_p)$.
\end{theorem}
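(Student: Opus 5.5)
Both directions are governed by one identity: for isotropic, mutually orthogonal one-forms the geometric product agrees with the wedge product. Writing $\theta_I\coloneqq\theta_1\wedge\cdots\wedge\theta_p$, this gives $\theta_I=\theta_1\diamond\cdots\diamond\theta_p$. The plan is to obtain the forward direction from the annihilator of $\xi$ via Lemma \ref{lemma:constrained_spinorial_forms}, and the converse from Corollary \ref{cor:characterization_chiral_spinorial_forms} with a test form $\beta$ built from a conjugate isotropic basis, exactly as in the cases $p=2,3,4$.

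\emph{Forward direction.} Let $\xi\in\Sigma^\mu$ be pure, choose a basis $\theta_1,\ldots,\theta_p$ of $\mathrm{Ann}(\xi)$, and set $\alpha=\mathcal{E}^\kappa_\gamma(\xi)$.
\begin{enumerate}
    \item Apply Lemma \ref{lemma:constrained_spinorial_forms} with $Q=\gamma(\theta_i)$, whose dequantization is $\mathfrak{q}=\theta_i$. This gives $\theta_i\diamond\alpha=0$ for every $i$.
    \item Complete to a basis $\{\theta_i,\vartheta_i\}$ of $V^*$ with $\escal{\theta_i,\vartheta_j}=\delta_{ij}$ and the $\vartheta_j$ isotropic and mutually orthogonal. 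Expand $\alpha$ in the monomials $\theta_I\wedge\vartheta_J$. Since $\theta_i\diamond\alpha=\theta_i\wedge\alpha+\iota_{\theta_i^\sharp}\alpha$, and $\iota_{\theta_i^\sharp}$ acts as contraction against $\vartheta_i$, the two pieces have different degrees and each must vanish separately.
    \item From $\theta_i\wedge\alpha=0$ for all $i$, every monomial of $\alpha$ contains every $\theta_i$. From $\iota_{\theta_i^\sharp}\alpha=0$ for all $i$, no monomial contains any $\vartheta_j$. Hence $\alpha=\lambda\,\theta_1\wedge\cdots\wedge\theta_p$, and $\lambda\neq0$ because $\mathcal{E}^\kappa_\gamma$ is injective up to sign on nonzero spinors. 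Absorb $\lambda$ into $\theta_1$.
    \item Chirality gives $*(\pi\circ\tau)(\alpha)=\mu\alpha$ (stated before Corollary \ref{cor:characterization_chiral_spinorial_forms}). Since $(\pi\circ\tau)(\alpha)=(-1)^{\binom{p+1}{2}}\alpha$ on $p$-forms, this yields $*\alpha=(-1)^{\binom{p+1}{2}}\mu\alpha$.
\end{enumerate}

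\emph{Converse.} Suppose $\alpha=\theta_1\wedge\cdots\wedge\theta_p\neq0$ satisfies the stated duality condition.
\begin{enumerate}
    \item Contract the duality relation with each $\theta_j^\sharp$, as in the low-dimensional cases, to show that the $\theta_j$ are isotropic and mutually orthogonal. Then the identity $\alpha=\theta_1\diamond\cdots\diamond\theta_p$ holds.
    \item Check the linear conditions of Corollary \ref{cor:characterization_chiral_spinorial_forms}. The chirality equation is equivalent to the hypothesis by the computation in step 4 above. For the symmetry condition $(\pi^{\frac{1-s}{2}}\circ\tau)(\alpha)=\sigma\alpha$, choose the adjoint type $s$ using the table in Proposition \ref{prop:admissible_bilinear_pairings}. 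A homogeneous $p$-form satisfies this for either pairing, with the sign read off from $\tau$ or $\pi\circ\tau$; one only has to confirm that the table's $\sigma$ matches $(-1)^{\binom p2}$ for $\mathscr{B}_+$ and $(-1)^{\binom{p+1}2}$ for $\mathscr{B}_-$.
    \item For the quadratic conditions: since the $\theta_i$ anticommute and square to zero under $\diamond$, we get $\alpha\diamond\alpha=0=2^p\alpha^{(0)}\alpha$.
    \item Take $\beta=\vartheta_1\wedge\cdots\wedge\vartheta_p$ for a conjugate isotropic basis. Then $(\alpha\diamond\beta)^{(0)}=\pm\escal{\alpha,\beta}=\pm1\neq0$.
    \item Show $\alpha\diamond\beta\diamond\alpha=2^p(\alpha\diamond\beta)^{(0)}\alpha$. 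Rewrite $\alpha\diamond\beta$, up to sign, as $\prod_i\theta_i\diamond\vartheta_i=\prod_i(2-\vartheta_i\diamond\theta_i)$; the factors commute, since they involve disjoint anticommuting pairs. Multiply on the right by $\alpha$. Each $\vartheta_i\diamond\theta_i$ term is killed because $\theta_i$ can be anticommuted up to $\alpha$, where $\theta_i\diamond\alpha=0$. What survives is $2^p$ times the appropriate sign times $\alpha$.
\end{enumerate}
By Corollary \ref{cor:characterization_chiral_spinorial_forms}, $\alpha=\mathcal{E}^\kappa_\gamma(\xi)$ for some $\xi\in\Sigma^\mu$. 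Then $\theta_i\diamond\alpha=0$ together with Lemma \ref{lemma:constrained_spinorial_forms} gives $\theta_i\in\mathrm{Ann}(\xi)$, so $\dim\mathrm{Ann}(\xi)=p$ and $\xi$ is pure.

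\emph{Main obstacle.} The delicate part is the sign bookkeeping. First, reordering $\beta\diamond\alpha$ into the product $\prod_i\theta_i\diamond\vartheta_i$ introduces a sign $(-1)^{\binom p2}$. Second, this sign must be reconciled with the sign of $(\alpha\diamond\beta)^{(0)}$, which comes from the generalized-product expansion, so that the identity comes out exactly as $2^p(\alpha\diamond\beta)^{(0)}\alpha$. Third, the symmetry-type check against the table has to hold for every residue of $p$ mod $4$. The approach is to track these signs uniformly in $p$ and verify them against the $p=2,3,4$ computations already in the paper.
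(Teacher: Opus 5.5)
Your converse is fine and follows the paper's route: the conjugate isotropic basis, $\beta=\vartheta_1\wedge\cdots\wedge\vartheta_p$, and $\alpha\diamond\beta=(-1)^{\binom p2}\prod_i(2-\vartheta_i\diamond\theta_i)$. The sign identities you defer do hold. Your closing check that the resulting $\xi$ is actually pure is a welcome addition that the paper leaves implicit.

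The gap is step 2 of the forward direction. You assert that $\theta_i\wedge\alpha$ and $\iota_{\theta_i^\sharp}\alpha$ ``have different degrees''. That is true only for homogeneous $\alpha$, which is what you are trying to establish. In degree $k+1$ the terms $\theta_i\wedge\alpha^{(k)}$ and $\iota_{\theta_i^\sharp}\alpha^{(k+2)}$ can cancel. The monomial expansion does not separate them either, since both operations produce monomials containing $\theta_i$ but not $\vartheta_i$. For instance, in signature $(1,1)$ the form $\alpha=1+\theta\wedge\vartheta=\theta\diamond\vartheta$ satisfies $\theta\diamond\alpha=0$, although $\theta\wedge\alpha=\theta\neq0$.

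Structurally, $\{\alpha\mid\theta_i\diamond\alpha=0\ \forall i\}$ is the right ideal $\theta_1\diamond\cdots\diamond\theta_p\diamond\Lambda V^*$. It consists of the forms corresponding to $\xi\otimes\mathscr{B}(-,\eta)$ for arbitrary $\eta\in\Sigma$, so it has dimension $2^p$. Left conditions such as $\theta_i\diamond\alpha=0$ and $\nu\diamond\alpha=\mu\alpha$ only constrain the image of the corresponding endomorphism. They can never cut this space down to the line through $\theta_1\wedge\cdots\wedge\theta_p$. As written, your argument would prove that every element of this ideal is such a multiple, which is false.

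What is missing is a condition that sees the other side of $\xi\otimes\mathscr{B}(-,\xi)$. The paper uses the symmetry condition $\tau(\alpha)=\sigma\alpha$ for a pairing of positive adjoint type (Theorem~\ref{thm:characterization_spinorial_forms}). This forces $\alpha^{(k)}=0$ for $k\equiv2,3$ mod $4$ if $\sigma=+1$, and for $k\equiv0,1$ if $\sigma=-1$. Hence $\alpha^{(k)}$ and $\alpha^{(k+2)}$ are never both non-zero, each homogeneous component of $\theta_i\diamond\alpha$ is a single term, and the separation you want becomes legitimate.

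Alternatively, the second form of Lemma~\ref{lemma:constrained_spinorial_forms} gives $\alpha\diamond\theta_i=0$. The form $\alpha$ has definite parity $\pi(\alpha)=(-1)^p\alpha$, which follows from $\nu\diamond\alpha=\mu\alpha$ and $\alpha\diamond\nu=(-1)^p\mu\alpha$. Together these yield $\theta_i\wedge\alpha-\iota_{\theta_i^\sharp}\alpha=0$ alongside $\theta_i\wedge\alpha+\iota_{\theta_i^\sharp}\alpha=0$.

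With either fix, your step 3 goes through. Using $\iota_{\theta_i^\sharp}\alpha=0$ to exclude every $\vartheta_j$ is slightly more direct than the paper's appeal to the chirality equation to kill the degrees above $p$.
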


\begin{proof}
    Let $\mathscr{B}$ be an admissible bilinear pairing of positive adjoint type and symmetry type $\sigma\in\Z_2$. By Proposition \ref{prop:admissible_bilinear_pairings} we have that: $$\sigma=\begin{cases}
        +1&\mbox{ if }p\equiv_40,1,\\
        -1&\mbox{ if }p\equiv_42,3.
    \end{cases}$$
    
    Let us start with the case $\sigma=+1$. The linear equation $\tau(\alpha)=\alpha$ implies that $\alpha^{(k)}=0$ for $k\equiv_42,3$. As a consequence, we obtain that if $\alpha^{(k)},\alpha^{(k+1)}$ are non-zero in $\alpha$, then $\alpha^{(k-2)}$, $\alpha^{(k-1)}$, $\alpha^{(k+2)}$, and $\alpha^{(k+3)}$ are zero. Hence, for $\theta\in V^*$, we conclude that: $$\theta\diamond(\alpha^{(k)}+\alpha^{(k+1)})=\theta\wedge\alpha^{(k)}+\iota_{\theta^\sharp}\alpha^{(k)}+\theta\wedge\alpha^{(k+1)}+\iota_{\theta^\sharp}\alpha^{(k+1)}$$ does not interact with contributions from other terms of different degree in $\alpha$. Therefore, $\theta\diamond\alpha=0$ if and only if $\theta\diamond\alpha^{(k)}=0$ for all $k=0,\ldots,2p$.\medskip
    
    By Lemma \ref{lemma:constrained_spinorial_forms}, the equation $\gamma(\theta)\xi=0$ is equivalent to $\theta\diamond\alpha=0$, which by the above discussion is equivalent to $\theta\diamond\alpha^{(k)}=0$ for all $k=0,\ldots,2p$. Since the spinor $\xi$ is pure, we have $p$ linearly independent one-forms $\theta_1,\ldots,\theta_p\in V^*$ such that $\theta_i\diamond\alpha=0$ for all $i=1,\ldots,p$. In particular, this implies that $\theta_i\wedge\alpha^{(k)}=0$ for all $i=1,\ldots,p$, thus: $$\alpha^{(k)}=\theta_1\wedge\cdots\wedge\theta_p\wedge\beta^{(k-p)}$$ for some $\beta^{(k-p)}\in\Lambda^{k-p}V^*$. Then $\alpha^{(k)}=0$ if $k<p$, and $\alpha^{(p)}$ is a scalar multiple of $\theta_1\wedge\cdots\wedge\theta_p$. Scaling the exterior form $\alpha$ we can assume that $\alpha^{(p)}=\theta_1\wedge\cdots\wedge\theta_p$. The chirality linear equation $*(\pi\circ\tau)(\alpha)=\mu\alpha$ implies that $\alpha^{(k)}=0$ for all $k>p$ and: \begin{equation}\label{eq:duality_alpha_p}
        *\alpha^{(p)}=(-1)^{\binom{p+1}{2}}\mu\alpha^{(p)}.
    \end{equation}

    If we take $\sigma=-1$, then the equation $\tau(\alpha)=-\alpha$ implies that $\alpha^{(k)}=0$ for $k\equiv_40,1$. By the same analysis we conclude that $\alpha^{(k)}=0$ for all $k\neq p$ and $\alpha^{(p)}$ is a decomposable $p$-form satisfying \eqref{eq:duality_alpha_p}. Therefore: $$\alpha=\theta_1\wedge\cdots\wedge\theta_p$$ for $\theta_1,\ldots,\theta_p\in V^*$ isotropic and mutually orthogonal and satisfying \eqref{eq:duality_alpha_p}.\medskip
    
    Since $\alpha^{(0)}=0$, taking $\beta=1$ in $\alpha\diamond\beta\diamond\alpha=2^p(\alpha\diamond\beta)^{(0)}\alpha$ does not suffice to characterize the signed square of the spinor $\xi$. Choose a basis $\{\theta_1,\ldots,\theta_p,\vartheta_1,\ldots,\vartheta_p\}$ of $V^*$ given by isotropic one-forms that are conjugate in pairs, that is, they are mutually orthogonal except for: $$\escal{\theta_i,\vartheta_i}=1$$ for all $i=1,\ldots,p$. Taking $\beta=\vartheta_1\wedge\cdots\wedge\vartheta_p\in\Lambda^pV^*$ we obtain: $$(\alpha\diamond\beta)^{(0)}=(-1)^{\binom{p+1}{2}+p^2}\escal{\alpha,\beta}=(-1)^{\binom{p+1}{2}+p^2}=(-1)^{\binom{p}{2}}.$$
    
    Finally, using: $$\alpha\diamond\beta=(-1)^{\binom{p}{2}}(2-\vartheta_1\diamond\theta_1)\diamond\cdots\diamond(2-\vartheta_p\diamond\theta_p)$$ we conclude that $\alpha\diamond\beta\diamond\alpha=(-1)^{\binom{p}{2}}2^p\alpha$.
\end{proof}

\begin{remark}
    Let us count the degrees of freedom encoded in the signed square $\alpha=\theta_1\wedge\cdots\wedge\theta_p$ for an arbitrary $p\in\N$ as follows. If $\alpha=\theta_1\wedge\cdots\wedge\theta_p$ satisfies $*\alpha=\pm\alpha$, then the one-forms $\theta_1,\ldots,\theta_p$ are isotropic and mutually orthogonal. Hence they define a maximally isotropic subspace: $$L^*\coloneqq\mathrm{span}_\R\{\theta_1,\ldots,\theta_p\}\subset V^*.$$
    
    Such subspaces are parameterized by the isotropic Grassmannian $\mathrm{Gr}_p^0(V^*,h^*)$. In fact, any non-zero multiple of $\alpha$ defines the same maximally isotropic subspace $L^*$, thus we have a map: $$\mathbb{P}(\Lambda^pV^*)\ni[\alpha]\mapsto L^*\in\mathrm{Gr}_p^0(V^*,h^*).$$
    
    Now note that if $L^*\in\mathrm{Gr}_p^0(V^*,h^*)$ is a maximally isotropic subspace spanned by $\theta_1,\ldots,\theta_p$, then the exterior form $\alpha=\theta_1\wedge\cdots\wedge\theta_p$ automatically satisfies $*\alpha=\pm\alpha$. Indeed, we have that: $$*((*\alpha)\wedge\theta_i)=\iota_{\theta_i^\sharp}(*^2\alpha)=\iota_{\theta_i^\sharp}\alpha=0$$ for all $i=1,\ldots,p$, thus $*\alpha=c\alpha$ for some non-zero real number $c$. Using again $*^2\alpha=\alpha$ we conclude that $c=\pm1$. Therefore, we have a bijection between the space of (anti-)self-dual decomposable $p$-forms, up to scaling, and the isotropic Grassmannian $\mathrm{Gr}_p^0(V^*,h^*)$, which is a homogeneous space of dimension $\binom{p}{2}$. We conclude that the degrees of freedom of the signed square $\alpha=\theta_1\wedge\cdots\wedge\theta_p$ are $\binom{p}{2}+1$, where the $1$ is due to the scaling of the exterior form $\alpha$. This number is the same as the degrees of freedom of a pure spinor in signature $(p,p)$.
\end{remark}

As a consequence of Theorem \ref{thm:square_(p,p)} and the results of the previous subsections, we recover the following well-known result, see e.g.\ \cite{Chevalley1954,Meinrenken2013}.

\begin{corollary}
    Let $(V,h)$ be a quadratic vector space of signature $(p,p)$ and let $(\Sigma,\gamma)$ be an irreducible real Clifford module for $(V^*,h^*)$. Then: \begin{itemize}
        \item In the cases $p=1,2,3$, every chiral spinor is pure.
        \item In the case $p=4$, a chiral spinor is pure if and only if it is isotropic.
    \end{itemize}
\end{corollary}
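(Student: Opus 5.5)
The plan is to derive both bullet points by comparing the explicit signed squares found in the previous subsections with the characterization of pure spinors in Theorem \ref{thm:square_(p,p)}. The guiding observation is that, by that theorem and the reasoning in its proof, a non-zero chiral spinor $\xi\in\Sigma^\mu$ is pure exactly when its signed square $\alpha_\xi$ is a decomposable $p$-form $\theta_1\wedge\cdots\wedge\theta_p$ obeying the stated duality condition.

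The forward implication comes directly from Theorem \ref{thm:square_(p,p)}. For the reverse implication, I would start from $\alpha=\theta_1\wedge\cdots\wedge\theta_p$ with this duality condition, so that by the Remark above the $\theta_i$ span a maximal isotropic subspace. Then:
\begin{itemize}
\item For each $i$ we have $\theta_i\wedge\alpha=0$, and the duality condition gives $\iota_{\theta_i^\sharp}\alpha=0$, because $\theta_i$ is isotropic and orthogonal to all the $\theta_j$.
\item Consequently $\theta_i\diamond\alpha=0$.
\item By Lemma \ref{lemma:constrained_spinorial_forms}, this means $\gamma(\theta_i)\xi=0$, so $\dim\mathrm{Ann}(\xi)\geq p$ and $\xi$ is pure.
\end{itemize}

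For $p=1,2,3$, Propositions \ref{prop:square_(1,1)}, \ref{prop:square_(2,2)} and \ref{prop:square_(3,3)} show that every chiral spinor has signed square of the form $\theta_1\wedge\cdots\wedge\theta_p$, with the same duality sign as in Theorem \ref{thm:square_(p,p)}. The signs should be checked directly:
\begin{itemize}
\item $p=1$: $(-1)^{1}\mu=-\mu$.
\item $p=2$: $(-1)^{3}\mu=-\mu$.
\item $p=3$: $(-1)^{6}\mu=\mu$.
\end{itemize}
All three agree with the propositions. By the converse argument above, every chiral spinor is therefore pure.

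For $p=4$, suppose first that $\xi$ is isotropic. Proposition \ref{prop:isotropic_square_(4,4)} gives $\alpha=\theta_1\wedge\cdots\wedge\theta_4$ with $*\alpha=\mu\alpha$, and $(-1)^{10}\mu=\mu$, so $\xi$ is pure. Conversely, suppose $\xi$ is pure. Then Theorem \ref{thm:square_(p,p)} shows that $\alpha_\xi$ has no degree zero component. By \eqref{eq:alpha0_norm_spinor}, $\alpha^{(0)}=\frac{\kappa}{16}\mathscr{B}(\xi,\xi)$, so $\mathscr{B}(\xi,\xi)=0$ and $\xi$ is isotropic.

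The main obstacle is making the converse step fully rigorous, namely confirming that $\iota_{\theta_i^\sharp}\alpha=0$. This follows from the isotropy and mutual orthogonality of the $\theta_j$, which the earlier propositions derive from the duality condition. One must also make sure that no different normalization of $\mathscr{B}$ (symmetric versus skew) affects the use of Lemma \ref{lemma:constrained_spinorial_forms}; since the lemma holds for any admissible pairing, this should be harmless.
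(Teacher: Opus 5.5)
Your proposal is correct and follows the route the paper indicates: the corollary is read off by comparing Theorem \ref{thm:square_(p,p)} with the explicit signed squares in Propositions \ref{prop:square_(1,1)}, \ref{prop:square_(2,2)}, \ref{prop:square_(3,3)} and \ref{prop:isotropic_square_(4,4)}, together with \eqref{eq:alpha0_norm_spinor} for the case $p=4$. Your argument that $\theta_i\diamond\alpha=0$ forces $\gamma(\theta_i)\xi=0$ via Lemma \ref{lemma:constrained_spinorial_forms} makes explicit the purity claim in the ``if'' direction of Theorem \ref{thm:square_(p,p)}, which the paper leaves implicit.
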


Given an irreducible real spinor $\xi\in\Sigma$, consider now the following equation: $$\varphi\cdot\xi=H\cdot\xi$$ for a one-form $\varphi\in V^*$ and a three-form $H\in\Lambda^3V^*$. As explained in the introduction, this is the algebraic constraint appearing in the NS-NS Killing spinor equations in NS-NS supergravity, see e.g.\ \cite{Friedrich_Ivanov2002,Agricola2006,ShahbaziThesis} and references therein. If we consider the case $\varphi=0$, then we get the equation $H\cdot\xi=0$ for a three-form $H$. This equation models the spinorial instanton condition for a connective structure on a bundle gerbe.

\begin{proposition}\label{prop:phi.xi=H.xi}
    Let $\alpha=\theta_1\wedge\cdots\wedge\theta_p$ be the signed square of an irreducible real pure spinor $\xi\in\Sigma^\mu$ of chirality $\mu\in\Z_2$ in signature $(p,p)$ for $p\geq3$. A one-form $\varphi\in V^*$ and a three-form $H\in\Lambda^3V^*$ satisfy $\varphi\cdot\xi=H\cdot\xi$ if and only if: $$H\wedge\alpha=0,\qquad \varphi\wedge\alpha=H\triangle_1\alpha.$$

    For $p=2$, a one-form $\varphi\in V^*$ and a three-form $H\in\Lambda^3V^*$ satisfy $\varphi\cdot\xi=H\cdot\xi$ if and only if: $$(\varphi+\mu*H)\wedge\alpha=0.$$
\end{proposition}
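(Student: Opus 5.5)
We apply Lemma \ref{lemma:constrained_spinorial_forms} to the endomorphism $Q=\Psi_\gamma(\varphi-H)$, whose dequantization is $\mathfrak{q}=\varphi-H$. Then $\varphi\cdot\xi=H\cdot\xi$ holds if and only if
\[
(\varphi-H)\diamond\alpha=0,
\]
where $\alpha=\theta_1\wedge\cdots\wedge\theta_p$ is given by Theorem \ref{thm:square_(p,p)}. The proof reduces to expanding this product with the generalized products and separating the result by degree.

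\emph{Expanding the product.} Using $\varphi\diamond\alpha=\varphi\wedge\alpha+\iota_{\varphi^\sharp}\alpha$ and the expansion formula for a three-form,
\[
H\diamond\alpha=H\wedge\alpha+H\triangle_1\alpha-H\triangle_2\alpha-H\triangle_3\alpha,
\]
with signs $(-1)^{\binom{k+1}{2}+3k}$. The degrees involved are:
\begin{itemize}
\item $H\wedge\alpha$ in degree $p+3$;
\item $\varphi\wedge\alpha$ and $H\triangle_1\alpha$ in degree $p+1$;
\item $\iota_{\varphi^\sharp}\alpha$ and $H\triangle_2\alpha$ in degree $p-1$;
\item $H\triangle_3\alpha$ in degree $p-3$.
\end{itemize}
The total vanishes if and only if each graded piece vanishes. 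This gives four equations: $H\wedge\alpha=0$, $\varphi\wedge\alpha=H\triangle_1\alpha$ (up to checking the sign), $\iota_{\varphi^\sharp}\alpha=H\triangle_2\alpha$ up to sign, and $H\triangle_3\alpha=0$.

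\emph{Eliminating the low-degree equations.} The key step is to show that, for $p\ge3$, the last two equations are Hodge duals of the first two. We use $*\alpha=\epsilon\alpha$ with $\epsilon=(-1)^{\binom{p+1}{2}}\mu$ together with the last item of Proposition \ref{prop:properties_triangles}, $\beta\triangle_b(*\gamma)=*(\gamma\wedge\beta)$:
\[
H\triangle_3\alpha=\epsilon\,H\triangle_3(*\alpha)=\epsilon*(\alpha\wedge H),
\]
so $H\triangle_3\alpha=0$ is equivalent to $H\wedge\alpha=0$. Similarly, $\iota_{\varphi^\sharp}\alpha=\varphi\triangle_1\alpha=\epsilon*(\alpha\wedge\varphi)$.

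For $H\triangle_2\alpha$ we need an identity expressing $*(H\triangle_1\alpha)$ as $\pm H\triangle_2(*\alpha)$. The cleanest route avoids index computations. Write $H\diamond\alpha$ using $\alpha=\epsilon*\alpha=\epsilon\,\nu\diamond(\pi\circ\tau)(\alpha)$ up to sign, via Proposition \ref{prop:product_volume_form}. Then
\[
H\diamond\alpha=\pm\, H\diamond\nu\diamond\alpha=\mp\,\nu\diamond H\diamond\alpha
\]
since $H$ is odd, and $\nu\diamond(\cdot)$ acts as $*$ composed with $\pi\circ\tau$. The same holds for $\varphi$. Hence the element $X=(\varphi-H)\diamond\alpha$ satisfies $X=\pm *(\pi\circ\tau)X$. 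This self-duality pairs degree $p+k$ with degree $p-k$, so the vanishing of the components of degrees $p+1$ and $p+3$ implies the vanishing of those of degrees $p-1$ and $p-3$. That yields the stated system.

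\emph{Main obstacle: signs.} The main difficulty is bookkeeping of the signs, so that the degree $p+1$ equation reads exactly $\varphi\wedge\alpha=H\triangle_1\alpha$. The sign of the $\triangle_1$ term is $(-1)^{1+3}=+1$, which gives $\varphi\wedge\alpha-H\wedge\alpha\ldots$, that is, $\varphi\wedge\alpha-H\triangle_1\alpha=0$, as claimed.

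\emph{The case $p=2$.} Here $H\triangle_3\alpha=0$ trivially, and $H\wedge\alpha=0$ in degree $5>4$. The degree $3$ piece and the degree $1$ piece are related by the same duality. Since $\Lambda^3V^*\cong V^*$ via $*$, the remaining degree $3$ equation becomes $\varphi\wedge\alpha=H\triangle_1\alpha$. Writing $H=*\eta$ with $\eta=*H$ a one-form, we use
\[
H\triangle_1\alpha=(*\eta)\triangle_1\alpha=\pm *(\ldots),
\]
which, combined with $*\alpha=-\mu\alpha$ (Proposition \ref{prop:square_(2,2)}), reduces to $-\mu\,\eta\wedge\alpha$. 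This gives $(\varphi+\mu*H)\wedge\alpha=0$. I would verify this final sign directly in the null basis $\{\theta_1,\theta_2,\vartheta_1,\vartheta_2\}$ used in Proposition \ref{prop:square_(2,2)}.
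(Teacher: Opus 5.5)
For $p\ge 3$ your argument is correct and is essentially the paper's proof. You apply Lemma~\ref{lemma:constrained_spinorial_forms}, expand $H\diamond\alpha$ with signs $+,+,-,-$, separate by degree, and use Hodge duality to discard the equations in degrees $p-1$ and $p-3$. Your packaging of the last step is tidy: from $\nu\diamond\alpha=\mu\alpha$ and the oddness of $\varphi-H$ you get $X=(\varphi-H)\diamond\alpha=-\mu\,\nu\diamond X=-\mu*(\pi\circ\tau)(X)$. This handles both low-degree equations at once and spares you their exact signs.

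The gap is in the case $p=2$, where the whole content of the statement is the sign in $(\varphi+\mu*H)\wedge\alpha=0$. Your reduction to the single degree-three equation $\varphi\wedge\alpha=H\triangle_1\alpha$ is fine, by the same self-duality. However, the identity $H\triangle_1\alpha=-\mu(*H)\wedge\alpha$ is asserted, not derived.

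\begin{itemize}
\item The Hodge-star item of Proposition~\ref{prop:properties_triangles} does not apply directly. It concerns $\triangle_a$ with $a$ the degree of the first factor, whereas here you have $\triangle_1$ with a three-form.
\item Your setup ``$H=*\eta$ with $\eta=*H$'' is inconsistent, because $*^2=-1$ on $\Lambda^3V^*$ in signature $(2,2)$.
\end{itemize}

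That inconsistency is precisely the sign at stake. From $H=*\eta=\eta\diamond\nu$ one gets $H\diamond\alpha=\mu\,\eta\diamond\alpha$, hence $H\triangle_1\alpha=\mu\,\eta\wedge\alpha$. Substituting $\eta=*H$ would then give the wrong condition $(\varphi-\mu*H)\wedge\alpha=0$.

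The correct step is the paper's. Put $\rho\coloneqq*H$, so that $H=-*\rho=-\rho\diamond\nu$ by Proposition~\ref{prop:product_volume_form}. Then $H\diamond\alpha=-\rho\diamond\nu\diamond\alpha=-\mu\,\rho\diamond\alpha$, and comparing degree-three parts gives $H\triangle_1\alpha=-\mu\,\rho\wedge\alpha$. With this your route closes.

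The two routes then differ only in how they finish. The paper keeps the full identity $(\varphi+\mu\rho)\diamond\alpha=0$. It then notes that $\beta\wedge\alpha=0$ forces $\beta\in\mathrm{span}_\R\{\theta_1,\theta_2\}$, and hence $\iota_{\beta^\sharp}\alpha=0$ by isotropy. Your duality argument disposes of the degree-one part automatically.
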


\begin{proof}
    By Lemma \ref{lemma:constrained_spinorial_forms}, we have $\varphi\cdot\xi=H\cdot\xi$ if and only if $\varphi\diamond\alpha=H\diamond\alpha$. We compute: \begin{align*}
        \varphi\diamond\alpha&=\varphi\wedge\alpha+\iota_{\varphi^\sharp}\alpha,\\
        H\diamond\alpha&=H\wedge\alpha+H\triangle_1\alpha-H\triangle_2\alpha-H\triangle_3\alpha.
    \end{align*}
    
    Separating by degrees we obtain: $$H\wedge\alpha=0,\qquad \varphi\wedge\alpha=H\triangle_1\alpha,\qquad \iota_{\varphi^\sharp}\alpha=-H\triangle_2\alpha,\qquad H\triangle_3\alpha=0.$$
    
    The first and last equations are equivalent. Indeed, using $*\alpha=(-1)^{\binom{p+1}{2}}\mu\alpha$ and Proposition \ref{prop:properties_triangles} we obtain: $$H\triangle_3\alpha=(-1)^{\binom{p+1}{2}}\mu H\triangle_3(*\alpha)=(-1)^{\binom{p+1}{2}}\mu*(\alpha\wedge H)=(-1)^{\binom{p}{2}}\mu*(H\wedge\alpha).$$
    
    The second and third equations are also equivalent via the Hodge star operator since: $$H\diamond\alpha=(-1)^{\binom{p+1}{2}}\mu H\diamond(*\alpha)=(-1)^p\mu H\diamond\alpha\diamond\nu.$$
    
    Hence $(-1)^{\binom{p}{2}}\mu*(H\triangle_1\alpha)=-H\triangle_2\alpha$ and: $$\iota_{\varphi^\sharp}\alpha=(-1)^{\binom{p+1}{2}}\mu*(\alpha\wedge\varphi)=(-1)^{\binom{p}{2}}\mu*(\varphi\wedge\alpha)=(-1)^{\binom{p}{2}}\mu*(H\triangle_1\alpha)=-H\triangle_2\alpha,$$ as claimed. For $p=2$, let $\rho\coloneqq*H\in V^*$ and compute: $$H\diamond\alpha=(-*\rho)\diamond\alpha=-\rho\diamond\nu\diamond\alpha=\rho\diamond(*\alpha)=-\mu\rho\diamond\alpha.$$
    
    Hence $\varphi\diamond\alpha=H\diamond\alpha$ is equivalent to $(\varphi+\mu\rho)\diamond\alpha=0$. Now note that for every one-form $\beta\in V^*$ we have: $$\beta\diamond\alpha=\beta\diamond(\theta_1\wedge\theta_2)=\beta\wedge\theta_1\wedge\theta_2+\escal{\beta,\theta_1}\theta_2-\escal{\beta,\theta_2}\theta_1.$$
    
    Then $\beta\diamond\alpha=0$ if and only if: $$\beta\wedge\theta_1\wedge\theta_2=0,\qquad \escal{\beta,\theta_1}=\escal{\beta,\theta_2}=0.$$
    
    The first equation above is equivalent to $\beta\in\mathrm{span}_\R\{\theta_1,\theta_2\}$, which in turn implies that $\beta$ is orthogonal to $\theta_1$ and $\theta_2$ since $\mathrm{span}_\R\{\theta_1,\theta_2\}\subset V^*$ is a totally isotropic subspace. Setting $\beta=\varphi+\mu*H\in V^*$ yields the result from the statement.
\end{proof}

The explicit description of the square of an irreducible real pure spinor in Theorem \ref{thm:square_(p,p)}, together with the equivariance of the signed spinor square maps, allows us to compute the Lie algebra of the stabilizer of an irreducible real pure spinor, complementing the characterization given by Kath in \cite[Cor.\ 5.1]{Kath_2000}, which was obtained by different methods. We carry out this computation in the remainder of this section.

\begin{proposition}
    Let $\alpha=\theta_1\wedge\cdots\wedge\theta_p$ be the signed square of an irreducible real pure spinor $\xi\in\Sigma^\mu$ of chirality $\mu\in\Z_2$ in signature $(p,p)$. Then the Lie algebra $\mathfrak{stab}(\xi)\subset\mathfrak{so}(p,p)$ of the stabilizer $\mathrm{Stab}(\xi)$ of $\xi$ in $\mathrm{Spin}_0(p,p)$ is given by: $$\left\{\begin{pmatrix}
        A&B\\
        0&-A^t
    \end{pmatrix}\mid A\in\mathfrak{sl}(p,\R),\;B\in\mathfrak{so}(p,\R)\right\}\cong\mathfrak{sl}(p,\R)\ltimes\Lambda^2\R^p.$$
\end{proposition}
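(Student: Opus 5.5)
By Corollary \ref{cor:stabilizers_isomorphic}, $\mathrm{Stab}(\xi)\cong\mathrm{Stab}(\alpha)$ via $\mathrm{Ad}$. So it suffices to compute the Lie algebra of the stabilizer of $\alpha=\theta_1\wedge\cdots\wedge\theta_p$ in $\mathrm{SO}_0(V^*,h^*)$. This stabilizer Lie algebra is
\[
\{\beta\in\mathfrak{so}(V^*,h^*)\mid\beta\cdot\alpha=0\},
\]
where $\beta\cdot\alpha$ denotes the derivation action, exactly as in the $\mathrm{Spin}_0(4,3)$ computation above. I will work in the basis $\{\theta_1,\ldots,\theta_p,\vartheta_1,\ldots,\vartheta_p\}$ of isotropic one-forms conjugate in pairs, used in the proof of Theorem \ref{thm:square_(p,p)}. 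In this basis the Gram matrix of $h^*$ is $J=\begin{pmatrix}0&I\\ I&0\end{pmatrix}$. The algebra $\mathfrak{so}(V^*,h^*)$ then consists of block matrices $\begin{pmatrix}A&B\\ C&D\end{pmatrix}$ with $D=-A^t$, $B^t=-B$ and $C^t=-C$. I will use the convention that the columns give the images of the basis vectors, so the first $p$ columns encode $\beta(\theta_j)$.

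First I use that a derivation $\beta$ annihilates $\theta_1\wedge\cdots\wedge\theta_p$ if and only if
\[
\sum_j\theta_1\wedge\cdots\wedge\beta(\theta_j)\wedge\cdots\wedge\theta_p=0 .
\]
Write $\beta(\theta_j)=\sum_iA_{ij}\theta_i+\sum_iC_{ij}\vartheta_i$. The $\vartheta$-components produce terms of the form
\[
\theta_1\wedge\cdots\wedge\vartheta_i\wedge\cdots\wedge\theta_p .
\]
These are linearly independent for distinct pairs $(i,j)$ with the $\vartheta_i$ in slot $j$, so all $C_{ij}=0$; in other words, $\beta$ preserves the maximal isotropic subspace $L^*=\mathrm{span}_\R\{\theta_1,\ldots,\theta_p\}$. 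This matches the expectation that the stabilizer of $\xi$ preserves $\mathrm{Ann}(\xi)=L^*$. The $\theta$-components contribute $\mathrm{tr}(A)\,\alpha$, so the condition becomes $\mathrm{tr}(A)=0$. Combining these with the $\mathfrak{so}$ constraints gives $D=-A^t$ and $B$ skew with no further restriction, which is exactly the stated block form. This form, with $A\in\mathfrak{sl}(p,\R)$ and $B\in\mathfrak{so}(p,\R)$, is isomorphic to $\mathfrak{sl}(p,\R)\ltimes\Lambda^2\R^p$.

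To finish, I will check the bracket structure. The $B$-block is an abelian ideal, since two strictly block upper triangular matrices multiply to zero. The $A$-block acts on it by $B\mapsto AB+BA^t$, which is the natural action of $\mathfrak{sl}(p,\R)$ on $\Lambda^2\R^p$. As a sanity check, the dimension is $p^2-1+\binom{p}{2}$, and adding $1$ for the scaling of $\xi$ gives $\dim\mathfrak{so}(p,p)-\dim\mathfrak{stab}=2p^2-p-(p^2-1+\binom p2)=\binom p2+1$. This agrees with the degree-of-freedom count in the preceding remark, so the orbit dimension is consistent.

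The main obstacle is the first step: being careful about conventions. The Lie algebra $\mathfrak{so}(V^*,h^*)$ has to be identified with matrices in this null basis, and the derivation action on $\Lambda^pV^*$ must be taken with the correct transpose convention. If the conventions are swapped, the vanishing block becomes $B$ instead of $C$, which would yield the lower triangular form. I will fix the convention so that the preserved subspace $L^*$ sits in the first block, matching the statement.
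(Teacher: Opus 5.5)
Your proposal is correct and follows essentially the same route as the paper. Both reduce to $\mathfrak{stab}(\alpha)$ via Corollary~\ref{cor:stabilizers_isomorphic}, write $M\in\mathfrak{so}(V^*,h^*)$ in the null basis with $B^t=-B$, $C^t=-C$, $D=-A^t$, deduce $C=0$ and $\Tr(A)=0$ from $M(\alpha)=0$, and read off $\mathfrak{sl}(p,\R)\ltimes\Lambda^2\R^p$ from the brackets; your explicit linear-independence argument for $C=0$ and the dimension check are extra detail on top of that argument.
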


\begin{proof}
    By Corollary \ref{cor:stabilizers_isomorphic}, the Lie algebra $\mathfrak{stab}(\xi)$ of the stabilizer $\mathrm{Stab}(\xi)$ of $\xi$ in $\mathrm{Spin}_0(p,p)$ is isomorphic to the Lie algebra $\mathfrak{stab}(\alpha)$ of the stabilizer $\mathrm{Stab}(\alpha)$ of $\alpha$ in $\mathrm{SO}_0(p,p)$. Let $\{\theta_1,\ldots,\theta_p,\vartheta_1,\ldots,\vartheta_p\}$ be a basis of $V^*$ such that the metric $h^*$ takes the following form: $$h^*=\begin{pmatrix}
        0&\mathbbm{1}_p\\
        \mathbbm{1}_p&0
    \end{pmatrix}.$$
    
    Let us consider: $$M=\begin{pmatrix}
        A&B\\
        C&D
    \end{pmatrix}\in\mathfrak{gl}(V^*)\cong\mathfrak{gl}(2p,\R),$$ where $A,B,C,D\in\mathfrak{gl}(p,\R)$. Since the matrix $M$ must belong to $\mathfrak{so}(V^*,h^*)$, i.e.\ $M^th^*+h^*M=0$, we get that $B^t=-B$, $C^t=-C$, and $D=-A^t$. An element $M\in\mathfrak{so}(V^*,h^*)$ belongs to $\mathfrak{stab}(\alpha)$ if and only if $M(\alpha)=0$. The equation $M(\alpha)=0$ implies that $C=0$, thus $0=M(\theta_1\wedge\cdots\wedge\theta_p)=\Tr(A)\theta_1\wedge\cdots\wedge\theta_p$ yields $\Tr(A)=0$. Therefore $A\in\mathfrak{sl}(p,\R)$, $B\in\mathfrak{so}(p,\R)$, $C=0$, and the Lie bracket in $\mathfrak{stab}(\alpha)$ is given by: $$\left[\begin{pmatrix}
        A_1&B_1\\
        0&-A_1^t
    \end{pmatrix},\begin{pmatrix}
        A_2&B_2\\
        0&-A_2^t
    \end{pmatrix}\right]=\begin{pmatrix}
        [A_1,A_2]&A_1B_2+B_2A_1^t-(A_2B_1+B_1A_2^t)\\
        0&-[A_1,A_2]^t
    \end{pmatrix}.$$
    
    The Lie algebra $\mathfrak{so}(p,\R)$ can be identified, as a vector space, with $\Lambda^2\R^p$. Note that $\Lambda^2\R^p$ is an abelian ideal of $\mathfrak{stab}(\alpha)$, while $\mathfrak{sl}(p,\R)$ is a subalgebra of $\mathfrak{stab}(\alpha)$. This implies that the Lie algebra of the stabilizer of $\alpha$ is a semidirect product $\mathfrak{stab}(\alpha)\cong\mathfrak{sl}(p,\R)\ltimes\Lambda^2\R^p$.
\end{proof}


\section{Parallel spinorial forms in neutral signatures}\label{sec:parallel_square}

In this section, we extend the algebraic theory of real spinorial forms of Sections \ref{sec:preliminaries} and \ref{sec:neutral_square} to bundles of irreducible real Clifford modules equipped with an arbitrary connection and an admissible bilinear pairing. This allows us to define and study parallel spinors on pseudo-Riemannian manifolds of neutral signature.

\subsection{Preliminaries}

Let $(M,g)$ be a connected and \emph{oriented} pseudo-Riemannian manifold of neutral signature $(p,p)$. We denote by $\mathrm{Cl}(M,g)$ the bundle of real Clifford algebras of the cotangent bundle $(T^*M,g^*)$, which is modeled on the real Clifford algebra $\mathrm{Cl}(V^*,h^*)$.

\begin{definition}
    A \emph{bundle of real Clifford modules} on $(M,g)$ is a pair $(S,\gamma^S)$, where $S$ is a real vector bundle on $M$ and $\gamma^S\colon\mathrm{Cl}(M,g)\to\End(S)$ is a morphism of bundles of unital and associative algebras, where $\End(S)$ denotes the bundle of endomorphisms of $S$.
\end{definition}

Since $M$ is connected, any bundle of real Clifford modules $(S,\gamma^S)$ on $(M,g)$ is modeled on a Clifford module $(\Sigma,\gamma)$ called its \emph{type}. That is, for every point $p\in M$, the Clifford module $\gamma^S_p\colon\mathrm{Cl}(T^*_pM,g^*_p)\to\End(S_p)$ is isomorphic to the Clifford module $\gamma\colon\mathrm{Cl}(V^*,h^*)\to\End(\Sigma)$ via an \emph{unbased} isomorphism of Clifford modules, see \cite{Lazaroiu_Shahbazi2019} for more details.

\begin{definition}
    A \emph{bundle of irreducible real Clifford modules} $(S,\gamma^S)$ on $(M,g)$ is a bundle of real Clifford modules whose type $(\Sigma,\gamma)$ is irreducible. In this case, global sections $\xi\in\Gamma(S)$ of $S$ are called \emph{irreducible real spinors} on $(M,g)$.
\end{definition}

In neutral signatures $(p,p)$, the rank of a bundle of irreducible real Clifford modules is $\rk_\R S=\dim_\R\Sigma=2^p$. In this situation, it is shown in \cite{Lazaroiu_Shahbazi2019} that an \emph{oriented} pseudo-Riemannian manifold $(M,g)$ admits a bundle of irreducible real Clifford modules if and only if it admits a spin structure. Therefore, $S$ is isomorphic to the irreducible real spinor bundle associated to the spin structure through the natural representation of $\mathrm{Spin}(p,p)$ induced by $\gamma$ on $\Sigma$. Due to this fact, it is appropriate to call the global sections of $S$ spinors.\medskip

The map $\Psi_\gamma\colon(\Lambda V^*,\diamond)\to(\End(\Sigma),\circ)$ defined in Section \ref{sec:preliminaries} extends to a unital isomorphism of bundles of algebras which we denote by the same symbol: $$\Psi_\gamma\coloneqq\gamma^S\circ\Psi\colon(\Lambda T^*M,\diamond)\to(\End(S),\circ).$$

This map allows us to identify bundles $(S,\gamma^S)$ of modules over $\mathrm{Cl}(M,g)$ with bundles of modules $(S,\Psi_\gamma)$ over the Kähler-Atiyah bundle $(\Lambda T^*M,\diamond)$ of $(M,g)$.

\begin{definition}
    Let $(S,\gamma^S)$ be a bundle of real Clifford modules on $(M,g)$. A fiberwise bilinear pairing $\mathscr{B}$ on $S$ is called \emph{admissible} if $\mathscr{B}_p\colon S_p\times S_p\to\R$ is an admissible bilinear pairing on the real Clifford module $(S_p,\gamma^S_p)$ for all $p\in M$. A \emph{paired spinor bundle} on $(M,g)$ is a tuple $(S,\gamma^S,\mathscr{B})$, where $(S,\gamma^{\smash{S}})$ is a bundle of real Clifford modules on $(M,g)$ and $\mathscr{B}$ is an admissible bilinear pairing on $S$. The paired spinor bundle $(S,\gamma^S,\mathscr{B})$ is called \emph{irreducible} if $(S,\gamma^S)$ is a bundle of irreducible real Clifford modules.
\end{definition}

Since $M$ is connected, the symmetry and adjoint type $\sigma,s\in\Z_2$ of the admissible bilinear pairings $\mathscr{B}_p$, which are non-degenerate by definition, are constant on $M$. Then $\sigma\in\Z_2$ and $s\in\Z_2$ are called the \emph{symmetry type} and \emph{adjoint type} of $\mathscr{B}$ or $(S,\gamma^{\smash{S}},\mathscr{B})$, respectively.\medskip

A bundle of irreducible real Clifford modules $(S,\gamma^S)$ on $(M,g)$ admits an admissible bilinear pairing $\mathscr{B}$ if $(M,g)$ is strongly orientable, that is, if the orthonormal frame bundle of $(M,g)$ reduces to the identity component $\mathrm{SO}_0(p,p)$ of $\mathrm{SO}(p,p)$. Therefore, a strongly oriented pseudo-Riemannian manifold $(M,g)$ admits an irreducible paired spinor bundle $(S,\gamma^S,\mathscr{B})$ if and only if it admits a strong spin structure. Hence, $S$ is the irreducible real spinor bundle associated to the strong spin structure via the natural representation of $\mathrm{Spin}_0(p,p)$ on $\Sigma$.

\begin{remark}
    A pseudo-Riemannian manifold is strongly spin if and only if it is spin and strongly orientable.
\end{remark}

\begin{definition}
    Let $(S,\gamma^S)$ be a bundle of real Clifford modules on $(M,g)$ equipped with a connection $\mathcal{D}$ and let $Q\in\Gamma(\End(S)\otimes W)$ be an endomorphism of $S$ taking values in a vector bundle $W$ on $M$. A section $\xi\in\Gamma(S)$ is a \emph{constrained parallel spinor} with respect to $(\mathcal{D},Q)$ if: $$\mathcal{D}\xi=0,\qquad Q(\xi)=0.$$
\end{definition}

Suppose that $(S,\gamma^S)$ is a bundle of irreducible real Clifford modules. Since it is associated to a spin structure, we can write $\mathcal{D}=\nabla^S-\mathcal{A}$ for a unique element $\mathcal{A}\in\Omega^1(M,\End(S))$, where $\nabla^S$ denotes the spinorial lift of the Levi-Civita connection $\nabla^g$ of $(M,g)$ to $S$. In this case, the equation satisfied by a constrained parallel spinor can be written as: $$\nabla^S\xi=\mathcal{A}(\xi),\qquad Q(\xi)=0,$$ and the solutions of this system are called constrained parallel spinors \emph{relative} to $(\mathcal{A},Q)$. Using connectedness of $M$ and the parallel transport of $\mathcal{D}$, the equation $\mathcal{D}\xi=0$ implies that the space of constrained parallel spinors relative to $(\mathcal{A},Q)$ is finite-dimensional and that a constrained parallel spinor which is not zero at some point of $M$ is automatically nowhere vanishing on $M$.\medskip

Let $(S,\gamma^S,\mathscr{B})$ be a paired spinor bundle on $(M,g)$. The admissible bilinear pairing $\mathscr{B}$ allows us to construct pointwise extensions to $M$ of the signed square maps $\mathcal{E}^\kappa\colon\Sigma\to\End(\Sigma)$ and the signed spinor square maps $\mathcal{E}^\kappa_\gamma\colon\Sigma\to\Lambda V^*$ of Section \ref{sec:preliminaries}. We denote these by the same symbol: $$\mathcal{E}^\kappa\colon S\to\End(S),\qquad \mathcal{E}^\kappa_\gamma\colon S\to\Lambda T^*M.$$

We have the following diagram of vector bundles: $$\begin{tikzcd}
S \arrow[rr, "\mathcal{E}^\kappa"] \arrow[dd, "\mathcal{E}^\kappa_\gamma"'] &  & \End(S)                                 \\
                                                                                 &  &                                              \\
(\Lambda T^*M,\diamond) \arrow[rr, "\Psi"] \arrow[rruu, "\Psi_\gamma"]                       &  & {\mathrm{Cl}(M,g)} \arrow[uu, "\gamma^S"']
\end{tikzcd}$$ which extends to maps of sections that we denote by the same symbol for ease of notation.

\begin{definition}
    The \emph{signed spinor square maps} of the paired spinor bundle $(S,\gamma^S,\mathscr{B})$ are the maps $\mathcal{E}^\kappa_\gamma\colon\Gamma(S)\to\Omega(M)$, $\kappa\in\Z_2$, induced by $\mathcal{E}^\kappa_\gamma$ on sections. The \emph{signed square} of a spinor $\xi\in\Gamma(S)$ is the exterior form $\mathcal{E}^\kappa_\gamma(\xi)\in\Omega(M)$. Elements in the image of $\mathcal{E}^\kappa_\gamma$ are generically called \emph{spinorial forms}.
\end{definition}

Let $(S,\gamma^S,\mathscr{B})$ be a paired spinor bundle on $(M,g)$ and let $W$ be any vector bundle on $M$. The \emph{symbol} of a section $Q\in\Gamma(\End(S)\otimes W)$ is: $$\mathfrak{q}\coloneqq(\Psi_\gamma\otimes\Id_W)^{-1}(Q)\in\Omega(M,W)\coloneqq\Gamma(\Lambda T^*M\otimes W),$$ where $\Id_W$ is the identity endomorphism of $W$. Now assume that $(S,\gamma^S,\mathscr{B})$ is an irreducible paired spinor bundle on $(M,g)$, so it is associated to a strong spin structure. Set $\mathcal{A}\coloneqq\nabla^S-\mathcal{D}\in\Omega^1(M,\End(S))$ and let: $$\mathfrak{a}\coloneqq(\Psi_\gamma\otimes\Id_{T^*M})^{-1}(\mathcal{A})\in\Omega^1(M,\Lambda T^*M)$$ be the symbol of $\mathcal{A}$. We arrive to the final characterization of constrained parallel spinors in terms of their associated spinorial forms.

\begin{theorem}[{\cite[Thm.\ 4.26]{Cortes_Lazaroiu_Shahbazi2021}}]\label{thm:characterizaton_constrained_parallel_spinors}
    Let $(S,\gamma^S,\mathscr{B})$ be an irreducible paired spinor bundle on $(M,g)$ of symmetry type $\sigma\in\Z_2$ and adjoint type $s\in\Z_2$. Let $\mathcal{A}\in\Omega^1(M,\End(S))$ and $Q\in\Gamma(\End(S)\otimes W)$ for a vector bundle $W$ on $M$. Then the following statements are equivalent: \begin{enumerate}
        \item There exists a nowhere vanishing constrained parallel spinor $\xi\in\Gamma(S)$ relative to $(\mathcal{A},Q)$.
        \item There exists a nowhere vanishing exterior form $\alpha\in\Omega(M)$ which satisfies the following algebraic and differential equations: \begin{gather*}
            \alpha\diamond\beta\diamond\alpha=2^{\frac{d}{2}}(\alpha\diamond\beta)^{(0)}\alpha,\qquad(\pi^{\frac{1-s}{2}}\circ\tau)(\alpha)=\sigma\alpha,\\
            \nabla^g\alpha=\mathfrak{a}\diamond\alpha+\alpha\diamond(\pi^\frac{1-s}{2}\circ\tau)(\mathfrak{a}),\qquad \mathfrak{q}\diamond\alpha=0
        \end{gather*} for every exterior form $\beta\in\Omega(M)$ or, equivalently, satisfies the following equations: \begin{gather*}
            \alpha\diamond\alpha=2^{\frac{d}{2}}\alpha^{(0)}\alpha,\qquad\alpha\diamond\beta\diamond\alpha=2^{\frac{d}{2}}(\alpha\diamond\beta)^{(0)}\alpha,\qquad(\pi^{\frac{1-s}{2}}\circ\tau)(\alpha)=\sigma\alpha,\\
            \nabla^g\alpha=\mathfrak{a}\diamond\alpha+\alpha\diamond(\pi^\frac{1-s}{2}\circ\tau)(\mathfrak{a}),\qquad \mathfrak{q}\diamond\alpha=0
        \end{gather*} for an exterior form $\beta\in\Omega(M)$ satisfying $(\alpha\diamond\beta)^{(0)}\neq0$.
    \end{enumerate}

    If in addition the spinor $\xi\in\Gamma(S)$ is chiral of chirality $\mu\in\Z_2$, then we have to add the equation: $$*(\pi\circ\tau)(\alpha)=\mu\alpha.$$
    
    The exterior form $\alpha\in\Omega(M)$ as above is determined by $\xi\in\Gamma(S)$ through the relation: $$\alpha=\mathcal{E}^\kappa_\gamma(\xi)$$ for some $\kappa\in\Z_2$. Moreover, $\alpha\in\Omega(M)$ satisfying the algebraic equations of the theorem determines a nowhere vanishing spinor $\xi\in\Gamma(S)$, unique up to sign, satisfying the relation $\alpha=\mathcal{E}^\kappa_\gamma(\xi)$.
\end{theorem}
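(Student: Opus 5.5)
The statement is Theorem \ref{thm:characterizaton_constrained_parallel_spinors}, which globalizes Theorem \ref{thm:characterization_spinorial_forms} and Lemma \ref{lemma:constrained_spinorial_forms}. The plan is to work pointwise for the algebraic equations and then handle the differential equation by differentiating the square map. The identity $\mathcal{E}^\kappa(\xi)=\kappa\,\xi\otimes\mathscr{B}(-,\xi)$ is tensorial and makes sense fibrewise on $S$. Over each point $m\in M$ we choose an unbased isomorphism of paired Clifford modules $(S_m,\gamma^S_m,\mathscr{B}_m)\cong(\Sigma,\gamma,\mathscr{B})$, which exists by uniqueness up to scale of admissible pairings of fixed adjoint type. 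This transfers Theorem \ref{thm:characterization_spinorial_forms} fibrewise. Consequently, a nowhere vanishing section $\xi$ gives a nowhere vanishing $\alpha=\mathcal{E}^\kappa_\gamma(\xi)$ satisfying the algebraic equations. Lemma \ref{lemma:constrained_spinorial_forms}, applied pointwise componentwise in $W$, shows that $Q(\xi)=0$ is equivalent to $\mathfrak{q}\diamond\alpha=0$. Chirality is equivalent to $*(\pi\circ\tau)(\alpha)=\mu\alpha$, as shown before Corollary \ref{cor:characterization_chiral_spinorial_forms}.

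For the differential equation, I use that $\nabla^S$ is compatible with $\gamma^S$ and $\mathscr{B}$; the latter holds because $\mathscr{B}$ is $\mathrm{Spin}_0$-invariant and $\nabla^S$ is induced from the strong spin structure. It follows that $\Psi_\gamma$ intertwines $\nabla^g$ on $\Lambda T^*M$ with the induced connection on $\End(S)$. Differentiating $\mathcal{E}^\kappa(\xi)$ gives
\[
\nabla^{\End}_X\mathcal{E}^\kappa(\xi)=\kappa\,\nabla^S_X\xi\otimes\mathscr{B}(-,\xi)+\kappa\,\xi\otimes\mathscr{B}(-,\nabla^S_X\xi).
\]
Substituting $\nabla^S_X\xi=\mathcal{A}_X\xi$, the first term becomes $\mathcal{A}_X\circ\mathcal{E}^\kappa(\xi)$. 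The second term becomes $\mathcal{E}^\kappa(\xi)\circ\mathcal{A}_X^t$, where $\mathcal{A}_X^t$ is the $\mathscr{B}$-adjoint. By admissibility, the $\mathscr{B}$-adjoint of $\Psi_\gamma(\mathfrak{a}_X)$ is $\Psi_\gamma((\pi^{\frac{1-s}{2}}\circ\tau)(\mathfrak{a}_X))$. Applying $\Psi_\gamma^{-1}$ yields
\[
\nabla^g_X\alpha=\mathfrak{a}_X\diamond\alpha+\alpha\diamond(\pi^{\frac{1-s}{2}}\circ\tau)(\mathfrak{a}_X),
\]
which proves (1)$\Rightarrow$(2).

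The converse is the main obstacle: I must recover a global smooth $\xi$ from $\alpha$ without simple connectedness. By Theorem \ref{thm:characterization_spinorial_forms} applied pointwise, each $\alpha_m$ equals $\mathcal{E}^\kappa(\xi_m)$ with $\xi_m$ unique up to sign, and $\kappa$ is locally constant since $\alpha$ is nowhere zero. Thus $\Psi_\gamma(\alpha)$ is a rank-one endomorphism field, and its image defines a real line subbundle $L\subset S$. I would use the differential equation to show that the connection $\mathcal{D}=\nabla^S-\mathcal{A}$ preserves $L$. The intertwining computation above shows that $\mathcal{D}$-parallel transport maps $\Psi_\gamma(\alpha)$ to itself, because both sides satisfy the same linear ODE along curves. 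Hence parallel transport maps $\xi_m$ to $\pm\xi_{m'}$. The point I would check carefully is the normalization: writing $\xi=f\eta$ locally with $\eta$ spanning $L$, equality of the squares forces $f$ to be constant in absolute value. This makes $\mathcal{D}\xi=0$ locally, and the holonomy acts on the fibre of $L$ by $\pm1$.

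Removing this sign ambiguity without a trivializing cover is delicate. I would argue that, since $\alpha=\kappa\,\xi\otimes\mathscr{B}(-,\xi)$ and parallel transport preserves $\alpha$ exactly, it preserves the set $\{\pm\xi_m\}$. Continuity along loops together with nonvanishing then shows that transport fixes $\xi_m$ up to a locally constant sign. If this sign cannot be excluded, I would fall back on the statement's own wording that $\xi$ is determined up to sign and check orientability of $L$ via $\mathscr{B}$ and $\alpha$. Finally, $\mathfrak{q}\diamond\alpha=0$ gives $Q(\xi)=0$ by Lemma \ref{lemma:constrained_spinorial_forms}, and the chirality equation gives $\xi\in\Gamma(S^\mu)$.
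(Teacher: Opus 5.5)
\paragraph{The forward direction.} Your proof of (1)$\Rightarrow$(2) is sound. You transfer Theorem~\ref{thm:characterization_spinorial_forms} and Lemma~\ref{lemma:constrained_spinorial_forms} fibrewise, then differentiate $\kappa\,\xi\otimes\mathscr{B}(-,\xi)$ and use admissibility to identify the $\mathscr{B}$-adjoint of $\Psi_\gamma(\mathfrak{a}_X)$. That is the natural argument; the paper gives no proof and quotes the result from \cite{Cortes_Lazaroiu_Shahbazi2021}.

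One caveat: your reason for $\mathscr{B}$ being $\nabla^S$-parallel only covers the pairing $\mathscr{B}_0$ induced from $\Sigma$. The definition of a paired spinor bundle also allows $\mathscr{B}=f\mathscr{B}_0$ with $f$ a nowhere-vanishing function. In that case $\nabla^g_X\alpha$ acquires an extra term $X(\log|f|)\,\alpha$, so parallelism of $\mathscr{B}$ must be assumed.

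\paragraph{The gap in the converse.} The genuine gap is in (2)$\Rightarrow$(1), exactly at the sign you flag, and neither continuity nor $\mathscr{B}$ and $\alpha$ can remove it. Both $\mathscr{B}$ and $\alpha$ are even in $\xi$, so they cannot detect whether the line bundle $L=\Im\Psi_\gamma(\alpha)\subset S$ is orientable. Moreover, the flat connection that $\mathcal{D}$ induces on $L$ can have holonomy $-1$ along a loop. In fact (2)$\Rightarrow$(1) and the ``Moreover'' clause fail in the stated generality.

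Here is a counterexample. Take a flat torus $T^{2p}=\R^{2p}/\Z^{2p}$ of signature $(p,p)$ with a spin structure that is non-trivial along some loop, and set $\mathcal{A}=0$, $Q=0$.
\begin{itemize}
\item $\nabla^S$ has holonomy $-\Id$ along that loop, so $S$ has no non-zero parallel section, and (1) fails.
\item The square of a non-zero constant spinor on $\R^{2p}$ is a constant form, invariant under $\xi\mapsto-\xi$. It descends to a nowhere-vanishing $\alpha$ on $T^{2p}$ satisfying every equation in (2).
\item No global section of $S$ squares to $\alpha$, since such a section would trivialize the non-orientable $L$.
\end{itemize}

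Conceptually, (2) is unchanged if $S$ is replaced by $S\otimes\ell$ for a flat $\Z_2$-line bundle $\ell$, that is, if the spin structure is changed by a class in $H^1(M,\Z_2)$. This is because $\End(S\otimes\ell)=\End(S)$ and the squares agree. Condition (1), by contrast, does change. So the converse needs an extra hypothesis such as $w_1(L)=0$ (for instance $H^1(M,\Z_2)=0$), or else its conclusion must be weakened to the existence of such a spinor in $S\otimes L$.

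\paragraph{Finishing under that hypothesis.} Your outline closes more cleanly than through the ``$|f|$ constant'' step.
\begin{itemize}
\item Take a local smooth $\xi$ with $\mathcal{E}^\kappa(\xi)=\Psi_\gamma(\alpha)$. Your computation turns the differential equation into $u\otimes\mathscr{B}(-,\xi)+\xi\otimes\mathscr{B}(-,u)=0$ with $u=\mathcal{D}_X\xi$.
\item Non-degeneracy of $\mathscr{B}$ forces $u=c\,\xi$, and then $2c\,\xi\otimes\mathscr{B}(-,\xi)=0$ gives $c=0$.
\item Hence local square roots are $\mathcal{D}$-parallel, and orientability of $L$ is precisely what lets you choose their signs consistently on all of $M$.
\end{itemize}
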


\subsection{Torsion parallel pure spinors}

Let $(M,g)$ be a strongly spin manifold of signature $(p,p)$ and let $(S,\gamma^S)$ be a bundle of irreducible real Clifford modules on $(M,g)$. Denote by $S=S^+\oplus S^-$ the chiral decomposition of $S$ with respect to the volume form $\nu$ of $(M,g)$. Assume that $(S,\gamma^S)$ is equipped with an admissible bilinear pairing $\mathscr{B}$ of positive adjoint type and symmetry type $\sigma\in\Z_2$. By Proposition \ref{prop:admissible_bilinear_pairings} we have that: $$\sigma=\begin{cases}
    +1&\mbox{ if }p\equiv_40,1,\\
    -1&\mbox{ if }p\equiv_42,3.
\end{cases}$$

The global version of Theorem \ref{thm:square_(p,p)} is as follows.

\begin{theorem}\label{thm:global_square_(p,p)}
    Let $(M,g)$ be a strongly spin manifold of signature $(p,p)$. An exterior form $\alpha\in\Omega(M)$ is the signed square of a nowhere vanishing irreducible real pure spinor $\xi\in\Gamma(S^\mu)$ of chirality $\mu\in\Z_2$ if and only if it is a decomposable (anti-)self-dual $p$-form. That is, around every point in $M$ we have: $$\alpha=\theta_1\wedge\cdots\wedge\theta_p,\qquad *\alpha=(-1)^{\binom{p+1}{2}}\mu\alpha,$$ where $\theta_1,\ldots,\theta_p$ are locally defined one-forms.
\end{theorem}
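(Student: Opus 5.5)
The plan is to reduce the global statement to the pointwise Theorem \ref{thm:square_(p,p)}, using that the signed spinor square maps $\mathcal{E}^\kappa_\gamma\colon S\to\Lambda T^*M$ are fiberwise extensions of the algebraic ones. Fix $p_0\in M$ and choose a local section of the strong spin structure (a local oriented orthonormal frame lifted to $\mathrm{Spin}_0(p,p)$) on a neighbourhood $U$ of $p_0$. This trivializes $S|_U\cong U\times\Sigma$ as Clifford modules, identifies $\mathscr{B}$ with an admissible pairing on $\Sigma$ of the same symmetry and adjoint type (unique up to scale by the uniqueness remark), and identifies $\nu$ and the Hodge star fiberwise with their model counterparts. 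Under this identification, pointwise purity of $\xi_x$ (i.e.\ $\dim\mathrm{Ann}(\xi_x)=p$) and chirality $\mu$ become the algebraic notions of Section \ref{sec:neutral_square}.

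For the forward direction, let $\xi\in\Gamma(S^\mu)$ be nowhere vanishing and pure at every point, and set $\alpha=\mathcal{E}^\kappa_\gamma(\xi)$. At each $x\in M$, Theorem \ref{thm:square_(p,p)} gives $\alpha_x$ as a decomposable $p$-form with $*\alpha_x=(-1)^{\binom{p+1}{2}}\mu\alpha_x$. The only issue is obtaining the $\theta_i$ as \emph{smooth local} one-forms, and this is where I expect the main obstacle. I would show that $x\mapsto\mathrm{Ann}(\xi_x)\subset T^*_xM$ is a smooth rank-$p$ distribution. By Lemma \ref{lemma:constrained_spinorial_forms} it is the kernel of the smooth bundle map $T^*M\to\Lambda T^*M$, $\theta\mapsto\theta\diamond\alpha$, and by purity this kernel has constant rank $p$, so it is a smooth subbundle. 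Equivalently, since $\alpha$ is nowhere vanishing and pointwise decomposable of constant degree, it is the kernel of $\theta\mapsto\theta\wedge\alpha$. Choosing a smooth local frame $\theta_1,\ldots,\theta_p$ of this subbundle near $p_0$, we get $\alpha=f\,\theta_1\wedge\cdots\wedge\theta_p$ with $f$ smooth and nonvanishing, and absorbing $f$ into $\theta_1$ yields the claim. The duality condition is pointwise and therefore already holds.

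For the converse, let $\alpha$ be locally $\theta_1\wedge\cdots\wedge\theta_p$ with the stated duality. At each point the converse part of Theorem \ref{thm:square_(p,p)} shows that $\alpha_x$ is the signed square of a pure spinor of chirality $\mu$. In particular $\alpha$ satisfies the algebraic equations of Corollary \ref{cor:characterization_chiral_spinorial_forms} pointwise, together with $\tau(\alpha)=\sigma\alpha$, which holds automatically for a pure $p$-form with the stated $\sigma$. The last part of Theorem \ref{thm:characterizaton_constrained_parallel_spinors}, taken with no differential conditions, then produces a nowhere vanishing spinor $\xi\in\Gamma(S)$, unique up to sign, with $\alpha=\mathcal{E}^\kappa_\gamma(\xi)$. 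If one wants to argue directly, smoothness follows locally from $\xi\otimes\mathscr{B}(-,\xi)=\kappa\Psi_\gamma(\alpha)$: pick $\eta$ with $\mathscr{B}(\xi,\eta)\neq0$ and set $\xi\propto\Psi_\gamma(\alpha)\eta$, with the global sign fixed by connectedness and continuity. Chirality and purity of $\xi$ then hold pointwise by the algebraic theorem, and by Lemma \ref{lemma:constrained_spinorial_forms} they correspond to $*(\pi\circ\tau)(\alpha)=\mu\alpha$ and $\theta_i\diamond\alpha=0$. This shows that $\xi\in\Gamma(S^\mu)$ is pure, which completes the proof.
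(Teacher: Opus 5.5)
Your plan (apply Theorem \ref{thm:square_(p,p)} fiberwise, then use the last clause of Theorem \ref{thm:characterizaton_constrained_parallel_spinors} for the converse) is how the paper intends this statement; the paper gives no separate proof. Your forward direction is sound, and the constant-rank argument that produces smooth local $\theta_i$ supplies a detail the paper leaves implicit.

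The gap is in the converse, at the step ``with the global sign fixed by connectedness and continuity''. Connectedness gives uniqueness up to one global sign \emph{once} a global solution exists, but it does not produce one. $\Psi_\gamma(\alpha)$ is a smooth field of rank-one endomorphisms, so its image is a real line subbundle $L\subset S$. A nowhere vanishing $\xi\in\Gamma(S)$ with $\mathcal{E}^\kappa_\gamma(\xi)=\alpha$ exists if and only if $L$ is trivial: your local choices $\pm\xi$ glue only up to the monodromy of $L$. Citing the final clause of Theorem \ref{thm:characterizaton_constrained_parallel_spinors} does not avoid this, because for a fixed $S$ that clause needs the same triviality of $L$.

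The failure is genuine. Take the flat torus $\R^4/\Z^4$ with $g=(\d x^1)^2+(\d x^2)^2-(\d x^3)^2-(\d x^4)^2$ and the spin structure twisted by a non-trivial character $\chi\colon\Z^4\to\Z_2$. Sections of $S$ are then maps $\xi\colon\R^4\to\Sigma$ with $\xi(x+n)=\chi(n)\xi(x)$, while $\Lambda T^*M$ and $\End(S)$ are trivialized compatibly by constant frames. The constant form $\alpha=(\d x^1+\d x^4)\wedge(\d x^2+\d x^3)$ is nowhere vanishing, decomposable and (anti-)self-dual, and $\Psi_\gamma(\alpha)$ is constant. Any $\xi$ with signed square $\alpha$ would therefore be a continuous map $\R^4\to\{\pm\xi_0\}$, hence constant, which contradicts $\xi(x+n)=-\xi(x)$ whenever $\chi(n)=-1$.

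To make the converse correct you must do one of two things:
\begin{enumerate}
\item add a hypothesis such as $H^1(M;\Z_2)=0$, or directly $w_1(L)=0$;
\item allow $S$ to be replaced by $S\otimes L$. This is again an irreducible paired spinor bundle (another strong spin structure) with the same K\"ahler--Atiyah identification, and in it the image of $\Psi_\gamma(\alpha)$ is the trivial bundle $L\otimes L$.
\end{enumerate}

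Two smaller points. First, in the converse you must take $\alpha$ nowhere vanishing, i.e.\ the local $\theta_i$ pointwise linearly independent, since a pure spinor is non-zero. Second, you should note that the pointwise sign $\kappa$ is locally constant. For instance, $\kappa=\sigma\,\mathrm{sign}\,\mathscr{B}(\Psi_\gamma(\alpha)\eta,\eta)$ for any local section $\eta$ with $\Psi_\gamma(\alpha)\eta\neq0$, so a single $\kappa$ works on all of $M$.
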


Note that a $p$-form as in Theorem \ref{thm:global_square_(p,p)} determines a maximally isotropic rank-$p$ vector bundle: $$\mathcal{U}\coloneqq\{\theta\in T^*M\mid \theta\wedge\alpha=0\}\subset T^*M,$$ which is topologically trivial if and only if $\alpha$ can be written as $\alpha=\theta_1\wedge\cdots\wedge\theta_p$ in terms of globally defined one-forms $\theta_1,\ldots,\theta_p\in\Omega^1(M)$.\medskip

We focus on the study of pure spinors in signature $(p,p)$ parallel under a metric connection with torsion. For the following, see for instance \cite{Agricola2006,ShahbaziThesis}. Let $(M,g)$ be a pseudo-Riemannian manifold and denote by $\nabla^g$ the Levi-Civita connection of $(M,g)$. Every other metric connection $\nabla$ on $(M,g)$ can be written as follows: $$\nabla_XY=\nabla^g_XY+A(X,Y)$$ in terms of a uniquely defined tensor $A\in\Gamma(T^*M\otimes T^*M\otimes TM)$ that satisfies: $$g(A(X,Y),Z)+g(Y,A(X,Z))=0$$ for all $X,Y,Z\in\Gamma(TM)$. Hence $A\in\Gamma(T^*M\otimes\mathfrak{so}(M,g))$, where $\mathfrak{so}(M,g)$ denotes the vector bundle of $g$-skew-symmetric endomorphisms of $TM$. We will refer to $A$ as the \emph{contorsion tensor} of the given metric connection with torsion on $(M,g)$, which we will consequently denote by $\nabla^{g,A}$. The space of metric contorsion tensors on $(M,g)$ is then identified with the space of sections $\Gamma(T^*M\otimes\mathfrak{so}(M,g))$. We introduce the notation: $$A_X\coloneqq A(X)\in\Gamma(\mathfrak{so}(M,g))$$ for all $X\in\Gamma(TM)$. We will identify $\Gamma(T^*M\otimes\mathfrak{so}(M,g))\cong\Omega^1(M,\Lambda^2T^*M)$ by means of the musical isomorphism defined by the underlying pseudo-Riemannian metric.

\begin{remark}
    We have described the space of connections compatible with a given pseudo-Riemannian metric using the notion of \emph{contorsion}. Equivalently, we could have used the notion of \emph{torsion}, which for a metric connection contains the same information as the contorsion. Given a metric connection $\nabla^{g,A}$ with contorsion $A$, its torsion is given by: $$T(X,Y)=A(X,Y)-A(Y,X).$$

    Hence, $T\in\Gamma(\Lambda^2T^*M\otimes TM)$ and the previous formula identifies the space of contorsion tensors with the space of torsion tensors.
\end{remark}

Let $(M,g)$ be a strongly spin pseudo-Riemannian manifold and let $(S,\gamma^S,\mathscr{B})$ be an irreducible paired spinor bundle on $(M,g)$. Since the connection $\nabla^{g,A}$ is metric and $(M,g)$ is spin, $\nabla^{g,A}$ lifts canonically to $S$ and defines a connection on it, which we denote by $\nabla^{S,A}$. More explicitly, for every spinor $\xi\in\Gamma(S)$ we have: $$\nabla^{S,A}_X\xi=\nabla^S_X\xi+\tfrac{1}{2}A_X\cdot\xi=\nabla^S_X\xi+\tfrac{1}{2}\Psi_\gamma(A_X)\xi,$$ where, using the pseudo-Riemannian metric $g$, we have identified $A_X\in\Gamma(\mathfrak{so}(M,g))$ with $A_X\in\Omega^2(M)$.

\begin{definition}
    Let $(M,g)$ be a strongly spin pseudo-Riemannian manifold and let $(S,\gamma^S,\mathscr{B})$ be an irreducible paired spinor bundle on $(M,g)$. An irreducible real spinor $\xi\in\Gamma(S)$ is called \emph{torsion parallel} with contorsion $A\in\Gamma(T^*M\otimes\mathfrak{so}(M,g))$ if $\nabla^{S,A}\xi=0$.
\end{definition}

Torsion parallel spinors define a particular class of parallel spinors. More precisely, we have that a torsion parallel spinor with contorsion $A\in\Omega^1(M,\Lambda^2T^*M)$ is a parallel spinor relative to an $\End(S)$-valued one-form $\mathcal{A}\in\Omega^1(M,\End(S))$ whose symbol is given by: $$\mathfrak{a}_X=-\tfrac{1}{2}A_X\in\Omega^2(M).$$

\begin{theorem}\label{thm:torsion_parallel}
    Let $(M,g)$ be a strongly spin manifold of signature $(p,p)$ and let $\xi\in\Gamma(S^\mu)$ be a nowhere vanishing irreducible real pure spinor of chirality $\mu\in\Z_2$. Then $\nabla^{S,A}\xi=0$ if and only if: $$\nabla^g_X\alpha=A_X\triangle_1\alpha$$ for all $X\in\Gamma(TM)$, where $\alpha\in\Omega^p(M)$ is the signed square of $\xi$. Moreover, the rank-$p$ vector bundle $\mathcal{U}$ is preserved by $\nabla^{g,A}$.
\end{theorem}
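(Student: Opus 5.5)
The plan is to specialize Theorem \ref{thm:characterizaton_constrained_parallel_spinors} to the case $Q=0$, positive adjoint type $s=+1$, and symbol $\mathfrak{a}_X=-\tfrac12 A_X$. By Theorem \ref{thm:global_square_(p,p)}, the algebraic equations in that theorem are equivalent to $\alpha$ being a decomposable (anti-)self-dual $p$-form, so the content to be proved is purely differential. With $s=+1$ the differential equation reads
\[
\nabla^g_X\alpha=-\tfrac12 A_X\diamond\alpha-\tfrac12\alpha\diamond\tau(A_X)=-\tfrac12\left(A_X\diamond\alpha-\alpha\diamond A_X\right),
\]
since $\tau(A_X)=-A_X$ for a two-form.

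The next step is to expand the graded commutator of a two-form with a $p$-form using the generalized products. For $\omega\in\Omega^2$ and $\alpha\in\Omega^p$ one has
\[
\omega\diamond\alpha=\omega\wedge\alpha+(-1)^{1+2\cdot1}\omega\triangle_1\alpha+(-1)^{3+4}\omega\triangle_2\alpha.
\]
Swapping the factors with Proposition \ref{prop:properties_triangles}, the products $\wedge$ and $\triangle_2$ are symmetric, so they cancel in the commutator. The $\triangle_1$ term changes sign, because $\omega\triangle_1\alpha=(-1)^{(2-1)(p-1)}\alpha\triangle_1\omega$ combined with the sign $(-1)^{\binom{2}{2}+p}$ in the expansion of $\alpha\diamond\omega$. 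I expect this gives $A_X\diamond\alpha-\alpha\diamond A_X=-2A_X\triangle_1\alpha$, up to a sign that needs careful bookkeeping. Hence the equation becomes $\nabla^g_X\alpha=A_X\triangle_1\alpha$, which is the usual derivation action of $\mathfrak{so}$ on forms; the sign is fixed by matching it with $\nabla^{g,A}\alpha=0$. This sign bookkeeping is the main technical point. As a cross-check, I would verify that $A_X\triangle_1\alpha=-A_X\cdot\alpha$, the natural action, so that the statement is equivalent to $\nabla^{g,A}\alpha=0$.

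For the converse, a form satisfying this equation is pointwise the square of a pure spinor by Theorem \ref{thm:global_square_(p,p)}. Theorem \ref{thm:characterizaton_constrained_parallel_spinors} then yields a spinor, unique up to sign, that is parallel; this spinor is $\pm\xi$. No simple connectedness is needed, since the theorem is local and algebraic.

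For the last claim, I would use $\nabla^{g,A}\alpha=0$. Take a local section $\theta$ of $\mathcal{U}$, so that $\theta\wedge\alpha=0$. Differentiating with the metric connection $\nabla^{g,A}$, which is a derivation commuting with $\wedge$, gives $(\nabla^{g,A}_X\theta)\wedge\alpha=0$, that is, $\nabla^{g,A}_X\theta\in\mathcal{U}$. The main obstacle is keeping the signs in the generalized-product expansion consistent, together with verifying that the derivation action of $A_X$ on $\alpha$ equals $\mp A_X\triangle_1\alpha$.
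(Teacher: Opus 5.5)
Your proposal is correct and follows the paper's proof: specializing Theorem~\ref{thm:characterizaton_constrained_parallel_spinors} with $s=+1$ and $\mathfrak{a}_X=-\tfrac{1}{2}A_X$, the sign bookkeeping you set up already gives $A_X\diamond\alpha-\alpha\diamond A_X=-2A_X\triangle_1\alpha$ with no ambiguity, hence $\nabla^g_X\alpha=A_X\triangle_1\alpha$. For the last claim the paper makes the same Leibniz argument in a local frame (differentiating $\alpha\wedge\theta_j=0$ and using that $A_X\triangle_1$ acts as a derivation), which amounts to your observation that $\nabla^{g,A}_X=\nabla^g_X-A_X\triangle_1$ on forms annihilates $\alpha$.
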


\begin{proof}
    By Theorem \ref{thm:global_square_(p,p)}, the signed square of the pure spinor $\xi\in\Gamma(S^\mu)$ is a decomposable (anti-)self-dual $p$-form $\alpha\in\Omega^p(M)$. In this case $\mathcal{A}_X=-\frac{1}{2}\Psi_\gamma(A_X)\in\Gamma(\End(S))$, so its symbol is just $\mathfrak{a}_X=-\frac{1}{2}A_X\in\Omega^2(M)$. We compute: \begin{align*}
        \mathfrak{a}_X\diamond\alpha&=\mathfrak{a}_X\wedge\alpha-\mathfrak{a}_X\triangle_1\alpha-\mathfrak{a}_X\triangle_2\alpha,\\
        \alpha\diamond\tau(\mathfrak{a}_X)&=-(\alpha\wedge\mathfrak{a}_X+(-1)^{p+1}\alpha\triangle_1\mathfrak{a}_X-\alpha\triangle_2\mathfrak{a}_X)\\
        &=-\mathfrak{a}_X\wedge\alpha-\mathfrak{a}_X\triangle_1\alpha+\mathfrak{a}_X\triangle_2\alpha.
    \end{align*}
    
    By Theorem \ref{thm:characterizaton_constrained_parallel_spinors}, the spinor $\xi$ satisfies $\nabla^S_X\xi=\mathcal{A}_X(\xi)$ if and only if $\nabla^g_X\alpha=\mathfrak{a}_X\diamond\alpha+\alpha\diamond\tau(\mathfrak{a}_X)$. Then we obtain: $$\nabla^g_X\alpha=-2\mathfrak{a}_X\triangle_1\alpha=A_X\triangle_1\alpha.$$
    
    Now let $\alpha=\theta_1\wedge\cdots\wedge\theta_p$ for locally defined one-forms $\theta_1,\ldots,\theta_p$. Then $\nabla^g_X\alpha=A_X\triangle_1\alpha$ becomes: $$\nabla^g_X(\theta_1\wedge\cdots\wedge\theta_p)=\sum_{k=1}^p(-1)^{k+1}(A_X\triangle_1\theta_k)\wedge\theta_1\wedge\cdots\wedge\widehat{\theta}_k\wedge\cdots\wedge\theta_p,$$ where the \emph{hat} indicates the omitted factor. Fix $j\in\{1,\ldots,p\}$ and note that $\alpha\wedge\theta_j=0$, thus: $$0=\nabla^g_X(\alpha\wedge\theta_j)=\nabla^g_X\alpha\wedge\theta_j+\alpha\wedge\nabla^g_X\theta_j$$ and: $$\nabla^g_X\alpha\wedge\theta_j=(-1)^{p+1}(A_X\triangle_1\theta_j)\wedge\alpha=-\alpha\wedge(A_X\triangle_1\theta_j).$$

    Therefore we get $\alpha\wedge(\nabla^g_X\theta_j-A_X\triangle_1\theta_j)=0$ or, equivalently: $$\nabla^{g,A}_X\theta_j=\nabla^g_X\theta_j-A_X\triangle_1\theta_j\in\Gamma(\mathcal{U}).$$

    Since $j\in\{1,\ldots,p\}$ was arbitrary we conclude that $\nabla^{g,A}$ preserves $\mathcal{U}$.
\end{proof}

\begin{remark}
    We can think of Theorem \ref{thm:torsion_parallel} as an extension of \cite[Prop.\ 5.3]{Kath_2000} in signature $(p,p)$ to the case of a metric connection with torsion, without the need to impose the underlying manifold to be simply connected.
\end{remark}

As explained before, the contorsion tensor of a metric connection is a section of $T^*M\otimes\mathfrak{so}(M,g)$ or, equivalently, a section of $T^*M\otimes\Lambda^2T^*M$. For $d=2p\geq3$, the space $T^*M\otimes\Lambda^2T^*M$ is reducible under the action of $\mathrm{O}(p,p)$, thus it splits into the sum of three irreducible representations: $$T^*M\otimes\Lambda^2T^*M\cong T^*M\oplus\Lambda^3T^*M\oplus\mathcal{T},$$ where: $$\mathcal{T}\coloneqq\{A\in T^*M\otimes\Lambda^2T^*M\mid A(X,Y,Z)+A(Y,Z,X)+A(Z,X,Y)=0,\;\sum_{i=1}^d\varepsilon_iA(e_i,e_i,X)=0\},$$ where $\{e_1,\ldots,e_d\}$ is any orthonormal frame of $(M,g)$ and $\varepsilon_i\coloneqq g(e_i,e_i)\in\Z_2$. In particular, for every contorsion tensor $A\in\Omega^1(M,\Lambda^2T^*M)$ there exists a unique one-form $\zeta\in\Omega^1(M)$, a unique three-form $H\in\Omega^3(M)$, and a tensor $\Theta\in\Gamma(\mathcal{T})$ such that: $$A_X=X^\flat\wedge\zeta+\tfrac{1}{2}\iota_XH+\Theta_X\in\Omega^2(M)$$ for all $X\in\Gamma(TM)$. We now consider the particular case when the connection $\nabla^{g,A}$ has totally skew-symmetric torsion, that is, $A=\frac{1}{2}H\in\Omega^3(M)$. We denote such connection by $\nabla^{g,H}$ and by $\nabla^{S,H}$ its lift to the vector bundle $S$.

\begin{definition}
    Let $(M,g)$ be a strongly spin pseudo-Riemannian manifold and let $(S,\gamma^S,\mathscr{B})$ be an irreducible paired spinor bundle on $(M,g)$. An irreducible real spinor $\xi\in\Gamma(S)$ is called \emph{skew-torsion parallel} with torsion $H\in\Omega^3(M)$ if $\nabla^{S,H}\xi=0$.
\end{definition}

As a direct consequence of Theorem \ref{thm:torsion_parallel}, we have the following result.

\begin{corollary}\label{cor:skew_torsion_(p,p)}
    Let $(M,g)$ be a strongly spin manifold of signature $(p,p)$ and let $\xi\in\Gamma(S^\mu)$ be a nowhere vanishing irreducible real pure spinor of chirality $\mu\in\Z_2$. Then $\nabla^{S,H}\xi=0$ if and only if: $$\nabla^g_X\alpha=\tfrac{1}{2}(\iota_XH)\triangle_1\alpha$$ for all $X\in\Gamma(TM)$, where $\alpha\in\Omega^p(M)$ is the signed square of $\xi$.
\end{corollary}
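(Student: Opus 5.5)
The plan is to specialize Theorem \ref{thm:torsion_parallel} to the totally skew-symmetric case, so the only work is to identify the contorsion. By definition, $\nabla^{g,H}$ is the metric connection whose contorsion is $A=\tfrac{1}{2}H$, viewed as a section of $T^*M\otimes\Lambda^2T^*M$. Its value on a vector field $X$ is therefore the two-form $A_X=\tfrac{1}{2}\iota_XH$. This agrees with the decomposition $A_X=X^\flat\wedge\zeta+\tfrac{1}{2}\iota_XH+\Theta_X$ with $\zeta=0$ and $\Theta=0$. The spinorial lift $\nabla^{S,H}$ is then exactly $\nabla^{S,A}$ for this $A$, namely $\nabla^{S,H}_X\xi=\nabla^S_X\xi+\tfrac{1}{4}(\iota_XH)\cdot\xi$.

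With this identification, I apply Theorem \ref{thm:torsion_parallel} to the nowhere vanishing pure chiral spinor $\xi$, with $\alpha\in\Omega^p(M)$ its signed square as given by Theorem \ref{thm:global_square_(p,p)}. The theorem says $\nabla^{S,A}\xi=0$ if and only if $\nabla^g_X\alpha=A_X\triangle_1\alpha$ for all $X$. Substituting $A_X=\tfrac{1}{2}\iota_XH$ and using bilinearity of $\triangle_1$ gives $\nabla^g_X\alpha=\tfrac{1}{2}(\iota_XH)\triangle_1\alpha$, which is the claimed equivalence.

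The only point needing care is the normalization. I must check that the convention $A=\tfrac12 H$ used when defining $\nabla^{g,H}$ produces $A_X=\tfrac12\iota_XH$ under the musical identification $\Gamma(T^*M\otimes\mathfrak{so}(M,g))\cong\Omega^1(M,\Lambda^2T^*M)$. This amounts to $A_X(Y,Z)=\tfrac12H(X,Y,Z)$, and the identification is consistent with the stated decomposition of contorsion tensors. Once this is confirmed, the corollary follows immediately from the theorem, and no further computation is needed.
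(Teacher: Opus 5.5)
Your proposal is correct and is the paper's argument: the corollary is stated as a direct consequence of Theorem \ref{thm:torsion_parallel}, specialized to $A=\tfrac{1}{2}H$, so that $A_X=\tfrac{1}{2}\iota_XH$ and $A_X\triangle_1\alpha=\tfrac{1}{2}(\iota_XH)\triangle_1\alpha$. Your normalization check that $A_X(Y,Z)=\tfrac{1}{2}H(X,Y,Z)$, consistent with the stated decomposition of contorsion tensors, is a detail the paper leaves implicit.
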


\subsection{Supersymmetric NS-NS supergravity systems in neutral four-manifolds}

As explained in the introduction, we will use our results to study the NS-NS Killing spinor equations in four-dimensional pseudo-Riemannian manifolds $(M,g)$ of neutral signature. That is, we study the following coupled spinorial differential system on $(M,g)$: \begin{equation}\label{eq:(2,2)_susy_system}
    \nabla^{S,H}\xi=0,\qquad\varphi\cdot\xi=H\cdot\xi,
\end{equation} where $\xi\in\Gamma(S^\mu)$ is an irreducible real chiral spinor of chirality $\mu\in\Z_2$, $\varphi\in\Omega^1(M)$ is a closed one-form, $H\in\Omega^3(M)$ is a closed three-form, and $\nabla^{S,H}$ is a metric connection on $S$ with totally skew-symmetric torsion $H$. For simplicity of exposition, we take $\mu=1$. As a consequence of Proposition \ref{prop:square_(2,2)}, Proposition \ref{prop:phi.xi=H.xi}, and Corollary \ref{cor:skew_torsion_(p,p)}, we have the following reformulation of system \eqref{eq:(2,2)_susy_system}.

\begin{corollary}
    Let $(M,g)$ be a strongly spin manifold of signature $(2,2)$, $\varphi\in\Omega^1(M)$ a closed one-form, and $H\in\Omega^3(M)$ a closed three-form. Then $(M,g,\varphi,H)$ admits a nowhere vanishing irreducible real chiral spinor $\xi$ satisfying system \eqref{eq:(2,2)_susy_system} if and only if there exists a nowhere vanishing two-form $\alpha\in\Omega^2(M)$ such that: \begin{equation}\label{eq:(2,2)_susy_system_forms}
        \alpha\wedge\alpha=0,\qquad*\alpha=-\alpha,\qquad(\varphi+*H)\wedge\alpha=0,\qquad\nabla^g_X\alpha=\tfrac{1}{2}(\iota_XH)\triangle_1\alpha.
    \end{equation}
\end{corollary}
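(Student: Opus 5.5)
The corollary follows by assembling three earlier results and applying Theorem \ref{thm:characterizaton_constrained_parallel_spinors} to the pair $(\mathcal{A},Q)$ with $\mathcal{A}_X=-\frac{1}{2}\Psi_\gamma(A_X)$, $A=\frac12 H$, and $Q=\Psi_\gamma(\varphi-H)$. The first step is to reduce to pure spinors. In signature $(2,2)$ every chiral spinor is pure, by the corollary following Theorem \ref{thm:square_(p,p)}. Hence a nowhere vanishing chiral $\xi\in\Gamma(S^+)$ falls under Theorem \ref{thm:global_square_(p,p)} with $p=2$ and $\mu=1$. Its signed square is therefore a nowhere vanishing decomposable two-form with $*\alpha=(-1)^{\binom{3}{2}}\alpha=-\alpha$. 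Decomposability of a two-form is equivalent to $\alpha\wedge\alpha=0$. The two algebraic conditions in \eqref{eq:(2,2)_susy_system_forms} then say exactly that $\alpha$ is of this type, pointwise and also locally, since the local factorization $\alpha=\theta_1\wedge\theta_2$ follows from $\alpha\wedge\alpha=0$ with $\alpha\neq0$.

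The next step translates the two spinorial equations. By Corollary \ref{cor:skew_torsion_(p,p)}, $\nabla^{S,H}\xi=0$ is equivalent to $\nabla^g_X\alpha=\frac12(\iota_XH)\triangle_1\alpha$. By the $p=2$ case of Proposition \ref{prop:phi.xi=H.xi} with $\mu=1$, the equation $\varphi\cdot\xi=H\cdot\xi$ is equivalent to $(\varphi+*H)\wedge\alpha=0$. Both results are pointwise statements, or rest on Theorem \ref{thm:characterizaton_constrained_parallel_spinors}, so they hold globally on $M$ without topological assumptions.

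The converse direction is the only step needing care. Start from a nowhere vanishing $\alpha\in\Omega^2(M)$ satisfying \eqref{eq:(2,2)_susy_system_forms}. By Proposition \ref{prop:square_(2,2)}, applied fibrewise, $\alpha$ satisfies the algebraic spinorial-form equations of Corollary \ref{cor:characterization_chiral_spinorial_forms} with chirality $\mu=1$. This includes the quadratic identity with $\beta=\vartheta_1\wedge\vartheta_2$, for which $(\alpha\diamond\beta)^{(0)}=-1\neq0$. Such a $\beta$ can be chosen locally, and the condition is pointwise, so this is enough. Theorem \ref{thm:characterizaton_constrained_parallel_spinors} then yields a nowhere vanishing chiral spinor $\xi$, unique up to sign, with $\alpha=\mathcal{E}^\kappa_\gamma(\xi)$. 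The two differential and constraint conditions on $\alpha$ pull back to \eqref{eq:(2,2)_susy_system} by the equivalences above.

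The main obstacle is this globalization: the sign ambiguity $\pm\xi$ could obstruct a global section. Theorem \ref{thm:characterizaton_constrained_parallel_spinors} already asserts that $\alpha$ determines $\xi$ globally up to sign, so I would simply cite it.
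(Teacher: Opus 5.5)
Your proposal is correct and follows the paper. The paper states this corollary as a direct consequence of Proposition \ref{prop:square_(2,2)}, the $p=2$ case of Proposition \ref{prop:phi.xi=H.xi} and Corollary \ref{cor:skew_torsion_(p,p)}, with the converse resting implicitly on Theorem \ref{thm:characterizaton_constrained_parallel_spinors}, just as in your argument. Your handling of the global sign ambiguity by citing that theorem is also what the paper does. In any case, the statement does not fix the spinor bundle $S$, so any residual $\Z_2$-obstruction to a global $\xi$ could be removed by changing the strong spin structure. Such an obstruction would mean the spinor line $\Im\Psi_\gamma(\alpha)\subset S$ is non-trivial, and changing the strong spin structure amounts to twisting $S$ by a flat real line bundle.
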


As a very explicit and concrete example, we study left-invariant irreducible real chiral spinors satisfying \eqref{eq:(2,2)_susy_system} on four-dimensional real Lie groups of neutral signature.\medskip

Let $G$ be a connected and simply connected four-dimensional real Lie group. Since $G$ is parallelizable, it admits a unique strong spin structure and the corresponding irreducible real spinor bundle is just the trivial vector bundle $S=G\times\Sigma\to G$. Hence, a spinor $\xi\in\Gamma(S)$ is just identified with a map $\xi\colon G\to\Sigma$. We say that the spinor $\xi$ is \emph{left-invariant} if it is a constant map, so we identify left-invariant spinors with elements of the real representation space $\Sigma$. We work in the left-invariant setting, so all the objects can be defined on the Lie algebra $\mathfrak{g}$ of the Lie group $G$. Therefore, in system \eqref{eq:(2,2)_susy_system_forms} we have that $\alpha\in\Lambda^2\mathfrak{g}^*$, and $\varphi\in\mathfrak{g}^*$ and $H\in\Lambda^3\mathfrak{g}^*$ are closed in the Chevalley-Eilenberg complex of $\mathfrak{g}$. As a particular ansatz, we assume that $\mathfrak{g}$ is an almost abelian Lie algebra, that is, it is non-abelian and admits a codimension one abelian ideal $\mathfrak{h}$. We choose a basis $\{e_1,e_2,e_3,e_4\}$ of $\mathfrak{g}$ such that: $$\mathfrak{h}=\mathrm{span}_\R\{e_1,e_2,e_3\}\cong\R^3,\qquad\mathrm{ad}(e_4)\mathfrak{h}\subset\mathfrak{h}.$$

The whole Lie algebra structure of $\mathfrak{g}$ is determined by the derivation: $$D\coloneqq\mathrm{ad}(e_4)|_{\mathfrak{h}}\in\mathrm{Der}(\mathfrak{h})=\End(\R^3),$$ allowing us to identify $\mathfrak{g}$ with the semidirect product $\mathfrak{h}\rtimes_D\R$. More precisely, if $D=(d_{ij})$, then the Lie brackets of $\mathfrak{g}$ are as follows: $$[e_i,e_j]=0,\qquad [e_4,e_i]=De_i=d_{1i}e_1+d_{2i}e_2+d_{3i}e_3$$ for $i,j=1,2,3$. Equivalently, the differentials of the elements of the dual basis $\{e^1,e^2,e^3,e^4\}$ of $\mathfrak{g}^*$ are: $$\d e^i=-(De^i)\wedge e^4=d_{i1}e^{14}+d_{i2}e^{24}+d_{i3}e^{34},\qquad \d e^4=0$$ for $i=1,2,3$, where $D$ acts on a one-form $\theta$ by $D\theta\coloneqq-\theta\circ D$ and we use the notation $e^{ab}\coloneqq e^a\wedge e^b$. Take the following neutral metric: $$g=e^1\otimes e^1+e^2\otimes e^2-e^3\otimes e^3-e^4\otimes e^4.$$

Up to scaling and orthogonal change of basis preserving the almost abelian splitting of $\mathfrak{g}$, any non-zero decomposable anti-self-dual two-form on $(\mathfrak{g},g)$ can be written as: $$\alpha=(e^1+e^4)\wedge(e^2+e^3).$$

Now that the metric $g$ on $\mathfrak{g}$ and the two-form $\alpha\in\Lambda^2\mathfrak{g}^*$ have been fixed, we determine the closed one-form $\varphi\in\mathfrak{g}^*$, the closed three-form $H\in\Lambda^3\mathfrak{g}^*$, and the structure constants of $\mathfrak{g}$, given by the derivation $D$, such that system \eqref{eq:(2,2)_susy_system_forms} is satisfied.

\begin{lemma}
    The equation $\nabla^g_X\alpha=\tfrac{1}{2}(\iota_XH)\triangle_1\alpha$ holds if and only if: $$D=\begin{pmatrix}
        d_{11}&d_{12}&d_{12}\\
        d_{21}&d_{22}&-d_{11}+d_{22}\\
        -d_{21}&d_{32}&d_{11}+d_{32}
    \end{pmatrix}$$ and: $$H=2d_{11}e^{123}-(d_{12}+d_{21})(e^{124}+e^{134})+(d_{11}-d_{22}-d_{32})e^{234}.$$
    
    Moreover, the three-form $H\in\Lambda^3\mathfrak{g}^*$ is closed if and only if $d_{11}\Tr(D)=0$.
\end{lemma}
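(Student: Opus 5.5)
This is a direct left-invariant computation on $\mathfrak{g}=\mathfrak{h}\rtimes_D\R$, carried out in the basis $\{e_1,e_2,e_3,e_4\}$. Set $\theta_1\coloneqq e^1+e^4$ and $\theta_2\coloneqq e^2+e^3$, so that $\alpha=\theta_1\wedge\theta_2$.

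\textbf{Step 1: Levi-Civita connection.} Compute the Levi-Civita connection of the left-invariant metric $g$ from the Koszul formula
$$2g(\nabla^g_XY,Z)=g([X,Y],Z)-g([Y,Z],X)+g([Z,X],Y),$$
using only the brackets $[e_4,e_i]=De_i$. Writing $\varepsilon=\mathrm{diag}(1,1,-1,-1)$, this gives $\nabla^g_{e_a}e^b$ as explicit linear expressions in the entries $d_{ij}$. Two structural features help: $\nabla^g_{e_i}$ for $i\le3$ involves only the symmetric part of $D$ with respect to $\varepsilon|_{\mathfrak{h}}$, and $\nabla^g_{e_4}$ involves only its skew part.

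\textbf{Step 2: Reduce the parallelism condition to a linear system.} Rather than expanding both sides of $\nabla^g_X\alpha=\tfrac12(\iota_XH)\triangle_1\alpha$ in full, use Theorem \ref{thm:torsion_parallel}. For decomposable $\alpha$ with isotropic span $\mathcal{U}=\mathrm{span}\{\theta_1,\theta_2\}$, the equation is equivalent to two conditions:
\begin{itemize}
\item $\nabla^{g,H}_X\theta_j\in\mathcal{U}$ for $j=1,2$, which is the content of the proof of Theorem \ref{thm:torsion_parallel} (the computation there is reversible);
\item the trace condition, namely that the induced connection on $\Lambda^2\mathcal{U}$ matches, i.e.\ the coefficient of $\alpha$ on both sides agrees.
\end{itemize}
Expand $H=\sum h_{abc}e^{abc}$ with four unknowns $h_{123},h_{124},h_{134},h_{234}$. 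For each $X=e_a$ and $j=1,2$, pair $\nabla^g_{e_a}\theta_j-\tfrac12(\iota_{e_a}H)\triangle_1\theta_j$ against $\theta_1^\sharp,\theta_2^\sharp$. Since $\mathcal{U}$ is maximal isotropic, this pairing vanishes exactly when the vector lies in $\mathcal{U}$. The result is a linear system in the twelve unknowns $(d_{ij},h_{abc})$, and it is cleanest to write out all its equations directly.

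\textbf{Step 3: Solve the system.} Solve it by eliminating the $h_{abc}$ first, then reading off the linear constraints on $D$. The expected output is $d_{13}=d_{12}$, $d_{23}=-d_{11}+d_{22}$, $d_{31}=-d_{21}$, $d_{33}=d_{11}+d_{32}$, with $H$ equal to the stated combination. The four free parameters $d_{11},d_{12},d_{21},d_{22},d_{32}$ should remain after checking the trace condition. This bookkeeping is the main obstacle: the system is overdetermined, and sign conventions in $\triangle_1$ and $\varepsilon$ must be tracked carefully. As a consistency check, the difference of the two sides is an anti-self-dual two-form, because $\nabla^g$ and the $\mathfrak{so}$-action both preserve anti-self-duality. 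It therefore suffices to compare three anti-self-dual components for each $X$, giving $12$ equations in total. Conversely, substituting the stated $D$ and $H$ back verifies sufficiency.

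\textbf{Step 4: Closedness of $H$.} Compute $\mathrm{d}H$ using $\mathrm{d}e^4=0$ and $\mathrm{d}e^i=-(De^i)\wedge e^4$. Every term containing $e^4$ already wedges to zero against $e^4$, so
$$\mathrm{d}H=2d_{11}\,\mathrm{d}(e^{123})=\pm2d_{11}\Tr(D)\,e^{1234},$$
since $\mathrm{d}(e^{123})=\pm\Tr(D)\,e^{1234}$ on an almost abelian algebra. Hence $H$ is closed if and only if $d_{11}\Tr(D)=0$.
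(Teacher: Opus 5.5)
Your proposal is correct and is essentially the paper's proof. The paper also computes $\nabla^g$ (Koszul), solves the resulting linear system in $(d_{ij},h_{abc})$, and obtains $\mathrm{d}H=2d_{11}\,\mathrm{d}(e^{123})=2d_{11}\Tr(D)\,e^{1234}$ (the sign is $+$); your $\mathcal{U}$-preservation and trace conditions are exactly its matching of the $(e^{14}+e^{23})$- and $\alpha$-coefficients of the two sides. Only the bookkeeping needs fixing: both sides are orthogonal to the null form $\alpha$, so there are $8$ nontrivial equations in $13$ unknowns (not twelve), leaving five free parameters $d_{11},d_{12},d_{21},d_{22},d_{32}$ (not four).
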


\begin{proof}
    The Levi-Civita connection endomorphisms $\nabla^g_{e_i}\colon\mathfrak{g}\to\mathfrak{g}$, with respect to the basis $\{e_1,e_2,e_3,e_4\}$, are given as follows: \begin{align*}
        \nabla^g_{e_1}&=\left(\begin{smallmatrix}
            0&0&0&-d_{11}\\
            0&0&0&-\frac{1}{2}(d_{12}+d_{21})\\
            0&0&0&\frac{1}{2}(d_{13}-d_{31})\\
            -d_{11}&-\frac{1}{2}(d_{12}+d_{21})&-\frac{1}{2}(d_{13}-d_{31})&0
        \end{smallmatrix}\right),\\
        \nabla^g_{e_2}&=\left(\begin{smallmatrix}
            0&0&0&-\frac{1}{2}(d_{12}+d_{21})\\
            0&0&0&-d_{22}\\
            0&0&0&\frac{1}{2}(d_{23}-d_{32})\\
            -\frac{1}{2}(d_{12}+d_{21})&-d_{22}&-\frac{1}{2}(d_{23}-d_{32})&0
        \end{smallmatrix}\right),\\
        \nabla^g_{e_3}&=\left(\begin{smallmatrix}
            0&0&0&-\frac{1}{2}(d_{13}-d_{31})\\
            0&0&0&-\frac{1}{2}(d_{23}-d_{32})\\
            0&0&0&-d_{33}\\
            -\frac{1}{2}(d_{13}-d_{31})&-\frac{1}{2}(d_{23}-d_{32})&d_{33}&0
        \end{smallmatrix}\right),\\
        \nabla^g_{e_4}&=\left(\begin{smallmatrix}
            0&\frac{1}{2}(d_{12}-d_{21})&\frac{1}{2}(d_{13}+d_{31})&0\\
            -\frac{1}{2}(d_{12}-d_{21})&0&\frac{1}{2}(d_{23}+d_{32})&0\\
            \frac{1}{2}(d_{13}+d_{31})&\frac{1}{2}(d_{23}+d_{32})&0&0\\
            0&0&0&0
        \end{smallmatrix}\right).
    \end{align*}
    
    Therefore we have: \begin{align*}
        \nabla^g_{e_1}\alpha&=d_{11}\alpha+\tfrac{1}{2}(d_{12}-d_{13}+d_{21}+d_{31})(e^{14}+e^{23}),\\
        \nabla^g_{e_2}\alpha&=\tfrac{1}{2}(d_{12}+d_{21})\alpha+(d_{22}-\tfrac{1}{2}d_{23}+\tfrac{1}{2}d_{32})(e^{14}+e^{23}),\\
        \nabla^g_{e_3}\alpha&=\tfrac{1}{2}(d_{13}-d_{31})\alpha+(\tfrac{1}{2}d_{23}-\tfrac{1}{2}d_{32}+d_{33})(e^{14}+e^{23}),\\
        \nabla^g_{e_4}\alpha&=-\tfrac{1}{2}(d_{23}+d_{32})\alpha-\tfrac{1}{2}(d_{12}-d_{13}-d_{21}-d_{31})(e^{14}+e^{23}).
    \end{align*}

    To compute the algebraic side of the equation, recall that in terms of the orthonormal basis $\{e_1,e_2,e_3,e_4\}$ of $(\mathfrak{g},g)$, the generalized product $\triangle_1$ is as follows: \begin{align*}
        \omega\triangle_1\alpha&=g^{ij}(\iota_{e_i}\omega)\wedge(\iota_{e_j}\alpha)\\
        &=(\iota_{e_1}\omega)\wedge(\iota_{e_1}\alpha)+(\iota_{e_2}\omega)\wedge(\iota_{e_2}\alpha)-(\iota_{e_3}\omega)\wedge(\iota_{e_3}\alpha)-(\iota_{e_4}\omega)\wedge(\iota_{e_4}\alpha).
    \end{align*}
    
    Let $H=h_{123}e^{123}+h_{124}e^{124}+h_{134}e^{134}+h_{234}e^{234}\in\Lambda^3\mathfrak{g}^*$, where $h_{123},h_{124},h_{134},h_{234}\in\R$. Then we have: \begin{align*}
        (\iota_{e_1}H)\triangle_1\alpha&=h_{123}\alpha+(h_{124}-h_{134})(e^{14}+e^{23}),\\
        (\iota_{e_2}H)\triangle_1\alpha&=-h_{124}\alpha+(h_{123}-h_{234})(e^{14}+e^{23}),\\
        (\iota_{e_3}H)\triangle_1\alpha&=-h_{134}\alpha+(h_{123}-h_{234})(e^{14}+e^{23}),\\
        (\iota_{e_4}H)\triangle_1\alpha&=h_{234}\alpha+(h_{124}-h_{134})(e^{14}+e^{23}).
    \end{align*}

    Solving the equation $\nabla^g_{e_i}\alpha=\tfrac{1}{2}(\iota_{e_i}H)\triangle_1\alpha$ for $i=1,2,3,4$ yields the conditions from the statement. Finally, we compute $\d H=2d_{11}(2d_{11}+d_{22}+d_{32})e^{1234}$, which is zero if and only if $d_{11}\Tr(D)=0$.
\end{proof}

It remains to find a closed one-form $\varphi\in\mathfrak{g}^*$ such that $(\varphi+*H)\wedge\alpha=0$. Let $\varphi=\varphi_1e^1+\varphi_2e^2+\varphi_3e^3+\varphi_4e^4$ for $\varphi_1,\varphi_2,\varphi_3,\varphi_4\in\R$. Then we have: $$(\varphi+*H)\wedge\alpha=-(\varphi_2-\varphi_3)(e^{123}+e^{234})-(d_{11}+d_{22}+d_{32}+\varphi_1-\varphi_4)(e^{124}+e^{134}),$$ which is zero if and only if $\varphi_3=\varphi_2$ and $\varphi_4=\varphi_1+d_{11}+d_{22}+d_{32}$. We compute: $$\d\varphi=d_{11}\varphi_1e^{14}+(d_{12}\varphi_1+d_{22}\varphi_2+d_{32}\varphi_2)(e^{24}+e^{34}).$$

We then have the following result.

\begin{theorem}\label{thm:susy_(2,2)}
    Let $\mathfrak{g}=\mathfrak{h}\rtimes_D\R$ be a four-dimensional almost abelian Lie algebra equipped with the neutral metric $g=e^1\otimes e^1+e^2\otimes e^2-e^3\otimes e^3-e^4\otimes e^4$ and let $\varphi\in\mathfrak{g}^*$ and $H\in\Lambda^3\mathfrak{g}^*$. Then $(\mathfrak{g},g,\varphi,H)$ admits a left-invariant irreducible real chiral spinor $\xi$ satisfying system \eqref{eq:(2,2)_susy_system} if and only if: $$D=\begin{pmatrix}
        d_{11}&d_{12}&d_{12}\\
        d_{21}&d_{22}&-d_{11}+d_{22}\\
        -d_{21}&d_{32}&d_{11}+d_{32}
    \end{pmatrix}$$ and: \begin{align*}
        \varphi&=\varphi_1e^1+\varphi_2(e^2+e^3)+(\varphi_1+d_{11}+d_{22}+d_{32})e^4,\\
        H&=2d_{11}e^{123}-(d_{12}+d_{21})(e^{124}+e^{134})+(d_{11}-d_{22}-d_{32})e^{234}
    \end{align*} satisfying: $$d_{11}\Tr(D)=0,\qquad d_{11}\varphi_1=0,\qquad d_{12}\varphi_1+(d_{22}+d_{32})\varphi_2=0.$$
\end{theorem}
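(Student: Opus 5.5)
The plan is to assemble the statement from the reformulation given by the Corollary preceding this subsection, together with the Lemma just proved and the computation of $(\varphi+*H)\wedge\alpha$ and $\d\varphi$. Since $G$ is parallelizable and simply connected with $S=G\times\Sigma$, left-invariant spinors correspond to elements of $\Sigma$. Their signed squares are the left-invariant forms in $\Lambda\mathfrak{g}^*$. So, by the Corollary (which rests on Proposition \ref{prop:square_(2,2)}, Proposition \ref{prop:phi.xi=H.xi} with $p=2$, $\mu=1$, and Corollary \ref{cor:skew_torsion_(p,p)}), the existence of $\xi$ is equivalent to the existence of a non-zero decomposable anti-self-dual $\alpha\in\Lambda^2\mathfrak{g}^*$ satisfying the last two equations of \eqref{eq:(2,2)_susy_system_forms}. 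Here $\varphi$ and $H$ are closed in the Chevalley--Eilenberg complex, which is part of the hypotheses of system \eqref{eq:(2,2)_susy_system}.

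The first step is to justify the normalization $\alpha=(e^1+e^4)\wedge(e^2+e^3)$. Any decomposable anti-self-dual $\alpha$ spans a maximally isotropic $2$-plane of the appropriate family. The argument would go as follows:
\begin{itemize}
\item scaling $\alpha$ (equivalently $\xi$) does not affect the linear equations in $\alpha$;
\item an orthogonal change of basis preserving $\mathfrak{h}$ and the line of $e_4$ only changes the matrix $D$ by conjugation and sign, and this is absorbed into the parametrization;
\item the stabilizer $\mathrm{O}(2,1)$ of $e_4$ acts transitively on the relevant isotropic planes meeting the splitting in the generic way.
\end{itemize}
With this in place one may fix $\alpha$ as stated, as the paper does before the Lemma.

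The second step applies the Lemma: $\nabla^g_X\alpha=\tfrac12(\iota_XH)\triangle_1\alpha$ holds exactly when $D$ and $H$ have the displayed form, and then $\d H=0$ is equivalent to $d_{11}\Tr(D)=0$. The third step imposes the algebraic constraint. From the computed expression for $(\varphi+*H)\wedge\alpha$ we get $\varphi_3=\varphi_2$ and $\varphi_4=\varphi_1+d_{11}+d_{22}+d_{32}$, which gives the stated $\varphi$. Substituting into $\d\varphi$, closedness becomes $d_{11}\varphi_1=0$ and $d_{12}\varphi_1+(d_{22}+d_{32})\varphi_2=0$. This step should be rechecked using $\d e^4=0$ and $\d e^i=d_{i1}e^{14}+d_{i2}e^{24}+d_{i3}e^{34}$ with the constrained entries of $D$; for instance $d_{13}=d_{12}$ and $d_{23}+d_{33}=d_{32}+d_{22}$ would collect into the $e^{24}+e^{34}$ coefficient. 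The converse direction is obtained by reading the same equivalences backwards and invoking the Corollary to produce $\xi$.

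The main obstacle is the normalization step, namely confirming that every decomposable anti-self-dual two-form can be brought to $(e^1+e^4)\wedge(e^2+e^3)$ using only transformations compatible with the almost abelian splitting. The isotropic planes could meet $\mathfrak{h}^*$ or $e^4$ in non-generic ways. I would first handle this by an explicit orbit analysis of $\mathrm{O}(2,1)\times\Z_2$ on the anti-self-dual isotropic Grassmannian (a circle) relative to $e^4$. If that is not transitive, I would treat the statement as relative to the fixed ansatz $\alpha$, which is how the surrounding text frames it.
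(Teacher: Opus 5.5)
Your proposal is correct and follows the paper's own route: the form-reformulation Corollary, then the Lemma for $D$ and $H$ (with $\d H=0\iff d_{11}\Tr(D)=0$), then solving $(\varphi+*H)\wedge\alpha=0$ and $\d\varphi=0$; your recheck of the $e^{24}+e^{34}$ coefficient using $d_{13}=d_{12}$ and $d_{23}+d_{33}=d_{22}+d_{32}$ does confirm the stated conditions. The normalization worry resolves positively, so there is no non-generic case. The map $\alpha\mapsto\iota_{e_4}\alpha$ is an $\mathrm{SO}(2,1)$-equivariant isomorphism from anti-self-dual two-forms onto $\mathrm{span}\{e^1,e^2,e^3\}$ sending null forms to null covectors (e.g.\ $(e^1+e^4)\wedge(e^2+e^3)\mapsto e^2+e^3$), and $\mathrm{SO}_0(2,1)$ is transitive on null lines. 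One caveat: the conjugated $D$ is not literally absorbed into the parametrization, so the theorem's conditions must be read in this adapted basis, which is exactly how the paper implicitly frames it.
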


We conclude by studying which tuples $(\mathfrak{g},g,\varphi,H)$ as in Theorem \ref{thm:susy_(2,2)}, with non-zero $\varphi$ and $H$, satisfy the NS-NS supergravity system: \begin{equation}\label{eq:(2,2)_EOM}
    \mathrm{Ric}^g+\nabla^g\varphi-\tfrac{1}{2}H\circ H=0,\qquad\delta H+\iota_{\varphi^{\sharp}}H=0,\qquad\delta\varphi+\escal{\varphi,\varphi}-\escal{H,H}=0,
\end{equation} where $(H\circ H)(X,Y)\coloneqq\escal{\iota_XH,\iota_YH}$ for all $X,Y\in\mathfrak{g}$ and $\delta$ is the codifferential operator, which in signature $(2,2)$ is given by $\delta=-*\d*$.\medskip

Note that an almost abelian Lie algebra $\mathfrak{g}=\mathfrak{h}\rtimes_D\R$ is unimodular if and only if $\Tr(D)=0$. Hence, in the following, we consider the unimodular and non-unimodular cases separately.

\begin{theorem}\label{thm:EOM_(2,2)_unimodular}
    Let $(\mathfrak{g},g,\varphi,H)$ be as in Theorem \ref{thm:susy_(2,2)} with $\mathfrak{g}$ unimodular and $\varphi$ and $H$ non-zero. Then the tuple $(\mathfrak{g},g,\varphi,H)$ solves the system \eqref{eq:(2,2)_EOM} if and only if: $$D=\begin{pmatrix}
        0&0&0\\
        d_{21}&d_{22}&d_{22}\\
        -d_{21}&-d_{22}&-d_{22}
    \end{pmatrix}$$ and: $$\varphi=\varphi_2(e^2+e^3),\qquad H=-d_{21}(e^{124}+e^{134})=\d(e^{23})$$ with $\varphi_2$ and $d_{21}$ non-zero. In particular, $\mathfrak{g}\cong\R\oplus\mathfrak{heis}_3$, where $\mathfrak{heis}_3$ is the three-dimensional Heisenberg Lie algebra.
\end{theorem}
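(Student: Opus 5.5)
The plan is a direct computation on the Lie algebra. We start from the family in Theorem~\ref{thm:susy_(2,2)}, impose unimodularity, evaluate each term of \eqref{eq:(2,2)_EOM} in the orthonormal basis $\{e_1,e_2,e_3,e_4\}$, and solve the resulting polynomial system in $d_{ij},\varphi_1,\varphi_2$.

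\textbf{Reducing the parameters.} Unimodularity reads $\Tr(D)=2d_{11}+d_{22}+d_{32}=0$, so we substitute $d_{32}=-2d_{11}-d_{22}$. Then $d_{11}\Tr(D)=0$ holds automatically. The remaining supersymmetry constraints from Theorem~\ref{thm:susy_(2,2)} become
$$d_{11}\varphi_1=0,\qquad d_{12}\varphi_1-2d_{11}\varphi_2=0,$$
and $\varphi_4=\varphi_1-d_{11}$. The free data are now $d_{11},d_{12},d_{21},d_{22},\varphi_1,\varphi_2$.

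\textbf{Computing the terms of \eqref{eq:(2,2)_EOM}.}
\begin{itemize}
\item $\mathrm{Ric}^g$: use the Levi-Civita endomorphisms $\nabla^g_{e_i}$ listed in the proof of the preceding lemma, via the standard left-invariant formula $R(X,Y)=[\nabla^g_X,\nabla^g_Y]-\nabla^g_{[X,Y]}$, traced with the signs $\varepsilon_i$.
\item $\nabla^g\varphi$: obtained from the same matrices; it is symmetric because $\d\varphi=0$.
\item $H\circ H$ and $\escal{H,H}$: direct from the explicit $H$, with $\escal{e^{abc},e^{abc}}=\varepsilon_a\varepsilon_b\varepsilon_c$.
\item $\delta H=-*\d*H$ and $\delta\varphi=-*\d*\varphi$: computed with the Chevalley--Eilenberg differential $\d e^i=-(De^i)\wedge e^4$. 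Unimodularity simplifies these, since $\d$ of any left-invariant $3$-form vanishes.
\end{itemize}

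\textbf{Solving the system.} We split into cases according to $d_{11}$.
\begin{itemize}
\item If $d_{11}\neq0$, then $\varphi_1=0$ and hence $\varphi_2=0$. Also $\varphi_4=-d_{11}$, so $\varphi=-d_{11}e^4\neq0$. We expect a diagonal component of the Einstein equation, or the scalar equation, to force $d_{11}=0$, which is a contradiction.
\item If $d_{11}=0$, the constraint is $d_{12}\varphi_1=0$. We then use the $(1,1)$, $(1,4)$ and $(4,4)$ entries of the Einstein equation, together with the $H$-equation, to kill $d_{12}$ and $\varphi_1$.
\end{itemize}

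Once $d_{11}=d_{12}=\varphi_1=0$, we get $d_{32}=-d_{22}$, so $D$ has the stated form. Then $H=-d_{21}(e^{124}+e^{134})$ and $\varphi=\varphi_2(e^2+e^3)$. Since $\d e^{23}=-d_{21}(e^{124}+e^{134})$, we have $H=\d(e^{23})$.

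Both forms are null: $\escal{H,H}=0$ and $\escal{\varphi,\varphi}=0$. This leaves verifying the remaining equations: Ricci should be proportional to $(e^2+e^3)^{\otimes2}$ and cancel against $\nabla^g\varphi-\tfrac12 H\circ H$. Non-vanishing of $\varphi$ and $H$ gives $\varphi_2\neq0$ and $d_{21}\neq0$. We expect, though this still has to be checked, that $\varphi_2$ stays free.

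\textbf{Identifying the algebra.} The columns of $D$ are all multiples of $(0,1,-1)^t$, and $D(0,1,-1)^t=0$. Hence $D$ is nilpotent of rank one, being non-zero because $d_{21}\neq0$. An almost abelian algebra with rank-one square-zero $D$ is $\R\oplus\mathfrak{heis}_3$.

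\textbf{Main obstacle.} The hardest step is the case analysis that eliminates $d_{11}$, $d_{12}$ and $\varphi_1$. The Ricci tensor of a general almost abelian metric is messy. I would first use the trace (scalar) equation and the $e^4$-components, where $\varphi_4$ enters, to get low-degree relations, and only afterwards use the off-diagonal components.
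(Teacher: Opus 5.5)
Your overall strategy is the same as the paper's: reduce by unimodularity, compute each term of \eqref{eq:(2,2)_EOM} on the Lie algebra, and solve. Your sufficiency check and the identification $\mathfrak{g}\cong\R\oplus\mathfrak{heis}_3$ are fine. The gap is in the necessity step that eliminates $d_{12}$, because the equations you assign to it are identically satisfied.

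\textbf{The Einstein components you name carry no information.} Once $d_{11}=0$ (so $d_{32}=-d_{22}$), a direct computation gives $\mathrm{Ric}^g=\tfrac12(d_{12}^2-d_{21}^2)\,\ell\otimes\ell$ and $H\circ H=-(d_{12}+d_{21})^2\,\ell\otimes\ell$, with $\ell=e^2+e^3$. The $(1,1)$, $(1,4)$ and $(4,4)$ entries of $\nabla^g\varphi$ also vanish identically. Indeed $\nabla^g_{e_1}e_1=\nabla^g_{e_4}e_4=0$, while $\nabla^g_{e_1}e_4$ and $\nabla^g_{e_4}e_1$ are multiples of $e_2-e_3$, on which $\varphi$ vanishes. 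So these entries of the Einstein equation read $0=0$.

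\textbf{The $H$-equation only removes $\varphi_1$.} One has $\delta H=0$ and $\iota_{\varphi^\sharp}H=d_{21}\varphi_1\alpha$, hence $d_{21}\varphi_1=0$. Together with $d_{12}\varphi_1=0$ and $H\neq0$, this forces $\varphi_1=0$. It says nothing about $d_{12}$.

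\textbf{What actually fixes $d_{12}$.} The only equation constraining $d_{12}$ is the $\ell\otimes\ell$ component of the Einstein equation, which you treat as a final consistency check. With $\varphi_1=0$ one has $\nabla^g\varphi=0$, and that component reads $d_{12}(d_{12}+d_{21})=0$. Only then does $H=-(d_{12}+d_{21})(e^{124}+e^{134})\neq0$ force $d_{12}=0$. For general $d_{12}$ this component does not cancel, so it cannot be left as an automatic verification; it is the key relation.

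\textbf{A smaller point.} The case split on $d_{11}$ is unnecessary, and you leave the case $d_{11}\neq0$ as an expectation. You already observed that $\delta\varphi=0$ by unimodularity. Using $d_{11}\varphi_1=0$, one gets $\escal{\varphi,\varphi}=\varphi_1^2-(\varphi_1-d_{11})^2=-d_{11}^2$, and also $\escal{H,H}=5d_{11}^2$. The scalar equation therefore reads $-6d_{11}^2=0$, which gives $d_{11}=0$ in one line, as in the paper. After that, $\varphi_2$ indeed remains free, and all three equations hold for the stated $D$, $\varphi$, $H$.
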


\begin{proof}
    Since $\mathfrak{g}$ is unimodular we have $2d_{11}+d_{22}+d_{32}=0$ and then the three-form $H$ is closed. In this case we get: \begin{align*}
        \varphi&=\varphi_1e^1+\varphi_2(e^2+e^3)+(\varphi_1-d_{11})e^4,\\
        H&=2d_{11}e^{123}-(d_{12}+d_{21})(e^{124}+e^{134})+3d_{11}e^{234}.
    \end{align*}

    We compute: $$\escal{\varphi,\varphi}=-d_{11}^2,\qquad \escal{H,H}=5d_{11}^2,\qquad \delta\varphi=0.$$
    
    Hence, the third equation of \eqref{eq:(2,2)_EOM} implies that $d_{11}=0$ and then $\varphi$ is closed if and only if $d_{12}\varphi_1=0$. Now we compute: $$\iota_{\varphi^\sharp}H=d_{21}\varphi_1\alpha,\qquad \delta H=0.$$
    
    Hence, the second equation of \eqref{eq:(2,2)_EOM} implies that $d_{21}\varphi_1=0$. If $\varphi_1\neq0$, then $d_{12}=d_{21}=0$, which implies that $H=0$. Since this case is excluded we have $\varphi_1=0$ and $\varphi$ is closed. Finally, we compute: \begin{align*}
        \mathrm{Ric}^g&=\tfrac{1}{2}(d_{12}^2-d_{21}^2)(e^2+e^3)\otimes(e^2+e^3),\\
        H\circ H&=-(d_{12}+d_{21})^2(e^2+e^3)\otimes(e^2+e^3),\\
        \nabla^g\varphi&=0.
    \end{align*}

    Hence, the first equation of \eqref{eq:(2,2)_EOM} implies that $d_{12}(d_{12}+d_{21})=0$. If $d_{12}+d_{21}=0$, then $H=0$. Since this case is excluded we have $d_{12}=0$ and $d_{21}\neq0$. Therefore we obtain the expressions from the statement.\medskip
    
    Since $D^2=0$, the Lie algebra $\mathfrak{g}$ is two-step nilpotent, hence isomorphic to the unique two-step nilpotent four-dimensional real Lie algebra, namely the direct product of the three-dimensional Heisenberg Lie algebra with a one-dimensional abelian factor, i.e.\ $\R\oplus\mathfrak{heis}_3$.
\end{proof}

\begin{theorem}\label{thm:EOM_(2,2)_non_unimodular}
    Let $(\mathfrak{g},g,\varphi,H)$ be as in Theorem \ref{thm:susy_(2,2)} with $\mathfrak{g}$ non-unimodular and $\varphi$ and $H$ non-zero. Then the tuple $(\mathfrak{g},g,\varphi,H)$ solves the system \eqref{eq:(2,2)_EOM} if and only if: $$D=\begin{pmatrix}
        0&d_{12}&d_{12}\\
        0&\frac{1}{2t}(t^2-d_{12}^2)&\frac{1}{2t}(t^2-d_{12}^2)\\
        0&\frac{1}{2t}(t^2+d_{12}^2)&\frac{1}{2t}(t^2+d_{12}^2)
    \end{pmatrix}$$ and: $$\varphi=-te^1+d_{12}(e^2+e^3),\qquad H=-d_{12}(e^{124}+e^{134})-te^{234}=\d(e^{23}+\tfrac{d_{12}}{t}(e^{12}+e^{13}))$$ with $t$ non-zero. In particular, $\mathfrak{g}\cong\R^2\oplus\mathfrak{aff}(\R)$, where $\mathfrak{aff}(\R)$ is the Lie algebra of the group of affine motions of the real line.
\end{theorem}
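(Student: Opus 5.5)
The plan mirrors the proof of Theorem \ref{thm:EOM_(2,2)_unimodular}, starting from the parametrization of Theorem \ref{thm:susy_(2,2)}. Non-unimodularity means $\Tr(D)=2d_{11}+d_{22}+d_{32}\neq0$, so closedness of $H$ ($d_{11}\Tr(D)=0$) forces $d_{11}=0$. Then $\Tr(D)=d_{22}+d_{32}\eqqcolon t\neq0$. The data become
\[
\varphi=\varphi_1e^1+\varphi_2(e^2+e^3)+(\varphi_1+t)e^4,\qquad H=-(d_{12}+d_{21})(e^{124}+e^{134})+(d_{22}-d_{32})e^{234},
\]
and closedness of $\varphi$ reduces to $d_{12}\varphi_1+t\varphi_2=0$. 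This lets us solve $\varphi_2=-d_{12}\varphi_1/t$.

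Next I would compute the scalar quantities entering the third equation of \eqref{eq:(2,2)_EOM}, namely $\escal{\varphi,\varphi}$, $\escal{H,H}$ and $\delta\varphi=-*\d*\varphi$. For the last one, $\d*\varphi$ only sees the $e^4$-component through $\Tr(D)$. Then I would compute $\delta H$ and $\iota_{\varphi^\sharp}H$, using the Levi-Civita matrices from the preceding lemma, now specialized to $d_{11}=0$, $d_{13}=d_{12}$, $d_{23}=d_{22}$, $d_{31}=-d_{21}$, $d_{33}=d_{32}$.

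The expected outcome of the two scalar and one-form equations is the following:
\begin{itemize}
\item the $e^{234}$-part and the $\alpha$-part of the second equation should force $\varphi_1=-t$, so that the $e^4$-coefficient of $\varphi$ vanishes;
\item it should also force $d_{21}=0$, and then closedness gives $\varphi_2=d_{12}$;
\item the third equation should then give a relation between $d_{22}-d_{32}$ and $t$, presumably $d_{22}-d_{32}=-d_{12}^2/t$.
\end{itemize}
Combined with $d_{22}+d_{32}=t$, this yields the stated entries $\frac{1}{2t}(t^2\mp d_{12}^2)$ and $H=-d_{12}(e^{124}+e^{134})-te^{234}$.

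The main obstacle is the Einstein equation $\mathrm{Ric}^g+\nabla^g\varphi-\tfrac12 H\circ H=0$. It requires computing the Ricci tensor of this almost abelian metric, either from the connection matrices or from the standard formula for almost abelian metrics in terms of $D$, its symmetric part and $\Tr(D)$. One also needs $\nabla^g\varphi$, which is now nonzero because $\varphi$ has an $e^1$ component. I would organize this by writing all tensors in the null coframe adapted to $e^2+e^3$, $e^2-e^3$, $e^1$, $e^4$. I expect most components to cancel identically once the previous constraints are imposed, and the remaining component to be consistent with (or to reproduce) the relation from the dilaton equation. Should the equations instead leave an extra free parameter, I would revisit which equation fixes $\varphi_1=-t$.

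Finally, the converse is a direct substitution. The exactness $H=\d(e^{23}+\frac{d_{12}}{t}(e^{12}+e^{13}))$ follows from $\d e^1=0$ and $\d(e^2+e^3)=t(e^{24}+e^{34})$, using that columns 2 and 3 of $D$ coincide. For the isomorphism type, $D$ has rank one with image spanned by $De_2$ and $D^2=tD$, so $D/t$ is a rank-one projection. Choosing $e_4/t$ and a basis of $\mathfrak{h}$ adapted to $\ker D\oplus\operatorname{Im}D$ gives $\mathfrak{g}\cong\R^2\oplus\mathfrak{aff}(\R)$.
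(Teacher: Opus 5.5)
\textbf{There is a genuine gap, and it sits where the theorem's content is.}

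\textbf{The specialization of $H$ is wrong.} In Theorem~\ref{thm:susy_(2,2)} the $e^{234}$-coefficient is $d_{11}-d_{22}-d_{32}$. At $d_{11}=0$ this is $-t$, not $d_{22}-d_{32}$. So $H=-(d_{12}+d_{21})(e^{124}+e^{134})-te^{234}$ before any field equation is imposed, and $\escal{H,H}=t^2$. Your version is also internally inconsistent: with $d_{22}-d_{32}=-d_{12}^2/t$ it would end with $-\tfrac{d_{12}^2}{t}e^{234}$, not the $-te^{234}$ you claim.

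\textbf{The dilaton equation does not give your relation.} With the correct data, $\escal{\varphi,\varphi}=-t^2-2t\varphi_1$ and $\delta\varphi=-t^2-t\varphi_1$. So the dilaton equation reduces to $-3t(t+\varphi_1)=0$: it only gives $\varphi_1=-t$, and it contains no relation between $d_{22}-d_{32}$ and $t$. Your alternative of extracting $\varphi_1=-t$ from the $H$-equation does work. The $e^{23}$-component of $\delta H+\iota_{\varphi^\sharp}H$ is $t(t+\varphi_1)$, and the $e^{24}+e^{34}$ and $e^{12}+e^{13}$ components then give $t d_{21}=0$, hence $d_{21}=0$. But then the dilaton equation is redundant, and after all first-order and scalar equations $d_{22}$ is still free, with $d_{32}=t-d_{22}$.

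\textbf{The missing constraint comes from the Einstein equation.} The relation you guessed, $d_{22}-d_{32}=-d_{12}^2/t$ (equivalently $d_{12}^2+2d_{22}t-t^2=0$), comes only from the Einstein equation. You treat that equation as a consistency check and do not compute it, so the \emph{only if} direction is not established by your plan as written. Also, contrary to your expectation, $\nabla^g\varphi=0$. Once $d_{11}=d_{21}=0$ and $\varphi_1=-t$, the vector $\varphi^\sharp=-te_1+d_{12}(e_2-e_3)$ is parallel, using only $d_{22}+d_{32}=t$. The $e^1\odot(e^2+e^3)$ and $g-e^1\otimes e^1$ parts of $\mathrm{Ric}^g$ and $\tfrac12 H\circ H$ cancel. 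This leaves $\mathrm{Ric}^g-\tfrac12 H\circ H=(d_{12}^2+2d_{22}t-t^2)\,(e^2+e^3)\otimes(e^2+e^3)$, which is exactly the constraint fixing $d_{22}$ and $d_{32}$. For the computation, with $g(e_4,e_4)=-1$ the almost abelian formulas are
$\mathrm{Ric}^g|_{\mathfrak h}=\Tr(D)\,g(S\cdot,\cdot)+g([S,A]\cdot,\cdot)$, $\mathrm{Ric}^g(e_4,e_4)=-\Tr(S^2)$ and $\mathrm{Ric}^g(e_4,\mathfrak h)=0$. Here $S$ and $A$ are the $g$-symmetric and $g$-skew parts of $D$.

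\textbf{A minor slip.} $\d e^1=d_{12}(e^{24}+e^{34})$ is not zero; you conflated $De_1=0$ with $\d e^1=0$. The exactness of $H$ still holds, because $\d e^1\wedge(e^2+e^3)=0$. Your identification $\mathfrak g\cong\R^2\oplus\mathfrak{aff}(\R)$ via $D^2=tD$ is fine and amounts to the paper's change of basis.
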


\begin{proof}
    Since $\mathfrak{g}$ is non-unimodular, the three-form $H$ is closed if and only if $d_{11}=0$. Then: $$t\coloneqq\Tr(D)=d_{22}+d_{32}\neq0$$ and the one-form $\varphi$ is closed if and only if $\varphi_2=-\tfrac{1}{t}d_{12}\varphi_1$. In this case we get: \begin{align*}
        \varphi&=\varphi_1e^1-\tfrac{1}{t}d_{12}\varphi_1(e^2+e^3)+(\varphi_1+t)e^4,\\
        H&=-(d_{12}+d_{21})(e^{124}+e^{134})-te^{234}.
    \end{align*}

    We compute: $$\escal{\varphi,\varphi}=-t^2-2t\varphi_1,\qquad\escal{H,H}=t^2,\qquad\delta\varphi=-t^2-t\varphi_1.$$

    Hence, the third equation of \eqref{eq:(2,2)_EOM} implies that $t(t+\varphi_1)=0$. Since $t\neq0$, then $\varphi_1=-t$. Now we compute: $$\iota_{\varphi^\sharp}H=d_{21}t(e^{24}+e^{34}),\qquad \delta H=d_{21}t(e^{12}+e^{13}).$$

    Hence, the second equation of \eqref{eq:(2,2)_EOM} implies that $d_{21}t=0$. Since $t\neq0$, then $d_{21}=0$. Finally, we compute: \begin{align*}
        \mathrm{Ric}^g&=\tfrac{1}{2}d_{12}t(e^1\odot(e^2+e^3))+(\tfrac{1}{2}d_{12}^2+2d_{22}t-t^2)(e^2+e^3)\otimes(e^2+e^3)+\tfrac{1}{2}t^2(g-e^1\otimes e^1),\\
        H\circ H&=d_{12}t(e^1\odot(e^2+e^3))-d_{12}^2(e^2+e^3)\otimes(e^2+e^3)+t^2(g-e^1\otimes e^1),\\
        \nabla^g\varphi&=0.
    \end{align*}

    Hence, the first equation of \eqref{eq:(2,2)_EOM} implies that $d_{12}^2+2d_{22}t-t^2=0$. Solving for $d_{22}$ and $d_{32}$ in terms of $t\neq0$ and $d_{12}$ we obtain $d_{22}=\frac{1}{2t}(t^2-d_{12}^2)$ and $d_{32}=\frac{1}{2t}(t^2+d_{12}^2)$. Therefore we obtain the expressions from the statement.\medskip
    
    Consider the following change of basis: $$f_1\coloneqq e_1,\qquad f_2\coloneqq e_2-e_3,\qquad f_3\coloneqq\tfrac{1}{t}e_4,\qquad f_4\coloneqq d_{12}e_1+\tfrac{t^2-d_{12}^2}{2t}e_2+\tfrac{t^2+d_{12}^2}{2t}e_3.$$

    The non-zero Lie brackets in this basis are $[f_3,f_4]=f_4$. Hence, the Lie algebra $\mathfrak{g}$ is isomorphic to the direct product of the two-dimensional non-abelian Lie algebra of the group of affine motions of the real line with a two-dimensional abelian factor, i.e.\ $\R^2\oplus\mathfrak{aff}(\R)$.
\end{proof}

\begin{remark}
    The NS-NS supergravity system can also be studied within the framework of generalized pseudo-Riemannian geometry. In this setting, the system is equivalent to the vanishing of both the generalized Ricci curvature and the generalized scalar curvature. Left-invariant examples in dimension four have been studied in \cite{Cortes_Freibert_Galdeano2024}.
\end{remark}


\bibliographystyle{myamsplain}
\bibliography{biblio}

\end{document}

%% file: Preamble.tex
\usepackage{mlmodern}
\usepackage[top=1in,bottom=1in,left=1in,right=1in]{geometry}
\usepackage[utf8]{inputenc}
\usepackage[T1]{fontenc}
\usepackage[english,activeacute]{babel}
\usepackage{amsmath,amsthm,amssymb,amsfonts}
\usepackage{mathrsfs}
\usepackage{mathtools}
\usepackage{tikz-cd}
\usepackage{orcidlink}
\usepackage{bbm}

\usepackage{enumitem}
\setlist[itemize]{leftmargin=*,noitemsep}
\setlist[enumerate]{%
    leftmargin=*,
    noitemsep,
    label=\textnormal{(\alph*)},
    ref=\alph*
}

\usepackage{hyperref}
\hypersetup{linktocpage=true,colorlinks=true,citecolor=blue,urlcolor=blue}

\newtheorem{theorem}{Theorem}[section]
\newtheorem{proposition}[theorem]{Proposition}
\newtheorem{lemma}[theorem]{Lemma}
\newtheorem{corollary}[theorem]{Corollary}

\theoremstyle{definition}
\newtheorem{definition}[theorem]{Definition}

\theoremstyle{remark}
\newtheorem{remark}[theorem]{Remark}

\newtheorem*{acknowledgements}{Acknowledgements}

\usepackage[noadjust]{cite}

\makeatletter
\def\@defaultbiblabelstyle#1{[#1]}
\makeatother

\newcommand{\doi}[1]{\url{https://doi.org/#1}}

\newcommand{\N}{\mathbb{N}}
\newcommand{\Z}{\mathbb{Z}}

\newcommand{\R}{\mathbb{R}}

\newcommand{\Tr}{\mathrm{Tr}}
\newcommand{\rk}{\mathrm{rk}}
\newcommand{\Id}{\mathrm{Id}}
\newcommand{\Mat}{\mathrm{Mat}}

\newcommand{\End}{\mathrm{End}}

\renewcommand{\d}{\mathrm{d}}

\renewcommand{\Im}{\operatorname{Im}}

\DeclarePairedDelimiter{\escal}{\langle}{\rangle}

\allowdisplaybreaks